\documentclass[10pt,leqno,twoside]{amsart}

\usepackage{enumerate}

\usepackage{amssymb}

\usepackage[english]{babel}
\usepackage{amssymb,amsthm,amsmath,eucal,mathrsfs}
\usepackage{bm}

\setlength{\textwidth}{17cm}

\setlength{\textheight}{21.8cm}

\hoffset=-55pt

%
%

%%%%%%%%%%%%%%%%%%%%%%%%%%%%%%%%%%%%
%%%%%%%%%%%%%%%%%%%%%%%%%%%%%%%%%%%%
\usepackage{amsmath}
\usepackage{amsfonts}
\usepackage{amsmath}
\usepackage{amssymb}
\usepackage{graphicx}
\usepackage{lscape}
\usepackage{amstext}
\usepackage{amsthm}
\usepackage{color}
\usepackage{float}
\usepackage{mathrsfs}
\usepackage{epsfig}
\usepackage{url}
\usepackage{fancyhdr}
\usepackage{pspicture}
\usepackage{graphicx}
%\usepackage{multirow}
%\usepackage{lmodern}
%%%%%%%%%%%%%%%%%%%%%%%%%%%%%%%%%
%%%%%%%%%%%%%%%%%%%%%%%%%%%%%%%%%%%%
%%%%%%%%%%%%%%%%%%%%%%%%%%%%%%%%%%%%

%\usepackage[latin1]{inputenc}  

%\usepackage[T1]{fontenc}       

%\usepackage[ngerman]{babel}

%\usepackage{tikz, subfigure, xcolor} 

%\usepackage{pgfplots}

\usepackage{cite}

\usepackage{amsmath,verbatim}

\usepackage{amsthm}

\usepackage{amssymb}

\usepackage{amsfonts}

\usepackage{dsfont}

\usepackage{hyperref}

\newtheorem{theorem}{Theorem}[section]

\newtheorem{lemma}[theorem]{Lemma}

\newtheorem{proposition}[theorem]{Proposition}

\newtheorem*{Conditions}{Conditions}

\theoremstyle{definition}

\newtheorem{remark}[theorem]{Remark}

\numberwithin{equation}{section}

%Mathematische Symbole

      % diagonal

\newcommand{\dist}{\mathrm{dist}}      % distance

      % diameter

    % co-dimension

      % support

        %closure

 % Beweisende

       % span

 %Imagin????rteil nicht alsfraktur

 %Realteil nicht als fraktur

\renewcommand{\div}{\mathrm{div}\,}    %div anstatt geteilt

 %Norm

 % absolut value

%\providecommand{\au}[1]{\underline{a}} % length of the edges

 %differential

 %dx

 %dx

 %coefficient matrix

 %coefficient matrix

 %coefficient matrix

 %coefficient matrix

%mathematische Schriften

%%%%%%%%%%%%%%%%%%%%%%%%%%%%%%%%

%Kaliegraphie

%%%%%%%%%%%%%%%%%%%%%%%%%%%%%%%%

%Doppelt definieret symbole

\newcommand\restr[2]{{% we make the whole thing an ordinary symbol
  \left.\kern-\nulldelimiterspace % automatically resize the bar with \right
  #1 % the function
  \vphantom{\big|} % pretend it's a little taller at normal size
  \right|_{#2} % this is the delimiter
  }}

% David's macros

\usepackage{eucal}

%%%%%%%%%%%%%%%%%%%%%%%%%%%%%%%%%%%%%%%%%%%%%%%%%%%%%%

\title[Mathematical imaging using bubbles as contrast agents]{Mathematical Analysis of the acoustic imaging modality using bubbles as contrast agents at nearly resonating frequencies}

\author[Dabrowski, Ghandriche and Sini]{Alexander Dabrowski$^*$, Ahcene Ghandriche$^*$ and Mourad Sini$^{\ddag}$}
\thanks{$^*$ RICAM, Austrian Academy of Sciences, Altenbergerstrasse 69, A-4040, Linz, Austria. Email: alexander.dabrowski@ricam.oeaw.ac.at. This author is supported by the Austrian Science Fund (FWF): P 30756-NBL}
\thanks{$^*$ RICAM, Austrian Academy of Sciences, Altenbergerstrasse 69, A-4040, Linz, Austria. Email: ahcene.ghandriche@ricam.oeaw.ac.at. This author is supported by the Austrian Science Fund (FWF): P 30756-NBL}
\thanks{$^{\ddag}$ RICAM, Austrian Academy of Sciences, Altenbergerstrasse 69, A-4040, Linz, Austria. Email: mourad.sini@oeaw.ac.at. This author is partially supported by the Austrian Science Fund (FWF): P 30756-NBL}

\begin{document}

\date{}

\allowdisplaybreaks
%\begin{document}

\begin{abstract} 
 We analyze mathematically the acoustic imaging modality using bubbles as contrast agents. These bubbles are modeled by mass densities and bulk moduli enjoying contrasting scales. These contrasting scales allow them to resonate at certain incident frequencies. We consider two types of such contrasts.  In the first one, the bubbles are light with small bulk modulus, as compared to the ones of the background, so that they generate the Minnaert resonance (corresponding to a local surface wave). In the second one, the bubbles have moderate mass density but still with small bulk modulus so that they generate a sequence of resonances (corresponding to local body waves).
	
	We propose to use as measurements the far-fields collected before and after injecting a bubble, set at a given location point in the target domain, generated at a band of incident frequencies and at a fixed {\it{single backscattering direction}}. Then, we scan the target domain with such bubbles and collect the corresponding far-fields. The goal is to reconstruct both the, variable, mass density and bulk modulus of the background in the target region.
\bigskip

\begin{enumerate}	
\item	We show that, for each fixed used bubble, the contrasted far-fields reach their maximum value at, incident, frequencies close to the Minneart resonance (or the body-wave resonances depending on the types of bubbles we use). Hence, we can reconstruct this resonance from our data. The explicit dependence of the Minnaert resonance in terms of the background mass density of the background allows us to recover it, i.e. the mass density, in a straightforward way. 

\item In addition, this measured contrasted far-fields allow us to recover the total field at the location points of the bubbles (i.e. the total field in the absence of the bubbles). A numerical differentiation argument, for instance, allows us to recover the bulk modulus of the targeted region as well.
\end{enumerate}
\bigskip

	The body-wave resonances are independent on the background (in contrast of the Minnaert one), hence using them we can (only) recover the total fields, as in step $(2)$ above, and then do the reconstruction using that.
\end{abstract}

\subjclass[2010]{35R30, 35C20}
\keywords{Acoustic imaging, bubbly media, Minnaert resonance,  surface-wave resonances, Newtonian potential, body-wave resonances.}

\maketitle

\section{Introduction and statement of the results}

Diffusion by highly contrasted small particles is of fundamental importance in several branches of applied sciences, as for example in material sciences and imaging. In this work, we focus on the acoustic imaging modality using microscaled bubbles as contrast agents, see \cite{Abderson-al-2011, Qui-al-2009, Quaia-2007, Ilov-al-2018} for more details on related theoretical and experimental studies. We describe a modality using the contrasted scattered fields, by the targeted anomaly, measured before and after injecting microscaled bubbles. These bubbles are modeled by mass densities and bulk moduli enjoying contrasting scales. These contrasting scales allow them to resonate at certain incident frequencies. The main goal of this work is to analyze mathematically this contrasted scattered fields in terms of these scales with incident frequencies close to these resonances and derive explicit formulas linking the values of the unknown mass density and bulk modulus of the targeted region to the measured scattered fields.   

\bigskip

To describe properly the mathematical model we are dealing with in this work, let us denote by $D$ a small particle in $\mathbb{R}^{3}$ of the form $D := \varepsilon B +z $, where
 $B $ is an open, bounded, simply connected set in $\mathbb{R}^{3}$ with Lipschitz boundary, containing the origin, and $z$ specifies the location of the 
 particle. The parameter $\varepsilon > 0 $ characterize the smallness assumption on the particle. 
Let us consider a mass density (respectively, bulk modulus) that we note by $\rho_{\varepsilon}(\cdot)$ (respectively, $k_{\varepsilon}(\cdot)$) of the form 
%\begin{equation}
%\rho_{\varepsilon}(x)=\begin{cases}
%                 \rho_{0}(x), \ x \in \mathbb{R}^{3} \setminus D, \\
%                 \rho_{1}, \ x \in D, \end{cases}
%\label{model1}
%\end{equation}
%and a bulk modulus in the analogous form
%\begin{equation}
%k_{\varepsilon}(x)=\begin{cases}
%                k_{0}(x), \ x \in \mathbb{R}^{3} \setminus D,\\
%                k_{1}, \ x \in D,
%               \end{cases}
%\label{model2}
%\end{equation}
\begin{center} 
  \begin{minipage}[b]{0.4\textwidth}
    \begin{equation*}
        \rho_{\varepsilon}(x):=\begin{cases}
        \rho_{0}(x), \ x \in \mathbb{R}^{3} \setminus D,\\ 
        \rho_{1}, \quad \; \ x \in D, \end{cases}
    \end{equation*}
  \end{minipage}
  %\hfill
  \begin{minipage}[b]{0.5\textwidth}
    \begin{equation*}
                k_{\varepsilon}(x):=\begin{cases}
                k_{0}(x), \ x \in \mathbb{R}^{3} \setminus D,\\
                k_{1}, \quad \; \ x  \in D,
               \end{cases}
    \end{equation*} 
\end{minipage}
\end{center}
where $\rho_{1}$ and $k_{1}$ are positive constants, while $\rho_{0}$ and $k_{0}$ are smooth enough functions which are constant outside of a bounded and smooth domain $\Omega$.
We denote respectively $\bar{\rho}_0$ and $\bar{k}_0$ to be the values of $\rho_0$ and $k_0$ outside $\Omega$. 
Thus $\rho_{0}$ and $k_{0}$ denote the density and bulk modulus of the 
background medium, and $\rho_{1}$ and $k_{1}$ denote the density and bulk modulus of the bubble respectively.\\
We are interested in the following problem describing the acoustic scattering by a bubble, see \cite{Papa6} and \cite{Papa7}, given by the system
\begin{equation}\label{eq:acoustic_scattering}
\begin{cases}   \nabla \cdot \left(\dfrac{1}{\rho_{0}} \nabla u \right) +\dfrac{\omega^{2} }{k_{0}} u =0 \ \text{in} \ \mathbb{R}^{3} \setminus D,\\[10pt]
   \nabla \cdot \left(\dfrac{1}{\rho_{1}} \nabla u \right)  + \dfrac{\omega^{2} }{k_{1}} u =0 \ \text{in}\ D ,\\
   \left.u \right\vert_{-}-\left.u \right\vert_{+}=0, \ \text{on}\ \partial D,\\[4pt]
   \dfrac{1}{\rho_{1}}\left.\dfrac{\partial u}{\partial \nu} \right\vert_{-}-\dfrac{1}{\rho_{0}} \left.\dfrac{\partial u}{\partial \nu} \right\vert_{+} =0\ \text{on} \ \partial D,
   \end{cases}
\end{equation}
where $\omega > 0$ is a given frequency and $\nu $ denotes the external unit normal to $\partial D $. Here the total field is $u:=u^{i}+u^{s}$, where $u^{i}$ denotes the incident field (we restrict to plane incident waves) and $u^{s}$ denotes the scattered waves and satisfy the following condition 

\begin{equation}\label{SRC}
\frac{\partial u^{s}}{\partial |x|}-i\kappa u^{s}=o\left(\frac{1}{|x|}\right), |x|\rightarrow\infty, ~(\text{S.R.C}).
\end{equation}

We introduce the notation $\kappa_{0}^{2}:=\omega^{2} \rho_0/k_0$ and $\kappa_{1}^{2}:=\omega^{2} \rho_1 / k_1$. The problem $(\ref{eq:acoustic_scattering})$ is well posed, see \cite{AMMARI20192104, ACCS-effective-media} and \cite{az}. In addition, the scattered field $u^s $ can be expanded as 
\begin{equation*}
u^{s}(x,\theta)= \frac{e^{i \kappa_{0} \vert x \vert}}{\vert x \vert} u^{\infty}(\hat{x}, \theta)  + \mathcal{O}\left(\vert x \vert^{-2}\right), \;\; |x| \to + \infty,
\end{equation*}
where $\hat{x}:=x / |x| $ and $u^{\infty}(\hat{x}, \theta) $ denotes the far-field pattern corresponding to the unit vectors $\hat{x},\theta $, i.e. the incident and propagation directions respectively. 
We are interested in the regimes where the coefficients satisfy the conditions: 
%\begin{equation}
% \frac{\rho_1}{\rho_0}= C_{\rho} \varepsilon^s,\ s\geq 0,
% \label{constant-rho}
% \end{equation}
% and
% \begin{equation}
% \frac{k_1}{k_0}= C_{k} \varepsilon^t,\ t \geq 0,
% \label{constant-k}
% \end{equation}
\begin{equation*}
 \frac{\rho_1}{\rho_0}= C_{\rho} \varepsilon^s,\ s\geq 0 \;\; \; \text{and} \;\;\; \frac{k_1}{k_0}= C_{k} \varepsilon^t,\ t \geq 0,
\end{equation*}
with positive and smooth functions $C_{\rho}$ and $C_{k}$ which are independent from $\varepsilon$, and real numbers $s$, $t$ assumed to be non negative. The scattering problem described above models the acoustic wave diffracted in the presence of small bubbles. In this case, the parameters $s$ and $t$ fix the kind of medium we are considering, see \cite{Papa7, Papa6, Papa11} and \cite{az}. We are interested in the following two regimes:

\begin{enumerate}
 \item Moderate speed of propagation. In this case, we assume that $s=t$, then the relative speed of propagation is uniformly bounded, i.e. 
 \begin{equation*}
 \frac{\kappa^2_{1}}{\kappa^2_0} = \frac{\rho_{1}k_{0}}{k_{1} \rho_{0}}=\frac{\rho_{1}}{\rho_{0}}\frac{k_{0}}{k_{1}} \simeq 1, \mbox{ as } \varepsilon \ll 1.
 \end{equation*}
 
 \item High speed of propagation. In this case, we assume that $s < t $, then the relative speed of propagation is high, i.e. 
 \begin{equation*}
 \frac{\kappa^2_{1}}{\kappa^2_0} \simeq \varepsilon^{s-t}, \mbox{ as } \varepsilon \ll 1.
 \end{equation*}
\end{enumerate}

There is a major difference between these two regimes. To highlight it, let us for the moment assume that $\rho_0$ is constant everywhere in $\mathbb{R}^3$. 
In this case, the above problem can be equivalently formulated as 
\begin{equation*}
 \begin{cases}
  \Delta u + \kappa_{0}^{2} u =0 \ \text{in} \ \mathbb{R}^{3}\setminus D, \\
   \Delta u + \kappa_{1}^{2} u =0 \ \text{in}\ D,\\
   \left.u \right\vert_{-}-\left.u \right\vert_{+}=0, \ \text{on} \ \partial D ,\\[4pt]
   \dfrac{1}{\rho_{1}}\left.\dfrac{\partial u}{\partial \nu} \right\vert_{-}-\dfrac{1}{\rho_{0}} \left.\dfrac{\partial u}{\partial \nu} \right\vert_{+} =0\ \text{on} \ \partial D,\\
   u -u^i\text{ satisfies the SRC.}
 \end{cases}
\end{equation*} 

As we can see, the contrasts of the medium appear in the transmission conditions through the coefficient $1/\rho_1$ (or equivalently $\rho_0 / \rho_1$), and
through the speed of propagation, namely $\rho_0/k_0$ and $\rho_1/k_1$. Based on the Lippmann-Schwinger equation representation of the total fields, the second contrast appears 
on the (volumetric) Newtonian potential while the first one appears on the (surface) Neumann-Poincar\'e's operator. 
Precisely, the values of field $u$ outside the bubble $D$ is fully computable from the knowledge of $u(x), x\in D$ and $\partial_{\nu}u(x), x\in \partial D$. These last quantities are solutions of the following system of integral equation
\begin{equation}\label{eqNS}
  u(x) -  \gamma \omega^2  \int_D G_\omega(x-y)u(y) dy + \alpha \int_{\partial D} G_\omega (x-y) \, \partial_{\nu}u(y) \, d\sigma(y) = u^i(x), \;~~ \mbox{ on } D
\end{equation}
and
\begin{equation} \label{eqNDL}
\alpha \left(\frac{1}{\alpha} + \frac{\rho_0}{2} + ( K_D^{\omega})^* \right) \left[ \partial_{\nu} u \right] - \gamma \omega^2   \partial_{\nu-} \int_D G_\omega(x-y)u(y) dy   = \partial_{\nu} u^{i} \;~~ \mbox{ on } \partial D
\end{equation}
where $u^{i}$ is the incident field such that 
\begin{equation}\label{uincequa}
\div\left( \frac{1}{\rho_{0}} \nabla u^{i} \right)(x) + \frac{\omega^{2}}{k_{0}} u^{i}(x) = 0, \; \quad x \in \; D. 
\end{equation}
and we have adapted the succeeding notations $\gamma = \beta - \alpha \rho_1/k_1$ and $\alpha := 1/\rho_1 - 1/\rho_0$ with $ \beta := 1/k_1 - 1/k_0$.\\
Here, $G_\omega$ stands for the Green's functions related to (\ref{uincequa}) with the radiation conditions at infinity. In addition, $K_D^{\omega}$ is the double layer (or the Neumann-Poincar\'e) operator defined on the boundary of $D$ at the frequency $\omega$. Depending on the scales of the contrasts, we make the following observations.
\begin{enumerate}
 \item In the first regime, i.e $s=t$, we have $\gamma \sim 1$, as $\varepsilon <<1$, and the Newtonian potential is negligible as it scales as $\varepsilon^2$ as $\varepsilon \ll 1$. However, if $s=t=2$ then the contrasts on the mass densities, i.e. $1/\alpha$, can approximate the spectrum of the Neumann-Poincar\'e  operator $H_{D}^{0}$. For smooth domain $D$, this operator defined on $L^2(\partial D)$ has a sequence of real eigenvalues accumulating at $0$ in addition to the value $\frac{1}{2}$.  
 As the contrast is real, then we can only approximate the highest eigenvalue, which is $\frac{1}{2}$. This can be done for in this regime as $\alpha \sim \varepsilon^{-2}$. The frequency $\omega$ for which this is possible is the Minnaert resonance (corresponding to a surface wave type). 
 
 \item In the second regime, if $s<t$, the  high contrasts of the speed of propagation allow the Newtonian operator to dominate the Neumann-Poincar\'e operator.
 In addition, if we take $t-s=2$, then the contrast of the speed of propagation, $\gamma \sim \varepsilon^{-2}$, will balance the scale of the Newtonian operator and we might excite its eigenvalues. There is a discrete sequence of such eigenvalues (corresponding to local body waves type). 
\end{enumerate}

 Microbubbles with scales fitting into the first regime are well known to exist in the nature. However, those related to the second regime, with high speed of propagation, are less known. Nevertheless, there are possibilities to artificially produce them, see the discussion in \cite{Z-F-2018} and also in \cite{S-al-2019}. 

	A first key observation in our analysis, which happens to be useful for the imaging later on, is that the Minnaert resonance is characterized by the bulk modulus of the bubble and the surrounding local mass density of the background. The sequence of body-wave resonances are characterized solely by the mass density and the bulk of the bubble. In addition, we show that the contrasted scattered fields reach their maximum values at, incident, frequencies close to the Minneart resonance (or the body-wave resonances depending on the types of bubbles we use). This allows us to recover these resonances by measuring the contrasted scattered waves at a band of incident frequencies but at a fixed single backscattering direction. A second key observation is that this measured contrasted scattered waves allows us to recover the total field at the location point of the bubble. Scanning the targeted region with such bubbles, we can recover the total field there up to a sign (i.e. the total field in the absence of the bubbles). 
	\bigskip
	
	Based on these observations, we can reconstruct the density and the bulk modulus of the targeted region from
	the contrasted scattered waves (before and after injecting the bubbles) at a band of incident frequencies but at a fixed single backscattering direction. More details are given in section \ref{imaging-section}. Nevertheless, let us say it in short here that these contrasted scattered waves encodes the Minnaert resonance in its denominator and the total field in its numerator. From the first one, we extract the mass density while from the second one we derive the bulk modulus of the targeted region. 
	
\bigskip

The following theorems translate these observations with more clear statements. We state the following conditions which are common to both the two results.

\bigskip
\begin{Conditions}
Let $\Omega$ be a bounded domain of diameter $diam(\Omega)$ of order $1$. Let also $\rho_0$ and $k_0$ be two functions of class $C^1$ and are constant outside $\Omega$. They are assumed to be positive functions. Let $D:=z +\varepsilon \, B$ be a small and Lipschitz smooth domain where $z \in \Omega$ away from its boundary. The relative diameter of $D$ is small as compared to the diameter of $\Omega$, i.e. $\dfrac{\varepsilon}{diam(\Omega)}<<1$.
The functions $\rho_0$ and $k_0$ are assumed to be independent on the parameter $\varepsilon$.
\end{Conditions}

\begin{theorem}\label{Theorem-Using-Minnear-Resonance}
Let the above {\bf{Conditions}} be satisfied. In addition, let $\rho_1$ and $k_1$ be constants enjoying the following scales  
\begin{equation*}
\rho_1=\overline{\rho}_{1}\;\varepsilon^{2},\quad k_1 =\overline{k}_{1}\, \varepsilon^{2} \quad \mbox{ and } \quad \frac{k_1}{\rho_1}\sim 1, \,~~ \text{as} \;\; \varepsilon <<1
\end{equation*}
and $\overline{\rho}_{1}$ is large enough such that $\underset{x \in \Omega}{\max} \, \rho_{0}(x) < \overline{\rho}_{1}$.

The solution of the corresponding problem (\ref{eq:acoustic_scattering}), has the following expansions. 
\begin{enumerate}
\item The scattered field is approximated as
\begin{equation}\label{scat-first-regime}
 u^{s}(\cdot,\theta ,\omega) = v^{s}(x,\theta,\omega)- 
\frac{\omega^{2} \, \omega^{2}_{M}}{\overline{k}_{1}(\omega^{2}-\omega^{2}_{M})} \vert B \vert \; \varepsilon \; G_{\omega}(x-z) \, v(z,\theta,\omega) + \mathcal{O}\left( \frac{\varepsilon^{2}}{\left( \omega^2-\omega^2_{M} \right)^{2}}\right)
\end{equation}
uniformly for $x$ in a bounded domain away from $D$ and $\theta$ in the unit sphere.
\item The farfield is approximated as
\begin{equation}\label{farfield-first-regime}
 u^{\infty}(\hat{x},  \theta, \omega) =v^{\infty}(\hat{x}, \theta, \omega)- \frac{\omega^{2}_{M}}{ \overline{k}_{1} (\omega^{2} - \omega^{2}_{M})} \vert B \vert \; \varepsilon \; v(z, -\hat{x}, \omega)\; v(z, \theta, \omega) + \mathcal{O}\left( \frac{\varepsilon^{2}}{\left( \omega^2-\omega^2_{M} \right)^{2}}\right),
\end{equation}
uniformly for $\theta$ and $\hat{x}$ in the unit sphere.
\end{enumerate}
 These expansions are valid under the condition that $\varepsilon / (\omega^2 - \omega_M^2)$ small enough. Here, we have 
\begin{equation}\label{Minnaer-3D}
\omega_M:=\omega_M(z):=\sqrt{\frac{8 \pi\, \overline{k}_{1} }{\rho_0(z) \; \mu_{\partial B}}},  \quad \text{with} \quad \mu_{\partial B} := \frac{1}{\vert \partial B \vert} \;  \int_{\partial B} \; \int_{\partial B}  \frac{(x-y) \cdot \nu(x)}{ |x-y| } \; dx \; dy,
\end{equation}
called the Minnaert frequency\footnote{Remark that $\mu_{\partial B}$ depends only on the shape of the domain $B$.}. In both the expansions $v:=v(x, \theta, \omega)$ and $v^\infty:=v^\infty(\hat{x}, \theta, \omega)$ is the total field, and its farfield, solution of the problem (\ref{eq:acoustic_scattering}) in the absence of the bubble $D$.
\end{theorem}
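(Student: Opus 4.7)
The starting point is the coupled Lippmann--Schwinger system $(\ref{eqNS})$--$(\ref{eqNDL})$ for the unknowns $u|_D$ and $\partial_\nu u|_{\partial D}$. Under $\rho_1 = \overline{\rho}_1\varepsilon^2$ and $k_1 = \overline{k}_1\varepsilon^2$ one has $\alpha = (\overline{\rho}_1\varepsilon^2)^{-1} + \mathcal{O}(1)$ and $\gamma = \mathcal{O}(\varepsilon^{-2})$, with the key algebraic identity $\gamma\omega^2 + \alpha\kappa_1^2 = \omega^2\beta \simeq \omega^2/(\overline{k}_1\varepsilon^2)$, while $\kappa_1^2 \simeq 1$. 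After the rescaling $x = z + \varepsilon\xi$, the Newtonian volume potential on $D$ contributes only at order $\varepsilon^2$, whereas the single-layer and Neumann--Poincar\'e operators retain their static leading parts on the reference domain $B$ plus $\mathcal{O}(\varepsilon)$ frequency corrections. These scales couple the surface and volume equations in exactly the way required to produce the Minnaert resonance.

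\textbf{Step 1: scalar reduction.} I test $(\ref{eqNDL})$ against the constant function on $\partial D$. The divergence theorem applied to the interior equation in $(\ref{eq:acoustic_scattering})$ gives $\int_{\partial D}\partial_\nu u\,d\sigma = -\kappa_1^2\int_D u\,dx$, linking the surface and volume unknowns. The adjoint relation $\int_{\partial D}(K_D^\omega)^*[\sigma]\,d\tau = \int_{\partial D} K_D^\omega[1]\,\sigma\,d\tau$ together with the small-frequency expansion
\[
K_D^\omega[1](x) = \tfrac{1}{2} + \tfrac{\omega^2}{8\pi}\!\int_{\partial D}\tfrac{(x-y)\cdot\nu(y)}{|x-y|}\,d\sigma(y) + \mathcal{O}(\omega^4)
\]
contributes, after rescaling to $\partial B$, exactly the constant $\mu_{\partial B}$. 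Approximating $u \approx \overline{u}_D := |D|^{-1}\int_D u$ on $D$ and collecting all $\mathcal{O}(1)$ terms delivers a scalar equation of the form
\[
\bigl(\omega^2 - \omega_M^2(z)\bigr)\,\overline{u}_D \;=\; \omega_M^2(z)\,v(z,\theta,\omega) + \mathcal{O}(\varepsilon),
\]
with $\omega_M$ as in $(\ref{Minnaer-3D})$. The right-hand side features $v(z,\theta,\omega)$ rather than $u^i(z)$ because the background-scattered field must be absorbed into the effective source seen by the bubble. Solving yields $\overline{u}_D = \omega_M^2\,v(z,\theta,\omega)/(\omega^2-\omega_M^2) + \mathcal{O}(\varepsilon/(\omega^2-\omega_M^2))$.

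\textbf{Step 2: scattered field and farfield.} Green's representation for $u^s - v^s$ outside $D$ reads
\[
u^s(x) - v^s(x) = \gamma\omega^2\!\int_D\! G_\omega(x-y)\,u(y)\,dy - \alpha\!\int_{\partial D}\! G_\omega(x-y)\,\partial_\nu u(y)\,d\sigma(y) + \text{corrections},
\]
and the two dominant integrals merge, via $\int_{\partial D}\partial_\nu u = -\kappa_1^2\int_D u$ and $\gamma\omega^2 + \alpha\kappa_1^2 \simeq \omega^2/(\overline{k}_1\varepsilon^2)$, into a single volume term. Freezing $G_\omega(x-y) \approx G_\omega(x-z)$ for $y\in D$ and inserting $\overline{u}_D$ from Step 1 yields $(\ref{scat-first-regime})$. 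The farfield formula $(\ref{farfield-first-regime})$ then follows from the behavior of $G_\omega(x-z)$ as $|x|\to\infty$, which by mixed reciprocity for the background Green function identifies the outgoing amplitude with $v(z,-\hat{x},\omega)$.

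\textbf{Main obstacle.} The delicate point is the uniform control of the error when $\omega^2 - \omega_M^2$ is small. The inversion in Step 1 behaves like a Neumann series in the parameter $\varepsilon/(\omega^2 - \omega_M^2)$, so every subleading contribution coming from $(K_D^\omega)^*$, from the Newtonian potential, and from the Taylor expansions of $v$, $\rho_0$ and $G_\omega$ around $z$ must be tracked and shown to be at most $\mathcal{O}(\varepsilon/(\omega^2-\omega_M^2))$ in the norm driving the fixed-point argument. The hypothesis $\overline{\rho}_1 > \max_{\Omega}\rho_0$ enters here to ensure uniform invertibility of the relevant static operator on the mean-zero subspace of $L^2(\partial B)$, so that the ``capacitive'' scalar mode responsible for the Minnaert denominator decouples cleanly from the orthogonal complement (which stays regular in $\omega$); only after this decoupling does the $\mathcal{O}(\varepsilon^2/(\omega^2-\omega_M^2)^2)$ remainder claimed in $(\ref{scat-first-regime})$--$(\ref{farfield-first-regime})$ close up.
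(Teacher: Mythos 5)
Your strategy coincides, step for step, with the paper's own proof: the same Lippmann--Schwinger system (\ref{eqNS})--(\ref{eqNDL}), the scalar reduction obtained by integrating the boundary equation over $\partial D$ (i.e.\ testing against constants) combined with the identity $\int_{\partial D}\partial_\nu u\,d\sigma=-\kappa_1^2\int_D u\,dx$ of (\ref{intDint}), the identification of $\omega_M$ from the $\omega^2$-correction of the Neumann--Poincar\'e operator acting on constants, the merging of the volume and surface contributions through $\gamma\omega^2+\alpha\kappa_1^2=\omega^2\beta$, and the passage to the far field by mixed reciprocity; this is exactly the route of Sections 3.1--3.2.

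Two caveats. First, a sign slip: with your normalization the scalar equation must read $(\omega^2-\omega_M^2)\,\overline{u}_D=-\,\omega_M^2\,v(z,\theta,\omega)+\mathcal{O}(\varepsilon)$; with the $+$ sign you wrote, inserting $\overline{u}_D$ into $u^s-v^s\approx\omega^2\beta\,G_\omega(x-z)\,|D|\,\overline{u}_D$ gives the dominant term of (\ref{scat-first-regime}) with the wrong sign (also keep track of the factor $\rho_0$ hidden in the paper's Green function, so that $K_D^0[1]=-\rho_0/2$). Second, and more substantively, what you call the ``main obstacle'' is precisely the analytic content of the paper's proof and is only asserted in your sketch: the bounds $\Vert\partial_\nu u\Vert_{L^2(\partial D)}=\mathcal{O}\left(\varepsilon^2/(\omega^2-\omega_M^2)\right)$ and $\Vert u\Vert_{H^1(D)}=\mathcal{O}\left(\varepsilon^{1/2}/(\omega^2-\omega_M^2)\right)$ of Propositions \ref{prop:uAsympExp} and \ref{prop:uAsympExp:VarCoeff} are proved by a bootstrap which splits $\partial_\nu u$ into its mean and mean-zero parts, re-uses the resonant scalar relation to control $\int_{\partial D}\partial_\nu N_D^\omega(u)$, and closes only under the smallness of $\varepsilon/(\omega^2-\omega_M^2)$; in the heterogeneous case one must additionally absorb the extra volume term $\int_D G_\omega\,\nabla\rho_0^{-1}\cdot\nabla u$ (this is why $H^1(D)$ control of $u$ is needed) and invert the operator $B$ of Lemma \ref{lemma-B} on all of $L^2(\partial D)$ --- it is this inversion, not a decoupling on the mean-zero subspace, where the hypothesis $\max_{\Omega}\rho_0<\overline{\rho}_{1}$ is actually used. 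Without these estimates the remainder $\mathcal{O}\left(\varepsilon^{2}/(\omega^2-\omega_M^2)^{2}\right)$ in (\ref{scat-first-regime})--(\ref{farfield-first-regime}) is not justified, so your proposal reproduces the paper's argument in outline but defers its essential step.
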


The first mathematical study of the Minnaert resonance was shown in \cite{Habib-Minnaert} where it was estimated for bubbles injected in a homogeneous background. Later on, a series of works were devoted to its implications in different areas, see \cite{az, AMMARI20192104, ACCS-effective-media, A-F-L-Y-Z, H-F-G-L-Z-1}. The approximations in (\ref{scat-first-regime}) and (\ref{farfield-first-regime}) are extensions of those in \cite{Habib-Minnaert} and \cite{AMMARI20192104} to the case when the background is heterogeneous (with variable mass density and bulk modulus). The surprising fact is that this resonance depends also on the surrounding background through its mass density. 
\bigskip

To state the results related to the second regime, we first introduce with more details the Newtonian operator $N^0: L^{2}(B)\rightarrow L^{2}(B)$ such that $N^{0}(u)(x):=\int_{B}G_0(x-y)u(y)dy$. This operator is positive, compact and selfadjoint. Let $(\lambda^B_n, e^B_n)_{n \in \mathbb{N}}$ be its sequence of eigenvalues with the corresponding eigenfunctions.

%\begin{remark}
%In the next theorem we suppose there exist $n_{0} \in \mathbb{N}$ such that 
%\begin{equation}\label{cdteigfct}
%\int_{B} e^{B}_{n_{0}} =  = \neq 0 
%\end{equation}
%The condition $(\ref{cdteigfct})$ is fulfilled in some cases like a ball see \cite{AhceneMourad}.  
%\end{remark}

\begin{theorem}\label{Theorem-Using-Newtonian-Resonances}
Let the above {\bf{Conditions}} be satisfied. In addition, let $\rho_1$ and $k_1$ be constants enjoying the following scales  
\begin{equation}\label{condition-rho_0}
\rho_1=\rho_0(z)+ \mathcal{O}( \varepsilon^{j}),\quad  j>0, \quad \text{and} \quad  k_1 =\overline{k}_{1}\, \varepsilon^{2} \,~~ \mbox{ as } \varepsilon <<1.
\end{equation}
In this regime, the solution of the problem (\ref{eq:acoustic_scattering}), has the following expansions.
\begin{enumerate}
\item The scattered field has the approximation 
\begin{equation}\label{Approximation-second-regime-scattered-field}
u^{s}(x, \theta, \omega) = v^s(x, \theta, \omega)-\frac{1}{\overline{k}_{1}} \; \frac{\omega^{2} \omega_{n_0}^{2}}{(\omega^2 -  \omega_{n_0}^{2})} \displaystyle\Big( \int_{B} e^B_{n_{0}} \Big)^{2}\, \varepsilon\, G_{\omega}(x;z) \; v(z, \theta, \omega) + \mathcal{O}\left(\varepsilon\; +\;\frac{\varepsilon^{1+\min(1;j)}}{\left( \omega^{2} - \omega_{n_{0}}^{2} \right)^{2}} \right),
\end{equation}
uniformly for $x$ in a bounded domain away from $D$ and $\theta$ in the unit sphere.
\item The farfield has the approximation
\begin{equation}\label{Approximation-second-regime-farfield}
u^{\infty}(x, \theta, \omega) = v^\infty(x, \theta, \omega)-\frac{1}{\overline{k}_{1}} \; \frac{\omega^{2} \omega_{n_0}^{2}}{(\omega^2 -  \omega_{n_0}^{2})} \displaystyle\Big( \int_{B} e^B_{n_{0}} \Big)^{2}\, \varepsilon\, v(z, -\hat{x}, \omega) \; v(z, \theta, \omega) + \mathcal{O}\left(\varepsilon\; +\; \frac{\varepsilon^{1+\min(1;j)}}{\left( \omega^{2} - \omega_{n_{0}}^{2}\right)^{2}} \right),
\end{equation}
uniformly for $\theta$ and $\hat{x}$ in the unit sphere.
\end{enumerate}
These expansions are valid as soon as $\dfrac{\varepsilon^h}{\omega^2-\omega^2_{n_0}}=\mathcal{O}\left( 1 \right)$, with $h<\min\{1, j\}$, as $\varepsilon <<1$, where 
\begin{equation}\label{resonance-second-regime}
\omega_{n_0}:=\sqrt{\frac{\overline{k}_{1}}{\overline{\rho}_1 \lambda^B_{n_0}}}.
\end{equation}
Observe that $\Big( \int_{B} e^B_{n_{0}} \Big)^{2}$ means $ \underset{l}{\sum}\Big( \int_{B} e^B_{l} \Big)^{2}$, where $l$ such that $N^{0}e^B_{l}=\lambda^B_{n_0}\, e^B_{l}$.

Here again $v:=v(x,\theta,\omega)$ and $v^{\infty}:=v^{\infty}(\hat{x},\theta,\omega)$ is the total field, and its farfield, solution of the problem $(\ref{eq:acoustic_scattering})$ in the absence of the bubble $D$.  
\end{theorem}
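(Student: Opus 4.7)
The plan is to start from the coupled Lippmann--Schwinger system $(\ref{eqNS})$--$(\ref{eqNDL})$ and exploit the scale dichotomy produced by the present regime. A direct computation gives $\gamma = \rho_1/(k_1\rho_0) - 1/k_0$, whose leading term inside $D$ is $1/(\overline{k}_1\varepsilon^{2})$, whereas $\alpha = 1/\rho_1 - 1/\rho_0 = \mathcal{O}(\varepsilon^{\min\{1,j\}})$. Hence the volumetric Newtonian operator in $(\ref{eqNS})$ dominates the surface Neumann--Poincar\'e contribution. Rescaling $x = z + \varepsilon\xi$, $y = z+ \varepsilon\eta$ and using the short distance expansion of the background Green function $G_\omega(x,y) = \rho_0(y)/(4\pi|x-y|) + \mathcal{O}(1)$, the term $\gamma\omega^2\int_D G_\omega(x-y)u(y)\,dy$ becomes $(\omega^2\rho_0(z)/\overline{k}_1)(N^0 U)(\xi)$ up to a remainder of order $\varepsilon^{\min\{1,j\}}$, where $U(\xi) := u(z+\varepsilon\xi)$ and $N^0$ is the free Newtonian on $B$. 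The surface term in $(\ref{eqNS})$ and the boundary system $(\ref{eqNDL})$ contribute only to the remainder, since $\alpha$ is small.

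After rescaling, the interior equation reads $(I - T(\omega))U = v(z,\theta,\omega) + \mathcal{O}(\varepsilon)$ on $L^{2}(B)$, with $T(\omega) := (\omega^2\rho_0(z)/\overline{k}_1)N^0$; replacing $u^i(z)$ by $v(z,\theta,\omega)$ uses that the background total field $v$ is smooth across $z$ and coincides with the effective incident field for the bubble modulo $\mathcal{O}(\varepsilon)$ on $D$. Since $N^0$ is positive, compact and self-adjoint on $L^{2}(B)$, I would invert $I - T(\omega)$ through its spectral resolution $\{(\lambda_n^B, e_n^B)\}$. The eigenvalue condition $(\omega^2\rho_0(z)/\overline{k}_1)\lambda^B_{n_0} = 1$ is exactly $\omega^{2} = \omega_{n_0}^{2}$, and writing $1-\omega^2/\omega_{n_0}^{2} = -(\omega^2-\omega_{n_0}^{2})/\omega_{n_0}^{2}$ allows to single out the singular mode
\begin{equation*}
U(\xi) \;=\; -\,\frac{\omega_{n_0}^{2}\,v(z,\theta,\omega)\,\int_B e^B_{n_0}}{\omega^{2}-\omega_{n_0}^{2}}\,e^B_{n_0}(\xi) \;+\; \mathcal{O}(1) \;+\; \mathcal{O}\!\left(\frac{\varepsilon^{\min\{1,j\}}}{\omega^{2}-\omega_{n_0}^{2}}\right),
\end{equation*}
where the $\mathcal{O}(1)$ term gathers the non-resonant eigenmodes. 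In the degenerate case, the rank-one projector is replaced by the projector onto the eigenspace, producing the sum $\sum_l(\int_B e^B_l)^{2}$ stated in the theorem.

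To pass from $U$ to the scattered field, I would substitute back into the exterior representation
\begin{equation*}
u^{s}(x) - v^{s}(x) \;=\; \gamma\omega^{2}\!\int_D G_\omega(x-y)\,u(y)\,dy \;-\; \alpha\!\int_{\partial D} G_\omega(x-y)\,\partial_\nu u(y)\,d\sigma(y),
\end{equation*}
Taylor expand $G_\omega(x-y) = G_\omega(x-z) + \mathcal{O}(\varepsilon)$ for $y\in D$ and $x$ bounded away from $D$, and use $\int_D u\,dy = \varepsilon^{3}\int_B U\,d\eta$. The resonant monopole part of $U$ then produces the principal term of $(\ref{Approximation-second-regime-scattered-field})$, while the surface term only feeds the $\mathcal{O}(\varepsilon)$ remainder. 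For the far field $(\ref{Approximation-second-regime-farfield})$, I would use the reciprocity-based asymptotic $G_\omega(x,z) = v(z,-\hat{x},\omega)\,e^{i\kappa_0|x|}/|x| + \mathcal{O}(|x|^{-2})$.

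The main obstacle is the simultaneous control of the smallness parameter $\varepsilon$ and the resonant denominator $\omega^{2}-\omega_{n_0}^{2}$. Since the error in the rescaled interior equation is of order $\varepsilon^{\min\{1,j\}}$, inverting $I - T(\omega)$ amplifies it by the operator norm of $(I-T(\omega))^{-1}$, which scales as $(\omega^{2}-\omega_{n_0}^{2})^{-1}$. Balancing these two effects is what forces the validity condition $\varepsilon^{h}/(\omega^{2}-\omega_{n_0}^{2}) = \mathcal{O}(1)$ with $h<\min\{1,j\}$, and obtaining the exact remainder $\mathcal{O}\!\left(\varepsilon + \varepsilon^{1+\min\{1,j\}}/(\omega^{2}-\omega_{n_0}^{2})^{2}\right)$ requires pushing one order beyond the leading expansions of both the Green function and the integral equation on $B$, and combining the two sources of error with a Neumann-series argument on the orthogonal complement of the resonant eigenspace.
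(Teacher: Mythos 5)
Your proposal follows essentially the same route as the paper's proof: starting from the system (\ref{eqNS})--(\ref{eqNDL}), treating the surface term as an $\alpha$-small perturbation controlled through the boundary equation, diagonalizing the Newtonian potential to isolate the resonant eigenspace (the paper does this on $D$ via the auxiliary function $W=(I-\gamma\omega^2 N^0)^{-1}(1)$ and the quantities $\int_D u$, $\int_D W$, rather than on the rescaled domain $B$ as you do), then substituting into the exterior representation and using mixed reciprocity for the far field, with exactly the same balancing mechanism leading to the condition $h<\min\{1,j\}$. The only difference is presentational (rescaled spectral inversion of $I-T(\omega)$ versus the paper's a priori estimates on $\Vert u\Vert_{L^2(D)}$, $\Vert\partial_\nu u\Vert_{L^2(\partial D)}$ followed by the $W$-computation), so the outline is correct and matches the paper.
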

\bigskip

The body-wave resonances have been characterized already in \cite{A.D-F-M-S, M-M-S} in the framework of dielectric nanoparticles, in the scalar model related to the TM regime of the electromagnetic scattering, with a homogeneous background.
There, the contrast comes from the dielectric nanoparticles with high permittivity and moderate permeability. In our context, the contrast comes from the fact that the density of the bubble is moderate while its bulk is still small. At the mathematical level, our formulas in \ref{Approximation-second-regime-scattered-field} extend those in \cite{A.D-F-M-S} to the case of the acoustic model, i.e. a divergence form model, with heterogeneous background. As we have said above, such bubble's contrasts might not be available in nature but can be artificially designed, see \cite{Z-F-2018}.    
\bigskip

We finish this section with the following observations.
\begin{enumerate}

\item The Minnaert resonance $\omega_M$ of the bubble, located at $z$, depends on the bubble itself through its scaled bulk modulus $\overline{\rho}_1$, but most importantly on the surrounding background through its mass density $\rho_0(z)$. This is not the case with the body resonances $\omega_n, n\in \mathbb{N}$, which are fully characterized by the bubble itself through its scaled mass density and bulk modulus, compare (\ref{Minnaer-3D}) and (\ref{resonance-second-regime}). But this might be less surprising keeping in mind that the Minnaert resonance is related to the surface double layer operator, which is contrasted at the higher order of the divergence form partial differential equation, while the body-wave resonances are related to the volumetric Newtonian potential operator which contrasts at the lower order.   

\item The approximations in Theorem \ref{Theorem-Using-Minnear-Resonance} are similar to the ones in Theorem \ref{Theorem-Using-Newtonian-Resonances} up to the multiplicative factor appearing in the dominating term. The additional term $\mathcal{O}\left(\varepsilon \right)$ appearing in the error of the approximations (\ref{Approximation-second-regime-scattered-field}) and (\ref{Approximation-second-regime-farfield}) can be removed as follows:
\bigskip
 
\begin{enumerate}
\item The scattered fields are approximated as
\begin{equation*}
u^s(x, \theta, \omega)= v^s(x, \theta, \omega)+\frac{\omega^{2}}{k_1} \; G_\omega(x-z) \; v(z, \theta, \omega) \; \int_{D} W(x) dx + \mathcal{O}\left(\frac{\varepsilon^{1+\min(1;j)}}{\left( \omega^{2} - \omega_{n_{0}}^{2}\right)^{2}} \right)
\end{equation*}

\item The farfields are approximated as
\begin{equation*}
u^{\infty}(\hat{x}, \theta, \omega)=v^\infty(\hat{x}, \theta, \omega)+ \frac{\omega^{2}}{k_1} \; v(z, -\hat{x}, \omega) \; v(z, \theta, \omega) \; \int_{D} W(x) dx + \mathcal{O}\left(\frac{\varepsilon^{1+\min(1;j)}}{\left( \omega^{2} - \omega_{n_{0}}^{2}\right)^{2}} \right)
\end{equation*}

\end{enumerate}
where
$ W := \left(I - \gamma \, \omega^{2}  \, N^{0} \right)^{-1}(1)$.
The $\mathcal{O}\left( \varepsilon \right)$ appearing in (\ref{Approximation-second-regime-scattered-field}) and and (\ref{Approximation-second-regime-farfield}), is due to the fact that, see (\ref{Adz}),
\begin{equation*}
\int_{D} W(x) \, dx = -\omega^2_{n_0}\frac{\left( \int_{D} e^{D}_{n_{0}}(x) \, dx \right)^{2}}{\left( \omega^2 - \omega^2_{n_0} \right)} + \mathcal{O}\left( \varepsilon^3 \right). 
\end{equation*}

\bigskip

\item Finally, we do believe that the condition $\underset{x \in \Omega}{\max} \, \rho_{0}(x) < \overline{\rho}_{1}$ used in Theorem \ref{Theorem-Using-Minnear-Resonance} and the condition (\ref{condition-rho_0}) appearing in Theorem \ref{Theorem-Using-Newtonian-Resonances} might be removed.

\end{enumerate}

\section{An application to the acoustic imaging using resonating bubbles}\label{imaging-section}
Based on the expansions given in Theorem \ref{Theorem-Using-Minnear-Resonance} and Theorem \ref{Theorem-Using-Newtonian-Resonances}, in particular (\ref{farfield-first-regime}) and (\ref{Approximation-second-regime-farfield}), we design the following imaging procedure to reconstruct the mass density $\rho_0$ and bulk modulus $k_0$ inside the bounded domain $\Omega$ where they are variable. This procedure is based on the following measured data. Let $[\omega_{min}, \omega_{max}]$ be interval of a possible incident frequencies. We have the following conditions on this interval 
\begin{equation*}
\omega_{min} \leq \sqrt{\frac{4 \pi\, \overline{k}_{1}}{\underset{z\in \Omega}{\max}\rho_0(z) \; \mu_{\partial B} }}\leq  \sqrt{\frac{4 \pi\, \overline{k}_{1}}{\underset{z\in \Omega}{\min} \rho_0(z) \; \mu_{\partial B}}} \leq \omega_{max}.
\end{equation*}
%where
%\begin{equation*}
%A_{\partial B} := \frac{1}{\vert \partial B\vert} \displaystyle \int_{\partial B}  \int_{\partial B} \frac{(x-y) \cdot \nu(x)}{ |x-y| } \, dx \, dy.
%\end{equation*}
This condition makes sense as soon as we know a priori a lower bound and an upper bound of the unknown mass density $\rho_0$.
\bigskip

\begin{enumerate}
\item Collect the farfields before injecting the bubble $D$, i.e. measure the backscattered farfield at a single incident wave $\theta$ and a band of frequencies $\omega \in [\omega_{min}, \omega_{max}]: \quad v^\infty(-\theta, \theta, \omega).$
\bigskip

\item Collect the farfield after injecting the bubble $D$, centered at the point $z \in \Omega$, i.e. measure the backscattered farfield at a single incident wave $\theta$ and a band of frequencies $\omega \in [\omega_{min}, \omega_{max}]:\quad  u^\infty(-\theta, \theta, \omega, z).$
\end{enumerate} 
\bigskip

The imaging procedure goes as follows. We set 
\begin{equation}\label{Imaging-functional}
I(\omega, z):=u^\infty(-\theta, \theta, \omega, z)-v^\infty(-\theta, \theta, \omega)
\end{equation}
 as the imaging functional, remembering that the incident angle $\theta$ is fixed. We have the following properties from (\ref{farfield-first-regime})
\begin{equation}\label{imaging-functional}
I(\omega, z) \sim -\frac{\omega^2_M}{ \overline{k}_1 (\omega^2-\omega^2_M(z))}\vert B \vert\; \varepsilon\;\; [v(z, \theta, \omega)]^2.
\end{equation}
We divide this procedure into two steps:
\begin{enumerate} 
\item Step 1. From this expansion, we recover $\omega^2_M(z)$ as the frequency for which the imaging function $\omega \rightarrow I(\omega, z)$ gets its largest value. From the estimation of this resonance $\omega^2_M(z)$, we reconstruct the mass density at the center of the injected bubble $z$, based on (\ref{Minnaer-3D}), as follows:
\begin{equation*}
\rho_0(z)= \frac{4 \pi\, \overline{k}_{1}}{ \omega^2_M(z) \; \mu_{\partial B}}. 
\end{equation*}
Scanning the domain $\Omega$ by such bubbles, we can estimate the mass density there. 
\bigskip
\item Step 2. To estimate now the bulk modulus, we go back to (\ref{imaging-functional}) or (\ref{farfield-first-regime}), and derive the values of the totale field $[v(z, \theta, \omega)]^2$. This field corresponds to the model without the bubble. 
Hence, we have at hand $v(z, \theta, \omega)$ for $z \in \Omega$ up to a sign (i.e. we know the modulus and the phase up to multiple of $\pi$). 
\bigskip

Use the equation $\nabla\cdot \rho^{-1}_0 \nabla v +\omega^2 k^{-1}_0 v=0$ to recover the values of $k_0$ in the regions where $v$ does not change sign. This can be done by numerical differentiation for instance. Other ways are of course possible to achieve this second step. In addition, we have at hand multiple frequency internal data.

\end{enumerate}
\bigskip

The procedure described above uses the Minnaert resonance. The key point to recover the mass density is the explicit dependance of this resonance on the value of the mass density on it's 'center', see (\ref{Minnaer-3D}). This is not the case for the sequence of resonances coming from the second regime, see (\ref{resonance-second-regime}). Nevertheless, using such resonances allows as to recover the internal values of total field $v(z, \theta, \omega)$, from (\ref{Approximation-second-regime-farfield}), solution of the equation $\nabla\cdot \rho^{-1}_0 \nabla v +\omega^2 k^{-1}_0 v=0$, for multiple frequencies $\omega$,
 as in Step 2. Therefore, we may recover $\rho_0$ via low frequencies and then $k_0$ via moderate frequencies, for instance. However, for technical reasons, we need to know the mass density as we use the condition (\ref{condition-rho_0}). But as we said earlier, we believe that this condition might be removed.
%\newpage
\bigskip

\section{Proof of Theorem \ref{Theorem-Using-Minnear-Resonance}}

We divide the proof into two steps. In the first step, we provide the expansions in the case when the background is homogeneous. This allows to show the key parts in localizing the resonance and computing the scattered fields from incident frequencies close to these resonances. In the second step, we deal with case when the background is heterogeneous  and show how this perturbation influences the derivation of the expansions and the resonances as well.    
\bigskip

Let us recall the Green's function $G_\omega$ satisfying, in the distributional sense, the equation
\begin{equation}\label{eq:defFundSol} 
\underset{x}{\nabla} \cdot \left(\frac{1}{\rho_0} \underset{x}{\nabla} G_\omega(x-z)\right) + \frac{\omega^{2}}{k_0} G_\omega(x-z) = -\delta_z(x) \quad \text{ for any } x,z \in \mathbb{R}^{3}.
\end{equation}
with the radiation conditions at infinity.

\subsection{Constant coefficients}
\label{sect:Minnaert:ConstCoeff}

We assume here that both $\rho_0$ and $k_0$ are constants everywhere in $\mathbb{R}^3$. We recall that $\rho_1 = \bar \rho_1 \varepsilon^2, \; \; k_1 = \bar k_1 \varepsilon^2$, where $\bar \rho_1, \bar k_1$ do not depend on $\varepsilon$. In this case, it is immediate to show that 
\begin{equation*}
G_\omega(x) = \rho_0\; \frac{e^{i \kappa_0 |x|}}{4 \pi |x|},
\end{equation*}
where $\kappa_{0} = \omega \sqrt{\rho_0/k_0}.$ \\
Let $u$ be the solution of \ref{eq:acoustic_scattering}. From the Lippman-Schwinger representation we have
\begin{equation}\label{eq:1}
 u( x) - \alpha \; \underset{x}{\div} \int_D G_\omega (x-y) \nabla u(y) dy -\beta \, \omega^{2} \,  \int_D G_\omega (x-y)  u(y) dy = u^i(x),
\end{equation}
where \; $\alpha := 1/\rho_1 - 1/\rho_0$ and $ \beta := 1/k_1 - 1/k_0.$\\
Since $\underset{x}{\nabla} G_\omega(x-y) = - \underset{y}{\nabla} G_\omega(x-y) $, by integration by parts and $(\ref{eq:defFundSol})$ we have
\begin{equation*}
 \underset{x}{\div} \int_D G_\omega (x-y) \nabla u(y) dy = - \frac{\omega^2 \rho_1}{k_1}   \int_D G_\omega (x-y) u(y) dy  -\int_{\partial D}  G_\omega (x-y) \partial_{\nu}u(y) d\sigma(y),
\end{equation*}
so \eqref{eq:1} becomes 
\begin{equation}\label{eq:RLS}
  u(x) -  \gamma \omega^2  \int_D G_\omega(x-y)u(y) dy + \alpha \int_{\partial D} G_\omega (x-y) \partial_{\nu} u(y) \, d\sigma(y) = u^i(x) ,
\end{equation}
where $\gamma = \beta - \alpha \rho_1/k_1.$
Taking the normal derivative as $x \to \partial D$ from inside $D$, from the jump relations of the derivative of the single layer potential we obtain
\begin{equation}\label{eq:ReLS}
\left(1 + \frac{\alpha \rho_0}{2}\right) \partial_{\nu} \, u(x) - \gamma \omega^2   \partial_{\nu-} \int_D G_\omega(x-y)u(y) dy + \alpha (K_D^{\omega})^* \left[ \partial_{\nu} \, u \right](x) =  \partial_{\nu} \, u^{i}(x),
\end{equation}
where $\left(K_D^{\omega} \right)^{\star}$ is defined by
\begin{equation*}
\left(K_D^{\omega} \right)^{\star}(f) (x) := \emph{p.v.} \;  \int_{\partial D} \frac{\partial G_{\omega}(x-y)}{\partial \nu (x)} \, f(y) \, d \sigma(y), \; f \in L^2(\partial D).
\end{equation*}
Notice that due to the scaling of $\rho_1$ and $k_1$, we have $\gamma =\mathcal{O}(1) $ as $\varepsilon \to 0$.
Expanding in $z$ the fundamental solution, we obtain for $x$ away from $D$, 
\begin{equation*}
\int_D G_\omega(x-y) u(y) dy =  G_\omega(x-z) \int_D u(y) \, dy + \mathcal{O}\left(\varepsilon^{\frac{5}{2}} \Vert u \Vert_{L^2(D)}\right),
\end{equation*}
as by the Cauchy-Schwartz inequality and the fact that $\vert y-z \vert = \mathcal{O}\left(\varepsilon \right)$ we have 
\begin{equation*}
\Big\vert \int_D (y-z) u(y) dy \Big\vert \leq \Vert \cdot - z \Vert \, \Vert u \Vert_{L^2(D)}  =    \mathcal{O}\left(\varepsilon^{\frac{5}{2}} \Vert u \Vert_{L^2(D)}\right).
\end{equation*}
In the same way, we have
\begin{equation*}
\int_{\partial D} |y-z| \;\, \partial_{\nu}u(y) \,\; d \sigma(y) \lesssim  \varepsilon^{2} \;\; \left\Vert \partial_{\nu} u \right\Vert _{L^2(\partial D)}, 
\end{equation*}
so that
\begin{equation*}
\int_{\partial D}  G_\omega(x-y) \partial_{\nu} u(y) d\sigma(y) = G_\omega(x-z) \int_{\partial D} \partial_{\nu} u(y) \, d\sigma(y) +  \mathcal{O}\left(\varepsilon^{2} \left\Vert \partial_{\nu} u \right\Vert_{L^2(\partial D)}\right).
\end{equation*}
Therefore, we can rewrite \eqref{eq:RLS} as
\begin{equation*}
u^s(x) =  \gamma \, \omega^2 \,  G_\omega(x-z) \, \int_D u(y) \, dy \, - \, \alpha \, G_\omega(x-z) \,  \int_{\partial D} \partial_{\nu} u(y) \, d\sigma(y) \, + \,   \mathcal{O}\left(\varepsilon^{\frac{5}{2}} \Vert u \Vert_{L^2(D)} +\alpha  \varepsilon^{2}\left\Vert \partial_{\nu} u \right\Vert _{L^2(\partial D)}\right).
\end{equation*}
From the equation satisfied by $u$, see for instance $(\ref{eq:acoustic_scattering})$, and the divergence theorem, we have
\begin{equation}\label{intDint} 
\int_D u(y) \, dy = - \frac{k_1}{\omega^2} \int_D \nabla \cdot \left( \frac{1}{\rho_1} \nabla u \right)(y) \, dy = - \frac{k_1}{\omega^2 \rho_1} \int_{\partial D} \partial_{\nu} u(y) \, d\sigma(y),
\end{equation}
then
\begin{equation}\label{eq:scatteredWaveAsymp}
u^s(x) =  - \left(\alpha + \frac{\gamma k_1}{\rho_1}\right) G_\omega(x-z) \int_{\partial D} \partial_{\nu} u(y) \, d\sigma(y) +   \mathcal{O}\left(\varepsilon^{\frac{5}{2}} \Vert u \Vert_{L^2(D)} + \alpha \varepsilon^{2} \left\Vert \partial_{\nu} u \right\Vert_{L^2(\partial D)}\right).
\end{equation}

\bigskip

Now, we derive the dominating term of $\int_{\partial D} \partial_\nu u \, d\sigma $ and estimate $\Vert u \Vert_{L^2(D)}$ and $\Vert \partial_\nu u \Vert_{L^2(\partial D)}$ in terms of $\varepsilon$.
Let us consider first the case when $\gamma = 0$. In this case, the equation \eqref{eq:ReLS} becomes
\begin{equation}\label{RkLippSch3}
\left( \left( 1/\alpha + \rho_0/2 \right) I + (K_D^\omega)^*\right) [\partial_\nu u ]= \alpha^{-1} \, \partial_\nu u^i,
\end{equation}
and we can rewrite it as
\begin{equation}\label{eq:RekLippmaSch}
\left( \left( 1/\alpha + \rho_0/2 \right) I + (K_D^0)^*\right)[ \partial_\nu u ]+\left( (K_D^\omega)^* - (K_D^0)^*\right)[ \partial_\nu u ] = \alpha^{-1} \, \partial_\nu u^i.
\end{equation}
Let
\begin{equation}\label{defA}
A_{\partial D} :=  \frac{\rho^2_0}{k_0 \, 4 \pi} \mu_{\partial D},\quad 
\mu_{\partial D}:=\frac{1}{\vert \partial D\vert} \int_{\partial D}\int_{\partial D} \frac{(x-y) \cdot \nu(x)}{4 \pi |x-y|} d \sigma(x) d\sigma(y)\; and \quad A(y) :=\frac{\rho^2_0}{k_0} \int_{\partial D} \frac{(x-y) \cdot \nu(x)}{4 \pi |x-y|} d \sigma(x).
\end{equation}
By the divergence theorem we have $A_{\partial D} > 0$, and it is immediate that $A - A_{\partial D}$ has average zero along $\partial D.$
Expanding $G_\omega(x-y)$ in terms of $|x-y|$, we obtain
\begin{eqnarray}\label{KwK0}
\nonumber
(K_D^\omega)^*[\partial_\nu u](x) &:=& \int_{\partial D} \frac{\partial \; G_{\omega}(x-y)}{\partial \nu(x)} \partial_\nu u( y) d\sigma(y) \\ \nonumber
 &=& \int_{\partial D} \frac{\rho_0}{4 \pi} \, \left[ \frac{(x-y)\cdot \nu (x)}{|x-y|^3} \; \big(-1 + i \kappa_0 |x-y|\big) \;  \sum_{n=0}^\infty \frac{(i \kappa_0|x-y|)^n}{n!} \right]  \partial_\nu u(y) \; d\sigma(y) \\ \nonumber
 &=& (K_D^0)^*[\partial_\nu u](x) - \frac{\kappa_0^2 \rho_0}{8 \pi} \int_{\partial D} \frac{(x-y)\cdot \nu(x)}{|x-y|} \, \partial_\nu u(y) \, d\sigma(y) \\  
&-& \frac{i \kappa_0^3 \rho_0}{12 \pi} \int_{\partial D}  (x-y) \cdot \nu(x) \, \partial_\nu u(y) \; d\sigma(y) + \mathcal{O}\left(\varepsilon^3 \Vert \partial_\nu u \Vert_{L^2(\partial D)}\right),
\end{eqnarray}
and integrating \eqref{eq:RekLippmaSch} on $\partial D$, as $K_D^0[1] = -\rho_0 / 2$, we obtain
\begin{eqnarray}\label{eq:LippSchwInt}
\nonumber
\left(\frac{1}{\alpha} - \frac{\kappa_0^2 \, k_{0}}{2 \, \rho_{0}} A_{\partial D}\right) \int_{\partial D} \partial_{\nu} u (x)\, d \sigma(x) &=& \frac{1}{\alpha} \int_{\partial D} \partial_\nu u^i(x) \, dx + \frac{i \kappa_0^3 \rho_0}{12 \pi} \int_{\partial D} \int_{\partial D} (x-y) \cdot \nu(x) \partial_\nu u(y) dy dx \\ &+& \frac{\kappa_0^3}{2 \, \rho_{0}} \int_{\partial D} \left(A(y) - A_{\partial D}\right) \partial_\nu u(y) dy   +   \mathcal{O}\left(\varepsilon^5 \Vert \partial_\nu u \Vert_{L^{2}(\partial D)}\right).
\end{eqnarray}
We can estimate the integral which contains $A(\cdot) - A_{\partial D}$ by rewriting
\begin{eqnarray}\label{ll}
\nonumber
\int_{\partial D} (A(y) - A_{\partial D}) \partial_\nu u(y) d\sigma(y) & \stackrel{\ref{RkLippSch3}}{=} & \alpha^{-1} \, \int_{\partial D}(A(y) - A_{\partial D}) \left( (\rho_0/2 + 1/\alpha + (K_D^\omega)^*)^{-1}[\partial_\nu u^i] \right) (y) \, d\sigma(y) \\ \nonumber
&=& \alpha^{-1} \, \int_{\partial D} \left((\rho_0/2 + 1/\alpha + K_D^\omega)^{-1}[A(\cdot) - A_{\partial D}] \right) (y) \; \partial_\nu u^i(y) \; d\sigma(y)  \\
& \leq & \alpha^{-1} \, \left\Vert (\rho_0/2 + 1/\alpha + K_D^\omega)^{-1}[A(\cdot) - A_{\partial D}]\left\Vert_{L^2(\partial D)} \right\Vert \partial_\nu u^i \right\Vert_{L^2(\partial D)}
= \mathcal{O}\left( \varepsilon^6 \right),
\end{eqnarray}
the last equality being a consequence of the fact that
$(\rho_0/2 + 1/\alpha + K_D^\omega)^{-1}$ does not scale on $L_0^2(\partial D) := \lbrace f \in L^2(\partial D) : \int_{\partial D} f d\sigma = 0 \rbrace$, and $A$ and $A_{\partial D}$ scale both as $\varepsilon^2$.
Then \eqref{eq:LippSchwInt} becomes
\begin{equation*}
\left(\frac{1}{\alpha} - \frac{i \kappa_0^3 |D| \rho_0}{4 \pi} - \frac{\kappa_0^2 \, k_{0}}{2 \rho_{0}} A_{\partial D} \right) \int_{\partial D} \partial_\nu u d\sigma = \frac{1}{\alpha} \int_{\partial D} \partial_\nu u^i d\sigma + \mathcal{O}\left(\varepsilon^5 \Vert \partial_\nu u \Vert_{L^2(\partial D)} + \varepsilon^6\right),
\end{equation*}
where we have used the fact that $\int_{\partial D} (x-y) \cdot \nu(x) d\sigma(x) = \int_D \div(x-y) dx = 3 |D|.$
Then, multiplying by $\alpha$ (which scales like $\varepsilon^{-2}$), we obtain the expression of the following dominating term of $\int_{\partial D} \partial_\nu u d\sigma$,
\begin{equation*}
\left(1 - \frac{i \, \alpha \, \kappa_0^3 |D| \rho_0}{4 \pi} - \frac{\alpha \, \kappa_0^2 \, k_{0}}{2 \rho_{0}} A_{\partial D} \right) \int_{\partial D} \partial_\nu u \, d\sigma = \int_{\partial D} \partial_\nu u^i \, d\sigma + \mathcal{O}\left(\varepsilon^3 \Vert \partial_\nu u \Vert_{L^2(\partial D)} + \varepsilon^4\right).
\end{equation*}

\bigskip

In the general case of $\gamma \neq 0$, instead of identity \eqref{RkLippSch3}, we have 
\begin{equation*}
\Big(\frac{1}{\alpha} + \frac{\rho_0}{2} + (K_D^\omega)^*\Big) [\partial_\nu u] (x) - \frac{\omega^2 \gamma}{\alpha} \partial_{\nu-} \int_D G_\omega(x-y) u(y) d y  = \alpha^{-1} \,   \partial_{\nu}u^i(x).
\end{equation*}
Integrating on $\partial D$, and integrating by parts the last integral, we obtain
\begin{equation*}
\int_{\partial D} \Big(\frac{1}{\alpha} + \frac{\rho_0}{2} + (K_D^\omega)^*\Big) [\partial_\nu u](x) d\sigma(x) + \frac{\omega^2 \gamma \rho_{0}}{\alpha} \left[ \frac{\omega^{2}}{k_{0}} \int_{D}  \int_D G_\omega(x-y) u(y) dy \, dx + \int_{D} u(x) dx \right]  = \alpha^{-1} \, \int_{\partial D}  \partial_{\nu}u^i(x) d\sigma(x).
\end{equation*}
Then, with the same estimates as in \eqref{eq:LippSchwInt}, we obtain
\begin{eqnarray}\label{dint+r}
\nonumber
\left(\frac{1}{\alpha} - \frac{i \kappa_0^3 |D| \rho_0}{4 \pi} - \frac{\kappa_{0}^{2} \, k_{0}}{2 \, \rho_{0}} A_{\partial D} \right) \int_{\partial D} \partial_\nu u(x) \, d\sigma(x) &+&  \frac{\omega^2 \gamma \rho_0}{\alpha} \int_D u(x) \, dx 
= \alpha^{-1} \int_{\partial D} \partial_\nu u^i (x) d\sigma(x) \\ &-&  \frac{\omega^2 \, \gamma \, \kappa^{2}_{0}}{\alpha} \int_D \int_D G_\omega(x-y)u(y) dy dx + error,
\end{eqnarray}
where 
\begin{equation*}
error := \mathcal{O}\left(\varepsilon^{5} \Vert \partial_\nu u \Vert_{L^2(\partial D)} + \varepsilon^6\right).
\end{equation*}
Next, with help of the Cauchy-Schwartz inequality, we estimate the double volume integral as 
\begin{equation*}
\left\vert \frac{\omega^2 \, \gamma \, \kappa^{2}_{0}}{\alpha} \int_D \int_D G_\omega(x-y)u(y) dy dx \right\vert \lesssim \varepsilon^{\frac{11}{2}} \, \Vert u \Vert_{L^{2}(D)},  
\end{equation*}
then, the equation $(\ref{dint+r})$ takes the following form
\begin{equation*}
\left(\frac{1}{\alpha} - \frac{i \kappa_0^3 |D| \rho_0}{4 \pi} - \frac{\kappa_{0}^{2} \, k_{0}}{2 \, \rho_{0}} A_{\partial D} \right) \int_{\partial D} \partial_\nu u (x) \; d\sigma(x)  +  \frac{\omega^2 \gamma \rho_0}{\alpha} \int_D u (x)\; dx
= \frac{1}{\alpha} \int_{\partial D} \partial_\nu u^i (x) d\sigma(x) +r,
\end{equation*}
where
\begin{equation}\label{err}
r := \mathcal{O}\left(\varepsilon^{\frac{11}{2}} \Vert u \Vert_{L^2(D)} + \varepsilon^{5} \Vert \partial_\nu u \Vert_{L^2(\partial D)} + \varepsilon^6\right).
\end{equation}
%We use $\ref{intDint}$ to rewrite the left hand side of the last equation as 
%\begin{equation*}
%\left(\frac{1}{\alpha} - \frac{i \kappa_0^3 |D| \rho_0}{4 \pi} - \frac{\kappa_{0}^{2} \, k_{0}}{2 \, \rho_{0}} A_{\partial D} - \frac{\gamma k_1 \rho_0 }{\alpha \rho_1} \right) \, \int_{\partial D} \partial _\nu u  = \frac{1}{\alpha} \int_{\partial D} \partial_\nu u^i  + r = \frac{1}{\alpha} \int_{D} \Delta u^i + r,  
%\end{equation*}
We use $\ref{intDint}$ and the fact that $\Delta u^i = - \kappa_{0}^{2} \; u^{i}$  to obtain 
\begin{equation}\label{eq:dudnuAsympExp}
\left(\frac{1}{\alpha} - \frac{i \kappa_0^3 |D| \rho_0}{4 \pi} - \frac{\omega^{2} }{2} A_{\partial D} - \frac{\gamma k_1 \rho_0 }{\alpha \rho_1} \right) \, \int_{\partial D} \partial _\nu u  = - \frac{\omega^2 \rho_0}{\alpha k_0} \int_{D} u^i +  r = - \frac{\omega^2 \, \rho_{0}}{\alpha \, k_{0}} \vert D \vert  u^i(z) + r.  
\end{equation}
Recalling that $\beta=1/k_1-1/k_0$ and $\alpha=1/\rho_1-1/\rho_0$, then $\gamma=\beta-\alpha \rho_1/k_1 =\rho_1 / (\rho_0 k_1)- 1/k_0$, and then 
$1-\gamma k_1 \rho_0/\rho_1= \rho_0 k_1 / (\rho_1 k_0)$. 
We define the Minnaert frequency $\omega_M$ as
\begin{equation*}
\omega^2_M:= \frac{8 \, \pi \, \overline{k}_{1}}{\rho_{0} \, \mu_{\partial B}}.
\end{equation*}
Observe that $\omega^2_M$ is the dominating part of the zero, in terms of $\omega^2$, of the left hand side of $(\ref{eq:dudnuAsympExp})$.\\
To estimate the error term $r$ in $(\ref{err})$, we need the following a priori estimates.
\begin{proposition}\label{prop:uAsympExp}
For $u = u^i + u^s $, solution of $(\ref{eq:acoustic_scattering})$, it holds
\begin{equation}\label{eq:u_estimate_2}
\Vert \partial_\nu u \Vert_{L^2(\partial D)}  =  \mathcal{O}\left(\frac{\varepsilon^{2}}{\omega^2 - \omega_M^2} \right),
\end{equation}
and
\begin{equation}\label{eq:u_estimate_1}
\Vert u \Vert_{L^2(D)} = \mathcal{O}\left(\frac{\varepsilon^{\frac{3}{2}}}{\omega^2 - \omega_M^2} \right),
\end{equation}
under the condition that $\dfrac{\varepsilon}{\omega^2 - \omega_M^2}$ is small enough.
\end{proposition}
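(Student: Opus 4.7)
The plan is to exploit the coupled system (\ref{eq:RLS})--(\ref{eq:ReLS}) and to split $\partial_\nu u$ into its boundary-average part and its mean-free part, since only the former is responsible for the blow-up near the Minnaert resonance. Concretely, I would write $\partial_\nu u = c_0\,\mathbf{1}_{\partial D}+\phi$ with $\phi\in L^2_0(\partial D):=\{f\in L^2(\partial D):\int_{\partial D}f\,d\sigma=0\}$. The guiding fact is that $(K_D^0)^*$ has $-\rho_0/2$ as an eigenvalue with constant eigenfunction, so on $L^2_0(\partial D)$ the operator $\tfrac{1}{\alpha}+\tfrac{\rho_0}{2}+(K_D^\omega)^*$ is uniformly invertible in $\varepsilon$ (pull back to the reference domain $B$, use that $(K_B^0)^*$ is a fixed Fredholm operator whose spectrum on $L^2_0(\partial B)$ stays at positive distance from $-\rho_0/2$, and that the perturbations $(K_D^\omega)^*-(K_D^0)^*$ and $1/\alpha$ are both $\mathcal{O}(\varepsilon^2)$ by (\ref{KwK0}) and the choice of scaling), while on the constant subspace it is nearly singular with singularity captured exactly by (\ref{eq:dudnuAsympExp}).

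First I would estimate $\phi$. Projecting (\ref{eq:ReLS}) onto $L^2_0(\partial D)$ and inverting the above operator there gives a bound of the type $\|\phi\|_{L^2(\partial D)}\lesssim \|\alpha^{-1}\partial_\nu u^i\|_{L^2(\partial D)}+\text{(volume coupling terms)}$, where the volume contribution from the $\gamma\omega^2$ term is controlled by the scaling of the Newtonian potential; this will produce a bound on $\|\phi\|$ that is strictly smaller than the eventual $\mathcal{O}(\varepsilon^2/(\omega^2-\omega_M^2))$ target. Second, for the constant part I would use the already derived identity (\ref{eq:dudnuAsympExp}): its left-hand factor scales as $\varepsilon^2(\omega^2-\omega_M^2)$ by construction of $\omega_M$, while the right-hand side is $\mathcal{O}(\varepsilon^5)$ plus the error $r$ of (\ref{err}). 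Dividing yields $\bigl|\int_{\partial D}\partial_\nu u\,d\sigma\bigr|=\mathcal{O}(\varepsilon^{3}/(\omega^2-\omega_M^2))$, hence $|c_0|=\mathcal{O}(\varepsilon/(\omega^2-\omega_M^2))$ and $\|c_0\mathbf 1_{\partial D}\|_{L^2(\partial D)}=|c_0|\,|\partial D|^{1/2}=\mathcal{O}(\varepsilon^2/(\omega^2-\omega_M^2))$, which dominates $\|\phi\|$ under the smallness condition. Summation provides (\ref{eq:u_estimate_2}).

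Finally, for (\ref{eq:u_estimate_1}) I would return to the volume Lippmann--Schwinger equation (\ref{eq:RLS}) restricted to $D$, written schematically as $(I-\gamma\omega^2 N^\omega_D)\,u = u^i-\alpha\, S^\omega_D[\partial_\nu u]$ on $D$. Since the Newtonian on $D$ satisfies $\|N^\omega_D\|_{L^2(D)\to L^2(D)}=\mathcal{O}(\varepsilon^2)$ (and $\gamma=\mathcal{O}(1)$ in this regime), the operator on the left is invertible with inverse bounded by a universal constant; combined with the standard single-layer estimate $\|S^\omega_D\|_{L^2(\partial D)\to L^2(D)}=\mathcal{O}(\varepsilon^{3/2})$, the factor $\alpha=\mathcal{O}(\varepsilon^{-2})$, the bound already obtained on $\|\partial_\nu u\|_{L^2(\partial D)}$, and $\|u^i\|_{L^2(D)}=\mathcal{O}(\varepsilon^{3/2})$, this yields $\|u\|_{L^2(D)}=\mathcal{O}(\varepsilon^{3/2}/(\omega^2-\omega_M^2))$.

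The main obstacle that I expect is that the error term $r$ in (\ref{err}) itself contains $\|u\|_{L^2(D)}$ and $\|\partial_\nu u\|_{L^2(\partial D)}$, so the argument is not literally sequential but has to be closed by a bootstrap: one first establishes the estimates with $r$ treated as a small self-referring remainder, then absorbs it into the left-hand side. Making this absorption rigorous is precisely where the hypothesis that $\varepsilon/(\omega^2-\omega_M^2)$ be sufficiently small enters, as it guarantees that the a priori contributions of $\|u\|_{L^2(D)}$ and $\|\partial_\nu u\|_{L^2(\partial D)}$ coming from $r$ do not overwhelm the dominant resonant balance in (\ref{eq:dudnuAsympExp}).
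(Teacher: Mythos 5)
Your proposal follows essentially the same route as the paper's proof: the dichotomy $\Vert T\Vert_{\mathcal L(L^2(\partial D))}=\mathcal{O}(\alpha)$ versus $\Vert T\Vert_{\mathcal L(L^2_0(\partial D))}=\mathcal{O}(1)$ for $T=\left(\tfrac{1}{\alpha}+\tfrac{\rho_0}{2}+(K_D^\omega)^*\right)^{-1}$, the resonance identity $(\ref{eq:dudnuAsympExp})$ to bound $\int_{\partial D}\partial_\nu u\,d\sigma$ by $\mathcal{O}\left(\varepsilon^{3}/(\omega^2-\omega_M^2)\right)$ up to self-referential errors, the volume Lippmann--Schwinger equation with the $\varepsilon^{3/2}$ single-layer scaling for $\Vert u\Vert_{L^2(D)}$, and the closing absorption under smallness of $\varepsilon/(\omega^2-\omega_M^2)$; the only cosmetic difference is that you split the unknown $\partial_\nu u$ into average plus mean-free parts while the paper splits the data before applying $T$. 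This is fine (just note that, since constants are not an invariant subspace of $(K_D^\omega)^*$, the coupling of the average into the mean-free equation makes $\phi$ also of order $\varepsilon^{2}/(\omega^2-\omega_M^2)$ rather than strictly smaller, which does not affect the final estimates).
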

\begin{proof}
Let us indicate as $C$ a generic constant independent of $\varepsilon$.
From $(\ref{eqNS}$) we have
\begin{equation}\label{equint}
\left(I - \gamma \omega^2 N_D^\omega \right) (u) + \alpha \, S_D^\omega [\partial_\nu u] = u^{i},
\end{equation}
where $N_D^\omega$ is the Newtonian operator from $L^{2}(D)$ to $H^{2}(D)$ defined by $N_D^\omega(u)(x) := \int_{D} G_{\omega}(x-y) \, u(y) dy$. Since $\gamma = \mathcal{O}\left( 1 \right)$ and thus $\left\Vert N_D^\omega \right\Vert_{\mathcal{L}} \xrightarrow{\varepsilon \to 0} 0$, for $\varepsilon$ small enough we have that $I - \gamma \omega^2 N_D^\omega$ is invertible, so $(\ref{equint})$ takes the following form 
\begin{equation*}
u = - \alpha (I - \gamma \omega^2 N_D^\omega)^{-1} (S_D^\omega [\partial_\nu u]) + (I - \gamma \omega^2 N_D^\omega)^{-1} (u^i).  
\end{equation*}
Taking the $L^{2}$-norm in both side of the last equation and using the fact that $\left\Vert \left(I - \gamma \omega^2 N_D^\omega\right)^{-1} \right\Vert_{\mathcal{L}} \leq C$ to obtain 
\begin{equation}\label{eq:aprioribound:uL2}
\Vert u \Vert_{L^2(D)} \leq \alpha \, C \, \Vert S_D^\omega [\partial_\nu u] \Vert_{L^2(D)} + C \, \Vert u^i \Vert_{L^2(D)}.
\end{equation}
In order to finish the last estimation we need to precise how does the single layer scale. For this, by definition, we have
\begin{eqnarray}\label{chgvarsl}
\nonumber
\Big\Vert S_D^\omega (f) \Big\Vert_{L^2(D)}^2 &:=& \int_{D} \left\vert \int_{\partial D} G_\omega (x-y) \, f(y) dy \right\vert^2 dx, \quad \forall \, f \in L^{2}(\partial D) \\ 
&=& \varepsilon^5 \int_{ B} \left\vert \int_{\partial B} G_{\varepsilon \omega} (\eta - \xi) \, \tilde{f} \, (\xi) d\xi  \right\vert^2 d\eta := \varepsilon^5 \, \Big\Vert S_{B}^{\varepsilon \omega} \; (\tilde{f})  \Big\Vert_{L^2(B)}^2
\end{eqnarray}
and from the continuity of $S_B^{\varepsilon \omega}$ from $ L^2(\partial B)$ to $H^{\frac{3}{2}}(B)$ we have that
%\footnote{We use the $L^{2}$-norm instead of $H^{\frac{3}{2}}$-norm to achieve our calculus and for this the estimation $(\ref{SL})$ may not be the optimal one, but it's still sufficient for the next.} 
\begin{equation*}
\Big\Vert S_D^\omega (f) \Big\Vert_{L^2(D)}^2 =  \varepsilon^5 \, \Big\Vert S_{B}^{\varepsilon \omega} \; (\tilde{f})  \Big\Vert_{L^2(B)}^2 \leq  \varepsilon^5 \, C \, \big\Vert \tilde{f} \big\Vert_{L^{2}(\partial B)}^2 = \varepsilon^{3} \, C \, \big\Vert f \big\Vert_{L^{2}(\partial D)}^2,
\end{equation*}
in particular 
\begin{equation}\label{SL}
\Big\Vert S_D^\omega \left( \partial_{\nu} u \right) \Big\Vert_{L^2(D)} \leq \varepsilon^{\frac{3}{2}} \, C \, \big\Vert \partial_{\nu} u \big\Vert_{L^{2}(\partial D)}.
\end{equation}
Combining $(\ref{eq:aprioribound:uL2})$ and $(\ref{SL})$,
we obtain
\begin{equation}\label{eq:apriori:ubound1}
\Vert u \Vert_{L^2(D)}  \leq \alpha \, \varepsilon^{\frac{3}{2}} \, C \, \Vert \partial_\nu u \Vert_{L^2(\partial D)} + C \, \Vert u^i \Vert_{L^2(D)}.
\end{equation}
To manage the term $\Vert \partial_\nu u \Vert_{L^2(\partial D)}$ we use the boundary integral equation given by $(\ref{eqNDL})$, to write 
\begin{equation}\label{partialDintequa}
\partial_{\nu} u =  \alpha^{-1} \, 
\left(\frac{1}{\alpha}  + \frac{\rho_0}{2}  + (K_D^{\omega})^* \right)^{-1} \left[\partial_{\nu} u^{i} \right] + \frac{\omega^2\gamma}{\alpha} \left(\frac{1}{\alpha}  +\frac{\rho_0}{2}  + (K_D^\omega)^* \right)^{-1} \left[\partial_{\nu} N_D^\omega \left( u \right) \right] \;\; \text{on} \;\;  \partial D. 
\end{equation}
In the next, for shortness, we set
\begin{equation*}
T := \left( \frac{1}{\alpha} + \frac{\rho_0}{2}  + (K_D^\omega)^* \right)^{-1}
\end{equation*}
and we rewrite $(\ref{partialDintequa})$ as
\begin{eqnarray}\label{boundary}
\nonumber
\frac{\partial u}{\partial \nu} &=&  \frac{1}{\alpha} 
\; T \; \left[\frac{\partial u^i}{\partial \nu} - \frac{1}{\vert \partial D \vert} \int_{\partial D} \frac{\partial u^i}{\partial \nu} \right] +  \frac{1}{\vert \partial D \vert} \int_{\partial D} \frac{\partial u^i}{\partial \nu} \; \frac{1}{\alpha} \;
T \; \left[1 \right]\\
&+& \frac{\omega^2\gamma}{\alpha} \; T \;  \left[\frac{\partial N_D^\omega \left( u \right)}{\partial \nu^{-}} - \frac{1}{\vert \partial D \vert} \int_{\partial D} \frac{\partial N_D^\omega \left( u \right)}{\partial \nu^{-}} \right] + \frac{\omega^2\gamma}{\alpha} \; \frac{1}{\vert \partial D \vert} \int_{\partial D} \frac{\partial N_D^\omega \left( u \right)}{\partial \nu^{-}} \; T \; \left[1 \right].
\end{eqnarray}
Since $\frac{\rho_{0}}{2}$ is an eigenvalue of $(K_D^0)^*$ with associated eigenspace consisting of constant functions, we have the estimates
\begin{equation}\label{eq:proof:apriori:IKinv:1}
\left\Vert T \right\Vert_{\mathcal L(L^2(\partial D))} = \left\Vert \left(\frac{1}{\alpha}  + \frac{\rho_0}{2}  + (K_D^\omega)^* \right)^{-1} \right\Vert_{\mathcal L(L^2(\partial D))} \leq   C \alpha,
\end{equation}
and on the space of functions with zero average we have 
\begin{equation}\label{eq:proof:apriori:IKinv:2}
\left\Vert T \right\Vert_{\mathcal L(L_{0}^2(\partial D))} = \left\Vert \left(\frac{1}{\alpha}  + \frac{\rho_0}{2}  + (K_D^\omega)^* \right)^{-1} \right\Vert_{\mathcal L(L^2_0(\partial D))} \leq C. 
\end{equation}
 \\
Now, take the $L^{2}(\partial D)$-norm in both side of $(\ref{boundary})$, with the help of $(\ref{eq:proof:apriori:IKinv:1})$ and $(\ref{eq:proof:apriori:IKinv:2})$ we obtain   
\begin{eqnarray}\label{normpartialu}
\nonumber
\left\Vert \frac{\partial u}{\partial \nu} \right\Vert_{L^{2}(\partial D)} & \lesssim &  \alpha^{-1}  \; \left\Vert \frac{\partial u^i}{\partial \nu} - \frac{1}{\vert \partial D \vert} \int_{\partial D} \frac{\partial u^i}{\partial \nu} \right\Vert_{L_{0}^{2}(\partial D)} +  \frac{1}{\vert \partial D \vert} \left\vert \int_{\partial D} \frac{\partial u^i}{\partial \nu} \right\vert  \; \left\Vert 1 \right\Vert_{L^{2}(\partial D)} \\
&+& \alpha^{-1}  \; \left\Vert \frac{\partial N_D^\omega \left( u \right)}{\partial \nu^{-}} - \frac{1}{\vert \partial D \vert} \int_{\partial D} \frac{\partial N_D^\omega \left( u \right)}{\partial \nu^{-}} \right\Vert_{L_{0}^{2}(\partial D)} +  \frac{1}{\vert \partial D \vert} \left\vert \int_{\partial D} \frac{\partial N_D^\omega \left( u \right)}{\partial \nu^{-}} \right\vert   \;  \left\Vert 1 \right\Vert_{L^{2}(\partial D)}.
\end{eqnarray}
Obviously, we have
\begin{equation}\label{partialuinc}
\left\vert \int_{\partial D} \frac{\partial u^i}{\partial \nu} \right\vert = \left\vert \int_{D} \Delta u^i \right\vert = \vert \kappa_{0}^{2} \vert \, \left\vert \int_{D}  u^i \right\vert = \mathcal{O}\left( \varepsilon^{3} \right) 
\end{equation}
and, by the triangular inequality and the smoothness of $\partial_{\nu} u^{i}$, we obtain 
\begin{equation}\label{partialuinc-moy}
\left\Vert \frac{\partial u^i}{\partial \nu} - \frac{1}{|\partial D|} \int_{\partial D} \frac{\partial u^i}{\partial \nu} \right\Vert _{L^2_{0}(\partial D)} \lesssim \left\Vert \frac{\partial u^i}{\partial \nu} \right\Vert _{L^2_{0}(\partial D)} = \mathcal{O}\left( \varepsilon \right).
\end{equation}
%The analysis of the two last terms in $(\ref{normpartialu})$ are more delicate compared to the previous ones. We proceed as follow: 
We also have, recalling the definition of the Green function,
\begin{eqnarray}\label{wwm/w-wm}
\nonumber
 \int_{\partial D} \frac{\partial N_D^\omega (u)}{\partial \; \nu}(x) dx  &=& -\rho_0 \, \int_D  u(x) dx  - \omega^2 \frac{\rho_0}{k_0} \int_D \int_D G_\omega(x-y)u(y) dy dx \\ \nonumber
&=& - \rho_0 \int_D u(x) \, dx + \mathcal{O}\left(\varepsilon^{\frac{7}{2}} \; \Vert u \Vert_{L^2(D)}\right)\\ 
&\stackrel{(\ref{intDint})}{=}& \frac{k_1 \rho_0}{\omega^2 \rho_1} \int_{\partial D} \partial_\nu u(x) d\sigma(x) + \mathcal{O}\left(\varepsilon^{\frac{7}{2}} \; \Vert u \Vert_{L^2(D)}\right).
\end{eqnarray} 
We now need to estimate $\int_{\partial D} \partial_\nu u \, d\sigma.$ To do this, recalling $(\ref{eq:dudnuAsympExp})$, we have
\begin{equation}\label{kappa1/kappa0}
\int_{\partial D} \partial_\nu u \, d\sigma = \frac{\kappa_{1}^{2}}{\kappa_{0}^{2}} \; \frac{\omega^2 \omega_M^2}{(\omega^2 - \omega_M^2)} |D| u^i(z) + O \left( \frac{\varepsilon^{\frac{7}{2}} \Vert u \Vert_{L^2(D)} +\varepsilon^{2} \Vert \partial_\nu u \Vert_{L^2(\partial D)} + \varepsilon^4}{\omega^2 - \omega_M^2}  \right),
\end{equation}
which we can rewrite as
\begin{equation}\label{eq:proof:apriori:dudnuestimate}
\int_{\partial D} \partial_\nu u  \, d\sigma =  \mathcal{O} \left( \frac{\varepsilon^3}{\omega^2 - \omega_M^2} \right) + \mathcal{O}\left( \frac{\varepsilon^{\frac{7}{2}} \Vert u \Vert_{L^2(D)} +\varepsilon^{2} \Vert \partial_\nu u \Vert_{L^2(\partial D)} + \varepsilon^4}{\omega^2 - \omega_M^2}  \right). 
\end{equation}
Finally substituting this in $(\ref{wwm/w-wm})$ we obtain
\begin{equation}\label{estimate-pn-u}
\int_{\partial D} \frac{\partial N_D^\omega (u)}{\partial \nu}(x) dx = \mathcal{O}\left( \frac{\varepsilon^3}{\omega^2 - \omega_M^2} \right) + \mathcal{O}\left( \frac{\varepsilon^{\frac{7}{2}} \Vert u \Vert_{L^2(D)} +\varepsilon^{2} \Vert \partial_\nu u \Vert_{L^2(\partial D)} + \varepsilon^4}{\omega^2 - \omega_M^2}  \right).
\end{equation}
Now, we estimate the last term in the right hand side of $(\ref{normpartialu})$. For this, we simply write 
\begin{equation}\label{last-but-one}
\left\Vert \frac{\partial N_D^\omega \left( u \right)}{\partial \nu^{-}} - \frac{1}{\vert \partial D \vert} \int_{\partial D} \frac{\partial N_D^\omega \left( u \right)}{\partial \nu^{-}} \right\Vert_{L_{0}^{2}(\partial D)}  \leq  \left\Vert \frac{\partial N_D^\omega \left( u \right)}{\partial \nu^{-}}  \right\Vert_{L_{0}^{2}(\partial D)} + \frac{\Vert 1 \Vert_{L^{2}(\partial D)}}{\vert \partial D \vert} \left\vert   \int_{\partial D} \frac{\partial N_D^\omega \left( u \right)}{\partial \nu^{-}} \right\vert
\end{equation} 
and deal only with the first term since the second one is estimated by $(\ref{estimate-pn-u})$. For this, by definition and scale, we have  
\begin{eqnarray}\label{chgvarn}
\nonumber
\left\Vert \frac{\partial N_D^\omega \left( u \right)}{\partial \nu^{-}}  \right\Vert^{2}_{L^{2}(\partial D)} &:=& \int_{\partial D} \left\vert \partial \nu_{-} \int_{D} G_{\omega}(x-y) \, u(y) \, dy \right\vert^{2} d\sigma(x) \\ 
&=& \varepsilon^{4} \int_{\partial B} \left\vert \partial \nu_{-} \int_{B} G_{\omega \, \varepsilon}(\eta - \xi) \, \tilde{u}(\xi) \, d\xi \right\vert^{2} d\sigma(\eta) = \varepsilon^{4} \, \left\Vert \frac{\partial N_B^{\varepsilon \, \omega} \left( \tilde{u} \right)}{\partial \nu^{-}}  \right\Vert^{2}_{L^{2}(\partial B)}.
\end{eqnarray}
From the continuity of $N^{\varepsilon \, \omega}_{B} : L^{2}(B) \rightarrow H^{2}(B)$, we deduce that
\begin{equation}\label{normpartialNu}
\left\Vert \frac{\partial N_D^\omega \left( u \right)}{\partial \nu^{-}}  \right\Vert_{L^{2}(\partial D)} = \varepsilon^{2} \, \left\Vert \frac{\partial N_B^\omega \left( \tilde{u} \right)}{\partial \nu^{-}}  \right\Vert_{L^{2}(\partial B)} \leq \varepsilon^{2} \; C^{te} \; \Vert \tilde{u} \Vert_{L^{2}(\partial B)} = \varepsilon^{\frac{1}{2}} \; C^{te} \; \Vert u \Vert_{L^{2}(\partial D)},
\end{equation}
and plugging $(\ref{normpartialNu})$ in $(\ref{last-but-one})$ we obtain 
\begin{equation*}
\left\Vert \frac{\partial N_D^\omega \left( u \right)}{\partial \nu^{-}} - \frac{1}{\vert \partial D \vert} \int_{\partial D} \frac{\partial N_D^\omega \left( u \right)}{\partial \nu^{-}} \right\Vert_{L_{0}^{2}(\partial D)}  \lesssim  \varepsilon^{\frac{1}{2}}  \; \Vert u \Vert_{L^{2}(\partial D)} + \varepsilon^{-1} \;  \left\vert   \int_{\partial D} \frac{\partial N_D^\omega \left( u \right)}{\partial \nu^{-}} \right\vert.
\end{equation*}
Then, by $(\ref{estimate-pn-u})$, we have 
\begin{equation}\label{fff}
\left\Vert \frac{\partial N_D^\omega \left( u \right)}{\partial \nu^{-}} - \frac{1}{\vert \partial D \vert} \int_{\partial D} \frac{\partial N_D^\omega \left( u \right)}{\partial \nu^{-}} \right\Vert_{L_{0}^{2}(\partial D)}  \lesssim  \varepsilon^{\frac{1}{2}}  \; \Vert u \Vert_{L^{2}(\partial D)} + \mathcal{O}\left( \frac{\varepsilon^2}{\omega^2 - \omega_M^2} \right) + \mathcal{O}\left( \frac{\varepsilon \Vert \partial_\nu u \Vert_{L^2(\partial D)} + \varepsilon^3}{\omega^2 - \omega_M^2}  \right).
\end{equation}
Therefore, by $(\ref{partialuinc}), (\ref{partialuinc-moy}), (\ref{estimate-pn-u})$ and $(\ref{fff})$, we get 
\begin{equation}\label{eq:apriori:pdnuu}
\Vert \partial_{ \nu} u \Vert_{L^2(\partial D)} \leq \mathcal{O} \left(\frac{\varepsilon^\frac{5}{2} \Vert u \Vert_{L^2(D)}+\varepsilon \Vert \partial_\nu u \Vert_{L^2(\partial D)}}{(\omega^2 - \omega_M^2)} + \frac{\varepsilon^{2}}{\left( \omega^2 - \omega_M^2 \right)} \right)
\end{equation}
and if $\varepsilon / (\omega^2 - \omega_M^2)$ is small enough
we obtain 
\begin{equation}\label{partialDaprioriestimate}
\Vert \partial_{ \nu} u \Vert_{L^2(\partial D)} \leq \mathcal{O} \left(\frac{\varepsilon^\frac{5}{2} \Vert u \Vert_{L^2(D)}}{(\omega^2 - \omega_M^2)} + \frac{\varepsilon^{2}}{\left( \omega^2 - \omega_M^2 \right)} \right).
\end{equation}
Substituting this estimate for $\Vert \partial_{ \nu} u \Vert_{L^2(\partial D)} $ in \eqref{eq:apriori:ubound1}, to obtain 
\begin{equation*}
\Vert u \Vert_{L^2(D)}   \lesssim  \alpha \, \varepsilon^{3/2} \, \, \Vert \partial_\nu u \Vert_{L^2(\partial D)} +  \, \Vert u^i \Vert_{L^2(D)} =   \frac{\varepsilon^2 \Vert u \Vert_{L^2(D)}}{(\omega^2 - \omega_M^2)} + \frac{\varepsilon^{\frac{3}{2}}}{\left( \omega^2 - \omega_M^2 \right)}  +  \, \varepsilon^{\frac{3}{2}}.
\end{equation*}
This justify $(\ref{eq:u_estimate_1})$. Now, use $(\ref{eq:u_estimate_1})$ into  $(\ref{partialDaprioriestimate})$ to get $(\ref{eq:u_estimate_2})$.

%\bigskip

% for 
%\eqref{eq:u_estimate_1}.
% And \eqref{eq:u_estimate_1} together with \eqref{eq:apriori:pdnuu} immediately proves also \eqref{eq:u_estimate_2}.
\end{proof}
%As, $|\partial_\nu u|_{L^2(\partial D)} =  O\left(\frac{\varepsilon^2}{\omega^2 - \omega_M^2} \right)$, we see that 
%$$\frac{i \rho_0 \kappa_0^3 |D|}{8 \pi } \int_{\partial D} \partial _\nu u = O\left(\frac{\varepsilon^6}{\omega^2 - \omega_M^2} \right),$$
% then (\ref{eq:dudnuAsympExp}) simplifies as:
%\begin{equation}
%\label{eq:dudnuAsympExp-2}
%(1 - \frac{\gamma k_1 \rho_0 }{ \rho_1} +  \omega^2 k^{-1}_1 A_{\partial D} )\int_{\partial D} \partial _\nu u =  - \omega^2 (\rho_0 / k_0) \vert D \vert u^i(z) +  O\left(\frac{\varepsilon^4}{\omega^2 - \omega_M^2} \right).
%\end{equation}
%Hence, rewriting $1-\gamma \dfrac{k_1 \rho_0}{\rho_1}=\dfrac{\rho_0 k_1}{\rho_1 k_0}=\dfrac{\kappa^2_0}{\kappa^2_1}$, therefore  \eqref{eq:dudnuAsympExp-2} can be written as 
%$$\frac{\kappa^2_0}{\kappa^2_1}(1-\frac{\omega^2}{\omega^2_M})\int_{\partial D} \partial _\nu u =  - \kappa_0^2 \vert D \vert u^i(z) +O\left(\frac{\varepsilon^4}{\omega^2 - \omega_M^2} \right).$$
% Then
Recall $(\ref{kappa1/kappa0})$ and rewrite it, using the a priori estimate given by $(\ref{eq:u_estimate_1})$ and $(\ref{eq:u_estimate_2})$, as 
%\begin{equation*}
%\int_{\partial D} \partial_\nu u = \frac{\kappa_{1}^{2}}{\kappa_{0}^{2}} \; \frac{\omega^2 \omega_M^2}{(\omega^2 - \omega_M^2)} |D| u^i(z) + O \left( \frac{\varepsilon^4}{\left( \omega^2 - \omega_M^2 \right)^{2}}  \right),
%\end{equation*}
%and if we assume that $\rho_{0}/k_{0} \sim 1$, we obtain
\begin{equation}\label{cv}
\int_{\partial D} \partial _\nu u \, d\sigma =  \frac{\kappa^2_1 \omega^2_M}{\left( \omega^2-\omega^2_M \right)}\vert D \vert u^i(z) +\mathcal{O}\left(\dfrac{\varepsilon^4}{\left( \omega^2 - \omega_M^2 \right)^{2}} \right).
\end{equation}
We have from $(\ref{eq:scatteredWaveAsymp})$
\begin{eqnarray*}
u^s(x) &=&  - \left(\alpha + \frac{\gamma k_1}{\rho_1}\right) G_\omega(x-z) \int_{\partial D} \frac{\partial u}{\partial \nu}(y) \, d\sigma(y) +   \mathcal{O}\left(\varepsilon^{5/2} \Vert u \Vert_{L^2(D)} + \alpha \varepsilon^{2} \Big\Vert \frac{\partial u}{\partial \nu} \Big\Vert_{L^2(\partial D)}\right)\\ 
&\stackrel{(\ref{cv})}{=}& - \left(\alpha + \frac{\gamma k_1}{\rho_1}\right) G_\omega(x-z) \left[ \frac{\kappa^2_1 \omega^2_M}{\left( \omega^2-\omega^2_M \right)}\vert D \vert u^i(z) + \mathcal{O}\left(\dfrac{\varepsilon^4}{\left( \omega^2 - \omega_M^2 \right)^{2}} \right) \right] +   O\left( \frac{\varepsilon^{2}}{\left( \omega^2 - \omega_M^2 \right)} \right)
\end{eqnarray*}
and the fact that $\alpha + \gamma k_1 / \rho_1=\rho^{-1}_1 +\mathcal{O}(1)$, and $\rho_1=\overline{\rho}_{1} \varepsilon^{2}$, we rewrite the last formula as 
\begin{eqnarray}\label{usctecase}
\nonumber
u^{s}(x) &=& - \left( \frac{1}{\rho_{1}} + \mathcal{O}(1) \right) G_\omega(x-z) \left[ \frac{\kappa^2_1 \omega^2_M}{\left( \omega^2-\omega^2_M \right)}\vert D \vert u^i(z) +O\left(\dfrac{\varepsilon^4}{\left( \omega^2 - \omega_M^2 \right)^{2}} \right) \right] +   O\left( \frac{\varepsilon^{2}}{\left( \omega^2 - \omega_M^2 \right)} \right)\\
%\end{equation*}
%\begin{equation}
 &=& - \frac{\omega^2 \, \omega^2_M}{ \overline{k}_1 (\omega^2-\omega^2_M)}\vert B \vert\; \varepsilon \; G_\omega(x-z)\; u^i(z) + O\left(\dfrac{\varepsilon^2}{\left( \omega^2 - \omega_M^2 \right)^{2}} \right),
\end{eqnarray}
for $x$ away from $D$ and $\varepsilon / (\omega^2 - \omega_M^2)$ small enough.
\begin{remark}\label{v=uinctecase}
Recall that, in constant coefficients case, we have $v(\cdot,\theta,\omega) = u^{i}(\cdot,\theta,\omega)$ and the equation $(\ref{scat-first-regime})$ is the same as $(\ref{usctecase})$. 
\end{remark}
Now, $(\ref{scat-first-regime})$ is proved, we deduce the corresponding far field   
\begin{equation*}
 u^{\infty}(\hat{x},\theta ,\omega) = v^{\infty}(\hat{x},\theta,\omega)- 
\frac{\omega^{2} \, \omega^{2}_{M}}{\overline{k}_{1}(\omega^{2}-\omega^{2}_{M})} \vert B \vert \; \varepsilon \; G^{\infty}_{\omega}(\hat{x}, z) \, v(z,\theta,\omega) + \mathcal{O}\left( \frac{\varepsilon^{2}}{\left( \omega-\omega_{M} \right)^{2}}\right)
\end{equation*}
and using the mixed reciprocity relation $G_{\omega}^{\infty}(\hat{x}, z) = v(-\hat{x}, z, \omega)$ we obtain 
\begin{equation*}
 u^{\infty}(\hat{x},\theta ,\omega) = v^{\infty}(\hat{x},\theta,\omega)- 
\frac{\omega^{2} \, \omega^{2}_{M}}{\overline{k}_{1}(\omega^{2}-\omega^{2}_{M})} \vert B \vert \; \varepsilon \; v(-\hat{x},z,\omega) \, v(z,\theta,\omega) + \mathcal{O}\left( \frac{\varepsilon^{2}}{\left( \omega-\omega_{M} \right)^{2}}\right).
\end{equation*}
This proves $(\ref{farfield-first-regime})$ and we finish the proof of Theorem \ref{Theorem-Using-Minnear-Resonance}.

\bigskip

\subsection{Variable coefficients}\label{sect:Minnaert:VarCoeff}
Let us suppose now that the  coefficients $ \rho_0, k_0$ vary smoothly depending on the position while in a bounded domain $\Omega$, and that they are constant outside $\Omega$. We warn the reader that we keep the same notations as in the case of constant coefficients and we shall denote by $G_\omega$ the fundamental solution satisfying \eqref{eq:defFundSol} with these variable coefficients. In this case, the Lippmann-Schwinger equation writes as 
\begin{equation}\label{LSEVarCoeff}
 u( x) -  \underset{x}{\div} \int_D \left( \frac{1}{\rho_1} - \frac{1}{\rho_0}(y) \right) G_\omega (x-y) \nabla u(y) dy - \omega^{2} \, \int_D  \left( \frac{1}{k_1} - \frac{1}{k_0}(y) \right) G_\omega (x-y)  u(y) dy = v(x).
\end{equation}
We denote by  
\begin{equation*}
\mathbf{I} := \underset{x}{\div}  \int_D \left( \frac{1}{\rho_1} - \frac{1}{\rho_0}(y) \right) G_{\omega}(x-y) \nabla u (y) d y = - \int_D \left( \frac{1}{\rho_1} - \frac{1}{\rho_0}(y) \right) \underset{y}{\nabla} G_\omega(x-y)  \cdot \nabla u(y) dy 
\end{equation*}
%\begin{equation*}
%\nabla_x \cdot  \int_D (1/\rho_1 - 1/\rho_0(y)) G_\omega(x-y) %\nabla u (y) d y = - \int_D (1/\rho_1  - 1/\rho_0(y)) \nabla_y %G_\omega(x-y)  \cdot \nabla u(y) dy := \mathbf{I} 
%\end{equation*}
moreover we can write it, using integration by parts identities, as 
\begin{eqnarray*}
 \mathbf{I}  &=& \int_D G_\omega (x-y) \, \div \left( \left(\frac{1}{\rho_1} - \frac{1}{\rho_0}(y)\right) \nabla u (y) \right) dy - \int_{\partial D} G_\omega(x-y) \left(\frac{1}{\rho_1} - \frac{1}{\rho_0}(y)\right) \partial_\nu u(y) dy \\
&=& \frac{1}{\rho_1} \int_D G_\omega (x-y) \Delta u(y) dy - \int_D G_\omega (x-y) \; div  \left(\frac{1}{\rho_0}(y) \nabla u(y)\right) dy - \int_{\partial D} G_\omega(x-y) \left(\frac{1}{\rho_1} - \frac{1}{\rho_0}(y)\right) \partial_\nu u(y) dy. 
\end{eqnarray*}
Recall that, on $D$, we have $\Delta u + \kappa_{1}^{2} u = 0$ and use this to write the previous equation as 
\begin{eqnarray*}
\mathbf{I} &=&  \omega^2  \int_D \left( \frac{\rho_{1}}{k_{1} \, \rho_{0}}(y) - \frac{1}{k_{1}} \right) \; G_\omega (x-y) u(y) dy - \int_D G_\omega (x-y) \nabla  \frac{1}{\rho_0}(y) \cdot  \nabla u(y) dy  \\
&-& \int_{\partial D} G_\omega(x-y) \left(\frac{1}{\rho_1} - \frac{1}{\rho_0}(y)\right) \partial_\nu u(y) dy. 
%&=& - \omega^2 \int_D \gamma(y) G_\omega(x-y) u(y) dy - \int_D G_\omega(x-y) \nabla  (1/\rho_0(y)) \cdot \nabla u(y) dy - \int_{\partial D} G_\omega (x-y) (1/\rho_1 - 1/\rho_0(y)) \partial_\nu u(y) dy,
\end{eqnarray*}
Plugging the new expression of $\mathbf{I}$ onto the Lippmann-Schwinger equation $(\ref{LSEVarCoeff})$, we obtain 
\begin{eqnarray}\label{LSED}
\nonumber
u( x)  &-&  \omega^2 \int_D \gamma(y) \; G_\omega (x-y) u(y) dy + \int_D G_\omega (x-y) \nabla  \frac{1}{\rho_0}(y) \cdot  \nabla u(y) dy \\ &+& \int_{\partial D} G_\omega(x-y) \left(\frac{1}{\rho_1} - \frac{1}{\rho_0}(y)\right) \partial_\nu u(y) dy  = v(x), \; \text{where} \; \gamma(y):=  \frac{-1}{k_{0}(y)} + \frac{\rho_{1}}{k_{1} \; \rho_{0}(y)}.
\end{eqnarray}
%Then the Lippman-Schwinger equation becomes
%\begin{eqnarray}
%\label{eq:VarCoeff:LippSchwing}
%u(x) &+& \omega^2 \int_D\gamma(y) G_\omega(x-y) u(y) dy - \int_D G_\omega (x-y) \nabla  (1/\rho_0(y))  \cdot \nabla u(y) dy \\
% &+& \int_{\partial D} G_\omega(x-y) (1/\rho_1 - 1/\rho_0(y)) \partial_\nu u(y) d y 
% = u^i(x).
%\end{eqnarray}
By taking the normal derivative from inside we deduce the corresponding integral equation on the boundary. More precisely, for $x \in \partial D$, we have
%\begin{eqnarray}
%\label{eq:VarCoeff:LippSchwing:nabla}
% \nabla u(x) &+& \omega^2 \nabla \int_D \gamma(y) G_\omega(x-y) u(y) dy - \nabla \int_D G_\omega (  x-y) \nabla  ( 1/\rho_0(y)) \cdot \nabla u(y) dy \\
% &+&  \nabla \int_{\partial D} G_\omega(x-y) (1/\rho_1 - 1/\rho_0(y)) \partial_\nu u(y) dy 
%= \nabla u^i(x).
%\end{eqnarray}
\begin{eqnarray}\label{eq:VarCoeff:LippSchwing:nabla}
\nonumber
\left[ 1 + \frac{\rho_0(x)}{2} \left( \frac{1}{\rho_1} - \frac{1}{\rho_0}(x)\right) \right] \partial_{\nu} u( x)  &-&  \omega^2 \partial_{\nu} \int_D \gamma(y) \; G_\omega (x-y) u(y) dy + \partial_{\nu} \int_D G_\omega (x-y) \nabla  \frac{1}{\rho_0}(y) \cdot \nabla u(y) dy \\ &+& \left( K_D^\omega \right)^* \left[ \left(\frac{1}{\rho_1} - \frac{1}{\rho_0}(\cdot) \right) \partial_\nu u(\cdot) \right](x)  = \partial_{\nu} v(x).
\end{eqnarray}
We use the Lippmann-Schwinger equation to derive an expression for the scattered field. To do this, for $x$ away from $D$ and $y$ such that $ \Vert y - z \Vert \sim \varepsilon$, we expand near $z$ the equation $(\ref{LSED})$ to obtain 
\begin{eqnarray}\label{u^s-varcoeff}
\nonumber
u^s(x) &=& v^{s}(x) + \omega^2 G_\omega (x-z) \, \gamma(z) \, \int_D  u(y) dy - G_\omega(x-z) \int_D \nabla \frac{1}{\rho_0}(y)  \cdot  \nabla u(y) dy \\
&-& G_\omega(x-z) \, \left( \frac{1}{\rho_1} - \frac{1}{\rho_0}(z) \right) \, \int_{\partial D}  \partial_\nu u(y) d\sigma(y) + \mathcal{O}\left(  \varepsilon^{\frac{5}{2}} \Vert u \Vert_{H^1(D)} + \frac{\varepsilon^2 }{\rho_1}  \Vert \partial_\nu u \Vert_{L^2(\partial D)} \right).
\end{eqnarray}
We know that 
\begin{eqnarray}\label{nabla-rho-0-nabla-u}
\nonumber
\int_D \nabla  \frac{1}{\rho_0}  \cdot  \nabla u dy &=& -\int_{D} \frac{1}{\rho_0}\Delta udy +\int_{\partial D}\frac{1}{\rho_0} \partial_\nu u d\sigma(y) = \frac{\omega^2 \, \rho_{1}}{k_{1}} \int_{D} \frac{1}{\rho_0}udy +\int_{\partial D}\frac{1}{\rho_0} \partial_\nu u d\sigma(y)\\
\nonumber
&=& \frac{\omega^2 \, \rho_1}{k_1 \rho_0(z)}\int_{D}udy + \frac{\omega^{2} \, \rho_{1}}{k_{1}} \nabla \frac{1}{\rho_{0}(z)} \cdot \int_{D}  (y-z) \, u (y) dy + \frac{1}{\rho_0(z)}\int_{\partial D} \partial_\nu u d\sigma(y)
\\ 
&+& \mathcal{O}\left( \varepsilon^\frac{7}{2} \Vert u \Vert_{L^2(D)}+ \varepsilon^2  \Vert \partial_\nu u \Vert_{L^2(\partial D)} \right).
\end{eqnarray}
We estimate the middle term by composing $(\ref{LSED})$ with $(\cdot - z)$ and integrating over $D$ as follow 
\begin{eqnarray*}
\int_{D} (y-z) \, u(y) dy &=& - \omega^{2} \, \int_{D} (y-z) \, N^{\omega}(\gamma \, u)(y) dy -  \, \int_{D} (y-z) \, N^{\omega}\left( \nabla \rho^{-1}_{0} \cdot \nabla u \right)(y) dy \\ 
&+& -  \, \int_{D} (y-z) \, S^{\omega}\left( \left( \frac{1}{\rho_{1}} - \frac{1}{\rho_{0}(\cdot)} \right) \, \partial_{\nu} u(\cdot) \right)(y) dy + \int_{D} (y-z) \, v(y) dy, \\
\left\vert \int_{D} (y-z) \, u(y) dy \right\vert & \lesssim & \varepsilon^{\frac{9}{2}} \, \Vert u \Vert_{L^{2}(D)} + \varepsilon^{\frac{9}{2}} \, \Vert \nabla u \Vert_{L^{2}(D)} + \varepsilon^{2} \, \Vert \partial_{\nu} u \Vert_{L^{2}(\partial D)} + \varepsilon^{4} 
\end{eqnarray*}  
and plug these estimates in $(\ref{nabla-rho-0-nabla-u})$ to obtain
\begin{eqnarray}\label{gradrho0gradu}
\nonumber
\int_D \nabla  \frac{1}{\rho_0}  \cdot  \nabla u dy &=& \frac{\omega^2 \, \rho_1}{k_1 \rho_0(z)}\int_{D}udy +  \frac{1}{\rho_0(z)}\int_{\partial D} \partial_\nu u d\sigma(y)
+ \mathcal{O}\left( \varepsilon^\frac{7}{2} \Vert u \Vert_{H^{1}(D)}+ \varepsilon^2  \Vert \partial_\nu u \Vert_{L^2(\partial D)} + \varepsilon^{4} \right)\\
& \stackrel{(\ref{intDint})}{=} & \mathcal{O}\left( \varepsilon^\frac{7}{2} \Vert u \Vert_{H^{1}(D)}+ \varepsilon^2  \Vert \partial_\nu u \Vert_{L^2(\partial D)} + \varepsilon^{4} \right).
\end{eqnarray}
Then, the equation $(\ref{u^s-varcoeff})$ takes the following form 
\begin{eqnarray}\label{ibra}
\nonumber
u^s(x)-v^{s}(x) &=& - G_\omega (x-z) \left[ - \omega^{2}  \gamma(z) \int_D  u dy + \frac{1}{\rho_1}  \int_{\partial D}  \frac{\partial u}{\partial \nu} d \sigma  \right] + \mathcal{O}\left(  \varepsilon^{\frac{5}{2}} \Vert u \Vert_{H^1(D)} + \frac{\varepsilon^2 }{\rho_1}  \Vert \partial_\nu u \Vert_{L^2(\partial D)} + \varepsilon^{4} \right) \\
%\end{equation*}
%recall, see for instance $(\ref{intDint})$, that
%\begin{equation*}
%\int_D u = - \frac{k_1}{\rho_1 \; \omega^2}   \int_{\partial D} \partial_\nu u.
%\end{equation*}
%Hence, 
%\begin{equation*}
 &\stackrel{(\ref{intDint})}{=}& - \frac{1}{\rho_{1}}  \,  G_\omega (x-z)  \, \int_{\partial D}  \partial_\nu u(y) \, d \sigma(y) + \mathcal{O}\left( \varepsilon^{\frac{5}{2}} \Vert u \Vert_{H^1(D)} + \frac{\varepsilon^2 }{\rho_1}  \Vert \partial_\nu u \Vert_{L^2(\partial D)} + \varepsilon^{4} \right). 
\end{eqnarray}
%again since $\int_D u = - k_1 / (\rho_1 \omega^2) \int_{\partial D} \partial_\nu u .$
%Therefore
%$$u^s(x) =  1/\rho_1 G_\omega(x-z) \int_{\partial D} \partial_\nu u(y) dy + O(\varepsilon^{5/2} |u|_{H^1(D)}^2 + (\varepsilon^2 / \rho_1) |\partial_\nu u|_{L^2(\partial D)}).$$
In the next proposition, similarly to Proposition $(\ref{prop:uAsympExp})$, we estimate the error terms appearing above.   
%To simplify the notation, in this section we suppose that $\o^2 - \o^2_M = \Omega(\e^2).$
%If this is not the case, we can easily adapt all the derived equations by dividing by $\o^2 - \o^2_M $ the asymptotic remainders, and obtain similar formulae to the ones in Section \ref{sect:Minnaert:ConstCoeff}.
\begin{proposition}\label{prop:uAsympExp:VarCoeff}
For $u = u^i + u^s $, the solution of $(\ref{eq:acoustic_scattering})$, it holds
\begin{equation}\label{eq:u_estimate_2:VarCoeff}
\Vert \partial_\nu u \Vert_{L^{2}(\partial D)} = \mathcal{O}\left(\frac{\varepsilon^{2}}{\omega^2 - \omega_M^2} \right)
\end{equation}
and
\begin{equation}\label{eq:u_estimate_3:VarCoeff}
\phantom{Vide} \; \, 
\Vert u \Vert_{H^{1}(D)} = \mathcal{O} \left(\frac{\varepsilon^{\frac{1}{2}}}{\omega^2 - \omega_M^2} \right).
\end{equation}
under the condition that $\dfrac{\varepsilon}{\omega^2 - \omega_M^2}$ is small enough.
\end{proposition}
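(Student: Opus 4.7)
The strategy is to mirror the constant-coefficient argument of Proposition \ref{prop:uAsympExp}, using the volume Lippmann--Schwinger identity \eqref{LSED} and its boundary counterpart \eqref{eq:VarCoeff:LippSchwing:nabla} as the two working equations. The novelties are: \emph{(i)} $G_\omega$ is now the fundamental solution with variable coefficients, \emph{(ii)} the extra volume contribution $\int_D G_\omega(x-y)\nabla\rho_0^{-1}(y)\cdot\nabla u(y)\,dy$ forces control on $\nabla u$ in $L^2(D)$, hence the upgrade from an $L^2$ bound to an $H^1$ bound, and \emph{(iii)} one has to freeze $\rho_0$ and $k_0$ at $z$, writing $\rho_0(y)=\rho_0(z)+\mathcal{O}(\varepsilon)$ for $y\in D$, which produces additional but controllable perturbations.

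First I isolate the Minnaert denominator. Integrating \eqref{eq:VarCoeff:LippSchwing:nabla} on $\partial D$, expanding $(K_D^\omega)^\ast$ in powers of $\omega|x-y|$ as in \eqref{KwK0}, using \eqref{intDint} to trade $\int_D u$ against $\int_{\partial D}\partial_\nu u$, and freezing the coefficients at $z$, I expect an identity of the same algebraic form as \eqref{eq:dudnuAsympExp} but with $\rho_0$ replaced by $\rho_0(z)$. After multiplication by $\alpha\sim\varepsilon^{-2}$, the leading zero in $\omega^{2}$ sits precisely at $\omega_M^2=8\pi\overline{k}_1/(\rho_0(z)\,\mu_{\partial B})$. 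The extra integral $\int_D G_\omega\,\nabla\rho_0^{-1}\cdot\nabla u$ is estimated by Cauchy--Schwartz together with the Newtonian scaling, contributing a perturbation of order $\varepsilon^{7/2}\|\nabla u\|_{L^2(D)}$, which fits inside the error regime. This reproduces the analog of \eqref{kappa1/kappa0}.

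For the a priori bounds I take the $L^2(D)$ norm of \eqref{LSED} and of its gradient. The relevant scaling lemmas, $\|N_D^\omega\|_{\mathcal L(L^2(D))}\lesssim\varepsilon^{2}$, $\|\nabla N_D^\omega\|_{\mathcal L(L^2(D))}\lesssim\varepsilon$, $\|S_D^\omega\|_{\mathcal L(L^2(\partial D),L^2(D))}\lesssim\varepsilon^{3/2}$ and $\|\nabla S_D^\omega\|_{\mathcal L(L^2(\partial D),L^2(D))}\lesssim\varepsilon^{1/2}$, follow by rescaling to the fixed inclusion $B$ exactly as in \eqref{chgvarsl}. For $\varepsilon$ small the self-coupling terms in $u$ and $\nabla u$ can be absorbed, giving schematically
\[
\|u\|_{L^2(D)}\lesssim\varepsilon^{3/2}+\alpha\,\varepsilon^{3/2}\|\partial_\nu u\|_{L^2(\partial D)},\qquad
\|\nabla u\|_{L^2(D)}\lesssim\varepsilon^{1/2}+\alpha\,\varepsilon^{1/2}\|\partial_\nu u\|_{L^2(\partial D)}.
\]
On the boundary side I apply $T=(\rho_0(z)/2+1/\alpha+(K_D^\omega)^\ast)^{-1}$ to \eqref{eq:VarCoeff:LippSchwing:nabla}, split the right-hand side into its $\partial D$-average and mean-zero parts, and use \eqref{eq:proof:apriori:IKinv:1}--\eqref{eq:proof:apriori:IKinv:2} together with the scaling \eqref{normpartialNu}. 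Combining this with the isolated Minnaert identity yields, as in the constant-coefficient case,
\[
\|\partial_\nu u\|_{L^2(\partial D)}\lesssim\frac{\varepsilon^{5/2}\|u\|_{H^1(D)}+\varepsilon\|\partial_\nu u\|_{L^2(\partial D)}}{\omega^{2}-\omega_M^2}+\frac{\varepsilon^{2}}{\omega^{2}-\omega_M^2}.
\]
Under the smallness assumption $\varepsilon/(\omega^{2}-\omega_M^2)\ll 1$, this first gives \eqref{eq:u_estimate_2:VarCoeff}; feeding it back into the two volume bounds then yields \eqref{eq:u_estimate_3:VarCoeff}.

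The main obstacle I anticipate is closing the bootstrap in the presence of the new gradient term: $\|\nabla u\|_{L^2(D)}$ cannot be absorbed into $\|u\|_{L^2(D)}$ without losing a power of $\varepsilon$, so the argument really hinges on the sharp scalings $\nabla N_D^\omega=O(\varepsilon)$ and $\nabla S_D^\omega=O(\varepsilon^{1/2})$. With these sharp scalings, the two integral identities form a diagonally dominant linear system for the triple $(\|u\|_{L^2(D)},\|\nabla u\|_{L^2(D)},\|\partial_\nu u\|_{L^2(\partial D)})$ as $\varepsilon\to 0$, and the quantitative dependence on the Minnaert denominator survives intact from the constant-coefficient analysis.
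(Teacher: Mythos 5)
Your volumetric part is essentially the paper's: the operators $N$, $M$, $S$, the coupled $L^2$ bounds for $u$ and $\nabla u$, and the resulting bound $\Vert u\Vert^{2}_{H^1(D)}\lesssim \varepsilon^{-3}\Vert\partial_\nu u\Vert^{2}_{L^2(\partial D)}+\varepsilon^{3}$ as in \eqref{eq:proof:VarCoeff:uestimate:H1bound} all match, and your scalings for $N_D^\omega$, $\nabla N_D^\omega$, $S_D^\omega$, $\nabla S_D^\omega$ are the right ones.

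The gap is in the boundary step, which is the heart of the variable-coefficient case. In \eqref{eq:VarCoeff:LippSchwing:nabla} the zeroth-order coefficient is $1+\tfrac{\rho_0(x)}{2}\alpha(x)=\tfrac12+\tfrac{\rho_0(x)}{2\rho_1}$ and the operator $(K_D^\omega)^*$ acts on $\alpha(\cdot)\,\partial_\nu u$, with $\alpha(x)=\rho_1^{-1}-\rho_0^{-1}(x)$ variable. Your plan — freeze everything at $z$ and apply $T=(\rho_0(z)/2+1/\alpha+(K_D^\omega)^*)^{-1}$ with \eqref{eq:proof:apriori:IKinv:1}--\eqref{eq:proof:apriori:IKinv:2} — leaves the multiplication error $\tfrac{\rho_0(x)-\rho_0(z)}{2\rho_1}\,\partial_\nu u$, whose absolute size is $O(\varepsilon/\rho_1)=O(\varepsilon^{-1})$ since $\rho_1=\overline{\rho}_1\varepsilon^2$. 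This error does not have zero average on $\partial D$, so its component along constants is hit by the full-$L^2$ bound $\Vert T\Vert\lesssim\alpha$, and after the overall factor $\alpha^{-1}$ it contributes a term of order $\varepsilon^{-1}\Vert\partial_\nu u\Vert_{L^2(\partial D)}$ (even using the gradient structure of $\rho_0(x)-\rho_0(z)$ and Cauchy--Schwartz one does not do better), which cannot be absorbed no matter how small $\varepsilon/(\omega^2-\omega_M^2)$ is; note also that such a bootstrap cannot invoke the eventual smallness of $\int_{\partial D}\partial_\nu u$, which is only established afterwards. This is precisely why the paper does not freeze $\rho_0(x)$ there: it divides by $\rho_0(x)\alpha(z)$, keeps the variable weight inside the new operator $B=\bigl(\rho_0^{-1}(x)\alpha^{-1}(z)+\tfrac12\bigr)I+\large{\text{J}}_{\omega}^{\,*}$ with $\large{\text{J}}_{\omega}^{\,*}=\rho_0^{-1}(x)(K_D^\omega)^*$, proves its invertibility together with $\Vert B^{-1}\Vert_{\mathcal L(L^2)}\lesssim\alpha$ and $\Vert B^{-1}\Vert_{\mathcal L(L^2_0)}=O(1)$ (Lemma \ref{lemma-B}), and only treats $\alpha(\cdot)-\alpha(z)=O(\varepsilon)$ perturbatively, giving the harmless $O(\varepsilon)\Vert\partial_\nu u\Vert$ terms in \eqref{RCB}. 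This is also exactly where the hypothesis $\overline{\rho}_1>\underset{\Omega}{\max}\,\rho_0$ is used — a hypothesis your argument never invokes, which signals the missing step. Relatedly, your derivation of the resonant denominator ``as in \eqref{KwK0}'' presupposes an explicit kernel; with variable coefficients one must first compare $G_\omega$ with the frozen $\Gamma_\omega$ (boundedness of $G_\omega-\Gamma_\omega$ and the identity \eqref{ha}) and use $\large{\text{J}}_{0}(1)=-1/2$ together with \eqref{jw-j0} (Lemma \ref{lemma1}) to recover $\mu_{\partial D}$ and hence $\omega_M^2(z)=8\pi\overline{k}_1/(\rho_0(z)\mu_{\partial B})$. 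Without these two ingredients the boundary estimate \eqref{eq:u_estimate_2:VarCoeff}, and therefore the bootstrap to \eqref{eq:u_estimate_3:VarCoeff}, is not established.
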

\begin{proof}
Let the two volumetric operators $N:L^2(D) \to L^2(D)$ and $M : \left( L^2(D) \right)^{3} \to L^2(D)$ defined by
\begin{equation*}
N(\varphi)(x) := \int_{D} G_{\omega}(x-y) \, \gamma(y) \, \varphi(y) \, dy \quad \text{and} \quad M(F)(x) := \int_{D} G_{\omega}(x-y) \, \nabla \frac{1}{\rho_0}(y) \cdot F(y) \, dy. 
\end{equation*}
Notice that, for $\varepsilon$ small, the operator norms of $N$ and $M$ and their derivatives all go to zero, thus for any fixed coefficient $\lambda$, the operators $I + \lambda N, I+\lambda M, I + \lambda \nabla N$ and $I+\lambda \nabla M$ are invertible, and their inverse have norm bounded by a constant $C^{te}$ independent from $\varepsilon$.
Also, we define a boundary integral operators $S$ from $L^2(\partial D)$ to  $L^2(D)$  as
\begin{equation*}
S(\psi)(x) := \int_{\partial D} G_\omega(x-y) \left(\frac{1}{\rho_1} - \frac{1}{\rho_0}(y) \right) \psi(y) \, d\sigma(y) % \;\, \text{and} \;\, K^*(\psi)(x) := \int_{\partial D} \frac{\partial G_\omega(x-y)}{\partial \nu (x)}  \left(\frac{1}{\rho_1} - \frac{1}{\rho_0}(y) \right) \psi(y) d\sigma(y).
\end{equation*}
Next, we estimate $u$ with $H^{1}$-norm. First, we use $(\ref{LSED})$ to write the integral equation satisfied by $u$ as follows  
\begin{equation}\label{intequau}
u = \left( I - \omega^{2} \, N \right)^{-1} \; \left[ -  M(\nabla u) -  S(\partial_{\nu} u)  +  v \right] 
\end{equation}
and taking the gradient of $(\ref{LSED})$, we get
\begin{equation*}
 \nabla u =  \left(I + \nabla M \right)^{-1} \; \left[   \omega^2 \, \nabla N(u)  - \nabla S(\partial_{\nu} u)  + \nabla v \right].
\end{equation*}
We take the $L^{2}$-norm to obtain
\begin{eqnarray}\label{normgradu}
\nonumber
\Vert \nabla u \Vert_{L^2(D)} &=&  \left\Vert \left(I + \nabla M \right)^{-1} \; \left[   \omega^2 \, \nabla N(u)  - \nabla S(\partial_{\nu} u)  + \nabla v \right] \right\Vert_{L^2(D)}\\ \nonumber
 &\leq & C^{te} \left[ \omega^2 \left\Vert \nabla N(u) \right\Vert_{L^2(D)} + \left\Vert \nabla S ( \partial_\nu u ) \right\Vert_{L^2(D)} + \left\Vert \nabla v \right\Vert_{L^2(D)} \right] \\
 &\leq & C^{te} \left(  \varepsilon \; \Vert u \Vert _{L^{2}(D)} + \frac{\varepsilon^{\frac{1}{2}}}{\rho_1} \Vert \partial_\nu u \Vert_{L^{2}(\partial D)} + \Vert \nabla v \Vert_{L^{2}(D)} \right),
\end{eqnarray}
where we used the change of  variable techniques as in  $(\ref{chgvarn})$ and $(\ref{chgvarsl}$) and remarking that the leading term\footnote{To exhibit the leading term of $S$ we should take into account the fact that $\rho_{1}$ (respectively, $\rho_{0}$) are constant function (respectively, smooth function) on the spatial variable and behaving as $\varepsilon^{-2}$ (respectively, independent on $\varepsilon$).} of $S(\psi)(\cdot)$ is given by $\rho_{1}^{-1} \, \int_{\partial D} G_{\omega}(\cdot-y) \psi(y) d\sigma(y)$. \\ 
From $(\ref{intequau})$, we also have
\begin{eqnarray*}
\Vert u \Vert_{L^{2}(D)} & \leq & \Vert (I - \omega^2 N)^{-1} \Vert _\mathcal{L}  \left( \Vert M \left( \nabla u \right) \Vert_{L^{2}(D)} + \Vert S \left( \partial_\nu u \right) \Vert_{L^{2}(D)} + \Vert v \Vert_{L^{2}(D)} \right) \\ 
& \leq & C^{te} \left( \varepsilon^{2} \Vert \nabla u \Vert_{L^{2}(D)} + \frac{\varepsilon^{\frac{3}{2}}}{\rho_1}  \Vert \partial_\nu u \Vert_{L^{2}(\partial D)} + \Vert v \Vert_{L^{2}(D)} \right).
\end{eqnarray*}
Substituting $(\ref{normgradu})$, this becomes
\begin{equation}\label{eq:proof:VarCoeff:uestimates:2}
 \Vert u \Vert_{L^{2}(D)} \leq C^{te} \left[ \frac{\varepsilon^{\frac{3}{2}}}{\rho_1}   \Vert \partial_\nu u \Vert_{L^2(\partial D)} + \varepsilon^{2} \, \Vert \nabla v \Vert + \Vert v \Vert\right] = C^{te} \; \varepsilon^{-\frac{1}{2}} \Vert \partial_\nu u \Vert_{L^2(\partial D)} + \mathcal{O}\left( \varepsilon^{\frac{3}{2}} \right). 
 \end{equation}
Putting together  $(\ref{normgradu})$ and $(\ref{eq:proof:VarCoeff:uestimates:2}$), we obtain
\begin{equation}\label{eq:proof:VarCoeff:uestimate:H1bound}
\Vert u \Vert^{2}_{H^{1}(D)} \leq C^{te} \left(\varepsilon^{-3} \; \Vert \partial_\nu u \Vert^{2}_{L^{2}(\partial D)}  + \varepsilon^{3} \right).
\end{equation}
To estimate  $\Vert \partial_\nu u \Vert_{L^{2}(\partial D)}$, we rewrite  $(\ref{eq:VarCoeff:LippSchwing:nabla})$, where we denote by $ \alpha(z) := \rho^{-1}_{1} - \rho^{-1}_{0}(z)$, as follows
\begin{equation*}
\left[ \left( \rho_{0}^{-1}(x) \, \alpha^{-1}(z) + \frac{1}{2}  \right) I +  \rho_{0}^{-1}(x) \left( K_D^\omega \right)^* \right]  \left( \partial_\nu u \right)(x) = \rho_{0}^{-1}(x) \, \alpha^{-1}(z) \, \partial_{\nu} v(x) +  \omega^2 \, \rho_{0}^{-1}(x) \, \alpha^{-1}(z) \, \partial_{\nu} N(u)(x) 
\end{equation*}
\begin{eqnarray}\label{invB}
\nonumber
\phantom{Empty} \qquad \qquad \qquad &+& \rho_{0}^{-1}(x) \, \alpha^{-1}(z) \, \partial_{\nu} M \left( \nabla u \right)(x) + \frac{\alpha^{-1}(z)}{2} \int_{0}^{1} (x-z) \cdot \nabla \frac{1}{\rho_{0}}(z+t(x-z)) \, 
 dt \, \partial_{\nu} u( x)\\
&+& \rho_{0}^{-1}(x) \, \alpha^{-1}(z) \, \left( K_D^\omega \right)^* \left[ \int_{0}^{1} (\cdot - z) \cdot \nabla \frac{1}{\rho_{0}}(z+t(\cdot - z))  \, dt \,  \partial_\nu u(\cdot) \right](x)
\end{eqnarray}
To estimate $\partial_\nu u$, we need to invert the operator $\left[ \left( \rho_{0}^{-1}(\cdot) \, \alpha^{-1}(z) + \frac{1}{2}  \right) I +  \rho_{0}^{-1}(\cdot) \left( K_D^\omega \right)^* \right]$. The next lemma gives more precisions about the well posedness of this operation.  
\begin{lemma}\label{lemma1}
Set  $\large{\text{J}}_{\omega}$ the operator defined as
\begin{eqnarray*}
\large{\text{J}}_{\omega} := L^{2}(\partial D) & \rightarrow & L^{2}(\partial D) \\ 
f  & \rightarrow & \large{\text{J}}_{\omega}(f)(x) :=  \int_{\partial D} \rho_{0}^{-1}(y) \, \frac{\partial G_{\omega}}{\partial \nu(y)}(x-y) \, f(y) \, d\sigma(y)
\end{eqnarray*}
then  $\large{\text{J}}_{\omega}^{\; *}(f)(x) = \rho_{0}^{-1}(x) \, \left(H_{D}^{w}\right)^{*}(f)(x)$,  $\large{\text{J}}_{0}(1) = -1/2$ and in addition we have
\begin{equation}\label{jw-j0}
\left(\large{\text{J}}_{\omega} - \large{\text{J}}_{0}\right)(1)(x) = - \kappa_{0}^{2} \, \frac{1}{2} \,  \int_{\partial D} \frac{\nu(y)\cdot (y-x)}{4 \, \pi \vert x - y \vert} d\sigma(y) + \mathcal{O}\left(\varepsilon^{3}\right).  
\end{equation}
\end{lemma}\label{lemma2}
\begin{proof}
See the appendix.
\end{proof}
We need also the following result. 
\begin{lemma}\label{lemma-B}
Set $B$ the operator defined from $L^{2}(\partial D)$ to $L^{2}(\partial D)$ as 
\begin{equation*}
B(f)(x) :=  \left[ \left( \rho_{0}^{-1}(x) \, \alpha^{-1}(z) + \frac{1}{2}  \right) I + \large{\text{J}}_{\omega}^{\; *} \right] (f)(x)   
\end{equation*}
then  $B$ is invertible and in addition  we have 
\begin{equation}\label{normB}
\Vert B^{-1} \Vert_{\mathcal{L}(L^{2}(\partial D))} \lesssim \alpha(z) \quad \text{and} \quad \Vert B^{-1} \Vert_{\mathcal{L}(L_{0}^{2}(\partial D))} = \mathcal{O}(1), \; \mbox{ as } \varepsilon <<1
\end{equation}
if $\overline{\rho}_{1}$ is such that $\rho_{0}(z) < \overline{\rho}_{1}$.
\end{lemma}
\begin{proof}
 See the appendix.
\end{proof}
Using Lemma \ref{lemma-B}, the equation $(\ref{invB})$, after taking the $\mathbb{L}^{2}$ norm, becomes 
\begin{eqnarray}\label{RCB}
\nonumber
\Vert \partial_\nu u \Vert_{\mathbb{L}^{2}(\partial D)} &=& \alpha^{-1}(z) \, \Bigg\Vert \, B^{-1} \, \left( \rho_{0}^{-1}  \, \partial_{\nu} v \right) + \omega^2  \, B^{-1}\left(\rho_{0}^{-1} \, \partial_{\nu} N(u)\right) +  B^{-1} \left(\rho_{0}^{-1} \, \partial_{\nu} M \left( \nabla u \right) \right) \\ \nonumber
&+& \frac{1}{2} \, B^{-1}\left(\int_{0}^{1} (\cdot -z) \cdot \nabla \frac{1}{\rho_{0}}(z+t(\cdot -z)) \, dt \, \partial_{\nu} u \right) \\
&+&  B^{-1} \left( \rho_{0}^{-1} \, \left(K_D^\omega \right)^* \left[ \int_{0}^{1} (\cdot - z) \cdot \nabla \frac{1}{\rho_{0}}(z+t(\cdot - z))  \, dt \,  \partial_\nu u(\cdot) \right] \right) \Bigg\Vert_{\mathbb{L}^{2}(\partial D)}.
\end{eqnarray}
From $(\ref{normB})$, we have
\begin{equation*}
\left\Vert B^{-1}\left(\int_{0}^{1} (\cdot -z) \cdot \nabla \frac{1}{\rho_{0}}(z+t(\cdot -z)) \, dt \, \partial_{\nu} u \right) \right\Vert \leq \alpha(z) \,  \left\vert \int_{0}^{1} (\cdot -z) \cdot \nabla \frac{1}{\rho_{0}}(z+t(\cdot -z)) \, dt \right\vert \, \Vert \partial_{\nu} u \Vert \lesssim \alpha(z) \, \varepsilon \, \Vert \partial_{\nu} u \Vert
\end{equation*}
and similarly, using the continuity of $\left( K_D^\omega \right)^*$, we obtain
\begin{eqnarray*}
\left\Vert B^{-1} \left( \rho_{0}^{-1} \, \left( K_D^\omega \right)^* \left[ \int_{0}^{1} (\cdot - z) \cdot \nabla \frac{1}{\rho_{0}}(z+t(\cdot - z))  \, dt \,  \partial_\nu u(\cdot) \right] \right) \right\Vert & \lesssim & \alpha(z) \, \left\vert \int_{0}^{1} (\cdot -z) \cdot \nabla \frac{1}{\rho_{0}}(z+t(\cdot -z)) \, dt \right\vert \, \Vert \partial_{\nu} u \Vert \\ 
& \lesssim & \alpha(z) \, \varepsilon \, \Vert \partial_{\nu} u \Vert. 
\end{eqnarray*}
The equation $(\ref{RCB})$ can be rewritten as 
\begin{equation}\label{ff}
\Vert \partial_\nu u \Vert_{\mathbb{L}^{2}(\partial D)} \lesssim \alpha^{-1}(z) \, \Bigg\Vert \, B^{-1} \, \left( \rho_{0}^{-1}  \, \partial_{\nu} u^i \right) + \omega^2  \, B^{-1}\left(\rho_{0}^{-1} \, \partial_{\nu} N(u)\right) +  B^{-1} \left(\rho_{0}^{-1} \, \partial_{\nu} M \left( \nabla u \right) \right) \Bigg\Vert_{L^{2}(\partial D)}.
\end{equation}
Next, to estimate the right hand side of $(\ref{ff})$, we proceed in three steps.
\begin{enumerate}
\item[Step 1:] Obviously, we have
\begin{equation*}
\Vert B^{-1} \, \left( \rho_{0}^{-1}  \, \partial_{\nu} v \right) \Vert \leq \left\Vert B^{-1} \, \left( \rho_{0}^{-1} \, \partial_{\nu} v - \frac{1}{\vert \partial D \vert} \int_{\partial D} \rho_{0}^{-1} \, \partial_{\nu} u^i \, d\sigma \right) \right\Vert + \frac{1}{\vert \partial D \vert} \left\vert  \int_{\partial D} \rho_{0}^{-1} \, \partial_{\nu} v \, d\sigma \right\vert  \Vert B^{-1} \, \left(1\right) \Vert
\end{equation*} 
and with help of $(\ref{partialuinc-moy})$, $(\ref{partialuinc})$ and $(\ref{normB})$, we have
\begin{equation*}\label{1stterm}
\Vert B^{-1} \, \left( \rho_{0}^{-1}  \, \partial_{\nu} v \right) \Vert_{L^{2}(\partial D)} = \mathcal{O}(1). 
\end{equation*}
%\begin{equation}\label{eq:proof:VarCoeff:uestimates:3}
%\Vert \partial_\nu u \Vert_{L^{2}(\partial D)} \leq \left\Vert \left( \left[ \frac{\rho_{0}}{2} + \frac{1}{\alpha} \right]  I + (H_{D}^{\omega})^* \right)^{-1} \left( - \omega^2 \partial_{\nu} N(u) - \partial_\nu M (\nabla u)  + \partial_\nu u ^i + \mathcal{O}\left(\varepsilon \, \Vert \partial_{\nu} u \Vert\right) \right) \right\Vert_{L^{2}(\partial D)}. 
%\end{equation}
%From the same estimates for the norm of the operator $(I/\alpha + \rho_0 /2  + K^*)^{-1}$, i.e. \eqref{eq:proof:apriori:IKinv:1}-\eqref{eq:proof:apriori:IKinv:2}, we have that
%\begin{equation}
% \alpha |(I / \alpha + \rho_0 /2 + K^*)^{-1} \partial_\nu u ^i |_{L^2(\partial D)} = O(\varepsilon^2). 
%\end{equation} 
\item[Step 2:] By a change of variables, as in $(\ref{chgvarn})$, and from the continuity of $N_B^{\varepsilon \omega}, M_B^{\varepsilon \omega} : L^{2}(B) \to H^{2} (B)$, we have 
\begin{equation*}
\Big\Vert \rho_{0}^{-1} \, \partial_{\nu_-} N[u] \Big\Vert_{L^{2}(\partial D)}^2  \leq \varepsilon \; C \; \Vert u \Vert_{L^{2}(D)}^{2} \quad \text{and} \quad \Big\Vert \rho_{0}^{-1} \, \partial_{\nu_-} M[ \nabla u] \Big\Vert_{L^{2}(\partial D)}^{2} \leq \varepsilon \; C \; \Vert \nabla u \Vert_{L^{2}(D)}^{2}.
\end{equation*}
Now, we use the same approach as previously by considering separately $\rho_{0}^{-1} \, F - \vert \partial D \vert^{-1} \, \int_{\partial D} \rho_{0}^{-1} \, F$ and $\vert \partial D \vert^{-1} \, \int_{\partial D} \rho_{0}^{-1} \, F$ for $F = \partial_\nu N u$ and $F = \partial_\nu M \nabla u$,  to obtain
\begin{equation*}\label{2ndterm} 
\left\Vert  B^{-1} \left[ \rho_{0}^{-1} \, \partial_\nu N(u) -  \dfrac{1}{|\partial D|} \int_{\partial D} \rho_{0}^{-1} \, \partial_\nu N(u) \, d\sigma \right] \right\Vert_{L^2(\partial D)} \leq \sqrt{\varepsilon} \; C \, \Vert u \Vert_{L^{2}(D)} 
\end{equation*}
and 
\begin{equation*}\label{3rdterm}
\left\Vert  B^{-1} \left[\rho_{0}^{-1} \, \partial_\nu M(\nabla u) -  \dfrac{1}{|\partial D|} \int_{\partial D} \rho_{0}^{-1} \,  \partial_\nu M(\nabla u) \, d\sigma \right] \right\Vert_{L^2(\partial D)} \leq \sqrt{\varepsilon} \, C \, \Vert \nabla u \Vert_{L^2(D)}.
\end{equation*}
\item[Step 3:] We deal with the term 
\begin{equation*}
\int_{\partial D} \rho_{0}^{-1}(x) \, \partial_\nu M (\nabla u)(x) \, d\sigma(x). 
\end{equation*}
Interchanging the integration and using the divergence theorem we get
\begin{eqnarray}\label{Ami3}
\nonumber
\int_{\partial D} \rho_{0}^{-1}(x) \, \partial_\nu M (\nabla u)(x) \, d\sigma &=& \int_D \nabla \frac{1}{\rho_0}(y) \cdot \nabla u(y) \int_D \underset{x}{\div}\left( \rho_{0}^{-1}(x) \,\underset{x}{\nabla}  G_\omega(x-y) \right) dx dy  \\ \nonumber
&\stackrel{(\ref{eq:defFundSol})}{=}& - \int_D \nabla \frac{1}{\rho_0}(x) \cdot \nabla u(x) dx - \omega^{2} \, \int_D \nabla \frac{1}{\rho_0}(y) \cdot \nabla u(y) \int_D  k^{-1}_0(x)  G_\omega(x-y)    dx dy \\ \nonumber
 &=&  \int_{D} \nabla \frac{1}{\rho_0(x)} \cdot \nabla u(x) dx + \mathcal{O}\left( \varepsilon^{\frac{7}{2}} \Vert \nabla u \Vert_{L^{2}(D)} \right)\\
 & \stackrel{(\ref{gradrho0gradu})}{=} & \mathcal{O}\left( \varepsilon^{\frac{7}{2}} \Vert u \Vert_{H^{1}(D)}+\varepsilon^{2} \Vert \partial_\nu u \Vert_{L^{2}(\partial D)} + \varepsilon^{4} \right).
\end{eqnarray}
The analysis of the term 
\begin{equation*}
\int_{\partial D} \rho_{0}^{-1}(x) \, \partial_\nu N (u)(x) \, d\sigma(x)
\end{equation*}
is more delicate and needs more efforts. We star by repeating the same steps of the proof of Proposition \ref{prop:uAsympExp} to obtain
\begin{eqnarray}\label{ML}
\nonumber
\int_{\partial D} \rho_{0}^{-1}(x) \partial_\nu N (u)(x) d\sigma &=& \int_D \gamma(y) \, u(y) \, \int_{\partial  D} \rho_{0}^{-1}(x) \, \partial_{\nu (x)}G_\omega(x-y) d\sigma(x) \, dy \\ \nonumber 
&=& \int_D \gamma(y) \, u(y) \, \int_{D} \underset{x}{\div}\left(\rho_{0}^{-1}(x) \, \underset{x}{\nabla} G_\omega(x-y) \right) dx \, dy \\ \nonumber
&\stackrel{\ref{eq:defFundSol}}{=}& - \int_D \gamma(x) \, u(x) \,dx - \omega^{2} \int_D \gamma(y) \, u(y) \,\int_D k^{-1}_0(x) G_\omega(x-y)  \, dx \,  dy  \\
&=& -  \gamma(z) \int_{D} u (x) dx - \nabla \gamma(z) \cdot \int_{D} (x-z) \, u(x) dx + \mathcal{O}\left( \varepsilon^{\frac{7}{2}} \Vert u \Vert_{L^{2}(D)} \right). 
\end{eqnarray}

To estimate the second term in the right hand side of the last equation we use $(\ref{LSED})$ to get 
\begin{eqnarray*}
\int_{D} (x-z) u(x) dx &=& - \omega^{2} \int_{D} (x-z) N(u)(x) dx + \int_{D} (x-z) M(\nabla u) (x) dx \\ 
&+& \int_{D} (x-z) S(\partial_{\nu} u)(x) dx + \int_{D} (x-z) v(x) dx \\
\left\vert \int_{D} (x-z) u(x) dx \right\vert & \lesssim & \varepsilon^{\frac{9}{2}} \, \Vert u \Vert + \varepsilon^{\frac{9}{2}} \, \Vert \nabla u \Vert + \varepsilon^{2} \, \Vert \partial_{\nu} u \Vert + \mathcal{O}\left( \varepsilon^{4} \right). 
\end{eqnarray*}
Finally, the equation $(\ref{ML})$, with the help of $(\ref{intDint})$, takes the following form
\begin{equation}\label{ce}
\int_{\partial D} \rho_{0}^{-1} \, \partial_\nu N (u) \, d\sigma = \frac{ \gamma(z) k_1}{\omega^2 \rho_1}\int_{\partial D}\partial_\nu u \, d\sigma + \mathcal{O}\left(\varepsilon^{\frac{7}{2}} \Vert u \Vert_{L^{2}(D)} + \varepsilon^{\frac{9}{2}} \Vert \nabla u \Vert_{L^{2}(D)}+ \varepsilon^{2} \Vert \partial_{\nu} u \Vert_{L^{2}(\partial D)} + \varepsilon^{4} \right).
\end{equation}
To finish with the estimation of $(\ref{ce})$, we need to estimate the integral of $\partial_{\nu} u$ over $\partial D$. For this, the identity $(\ref{eq:VarCoeff:LippSchwing:nabla})$ can be rewritten as
\begin{eqnarray*}
\frac{1}{2} \, \left( \rho_{1}^{-1} + \rho_0^{-1}(x) \right) \partial_\nu u(x) &+&  \large{\text{J}}_{\omega}^{\, *} \left[ \left( \rho^{-1}_{1} - \rho_0^{-1}(\cdot) \right) \partial_\nu u (\cdot) \right](x) - \omega^2 \rho_0^{-1}(x) \, \partial_{\nu-}  N(u)(x) \\ 
&+& \rho_0^{-1}(x) \, \partial_{\nu-} M \left(\nabla u \right) (x)  = \rho_0^{-1}(x) \, \partial_\nu v(x).
\end{eqnarray*}
We integrate this equation over $\partial D$ and use $(\ref{ce})$ , $(\ref{Ami3})$ and $\large{\text{J}}_{0}(1) = -1/2$ to obtain 
\begin{eqnarray*}
\frac{k_{1}}{\rho_{1} \, k_{0}}  \, \int_{\partial D}  \partial_\nu u(x) \, d\sigma(x) &+&  \int_{\partial D} \left( \large{\text{J}}_{\omega} - \large{\text{J}}_{0} \right) (1)(x) \, \left[ \left( \rho^{-1}_{1} - \rho_0^{-1}(x) \right) \partial_\nu u (x) \right] d\sigma(x) \\
&=& \int_{\partial D} \rho_0^{-1}(x) \, \partial_\nu v(x) d\sigma(x) + \mathcal{O}\left( \varepsilon^{\frac{7}{2}} \Vert u \Vert_{H^{1}(D)}+\varepsilon^{2} \Vert \partial_\nu u \Vert_{L^{2}(\partial D)} + \varepsilon^{4} \right).
\end{eqnarray*}
and from $(\ref{jw-j0})$ we obtain 
\begin{eqnarray*}
\frac{k_{1}}{\rho_{1} \, k_{0}}  \, \int_{\partial D}  \partial_\nu u(x) \, d\sigma(x) &-&  \frac{\kappa_{0}^{2} \, \alpha(z)}{2}  \int_{\partial D}  \,  \int_{\partial D} \frac{\nu(y)\cdot (y-x)}{4 \, \pi \vert x - y \vert} d\sigma(y) \,  \partial_\nu u (x)  d\sigma(x) \\
&=& \int_{\partial D} \rho_0^{-1}(x) \, \partial_\nu v(x) d\sigma(x) + \mathcal{O}\left( \varepsilon^{\frac{7}{2}} \Vert u \Vert_{H^{1}(D)}+\varepsilon^{2} \Vert \partial_\nu u \Vert_{L^{2}(\partial D)} + \varepsilon^{4} \right).
\end{eqnarray*}

Using the same notations as in $(\ref{defA})$ and the fact that $v$ satisfy the equation $(\ref{uincequa})$ we write 
\begin{eqnarray*}
\left( \frac{k_{1}}{\rho_{1} \, k_{0}} - \frac{\omega^{2} \, \alpha(z)}{2 \, \rho_{0}}  \,A_{\partial D} \, \right) \int_{\partial D}  \partial_\nu u(x) \, d\sigma(x) 
&=& - \frac{\omega^{2}}{k_{0}(z)} \int_{D}  v(x) dx + \mathcal{O}\left( \varepsilon^{\frac{7}{2}} \Vert u \Vert_{H^{1}(D)}+\varepsilon^{2} \Vert \partial_\nu u \Vert_{L^{2}(\partial D)} + \varepsilon^{4} \right).\\ 
& + & \frac{\omega^{2} \, \alpha(z)}{2 \, \rho_{0}}  \,  \int_{\partial D}  \, ( A(x) - A_{\partial D})  \,  \partial_\nu u (x)  d\sigma(x) 
\end{eqnarray*}
and using the definition of $A_{\partial D}$ and the estimation given in $(\ref{ll})$ we obtain  
\begin{eqnarray}\label{estint}
\nonumber
\left( \frac{k_{1}}{\rho_{1} \, k_{0}} - \frac{\omega^{2} \, \alpha(z) \, \rho_{0}}{8 \, \pi \, k_{0}}  \,\mu_{\partial D} \, \right) \int_{\partial D}  \partial_\nu u (x)\, d\sigma(x) & = & - \frac{\omega^{2}}{k_{0}(z)} \int_{D}  v(x) dx \\ 
&+& \mathcal{O}\left( \varepsilon^{\frac{7}{2}} \Vert u \Vert_{H^{1}(D)}+\varepsilon^{2} \Vert \partial_\nu u \Vert_{L^{2}(\partial D)} + \varepsilon^{4} \right). 
\end{eqnarray}
We derive the corresponding estimate as in $(\ref{eq:proof:apriori:dudnuestimate})$:
\begin{equation*}
\int_{\partial D} \partial_\nu u d\sigma = \mathcal{O}\left( \frac{\varepsilon^3}{\omega^2-\omega_M^2} \right) + \mathcal{O} \left( \frac{\varepsilon^{\frac{7}{2}} \Vert u \Vert_{H^{1}(D)}+ \varepsilon^{2} \Vert \partial_\nu u \Vert_{L^{2}(\partial D)}+ \varepsilon^{4}}{\omega^2-\omega_M^2} \right),
\end{equation*}
where now 
\begin{equation*}
\omega^{2}_{M} := \omega^{2}_{M}(z) :=  \frac{8 \, \pi \, \overline{k}_{1}}{\rho_{0}(z) \, \mu_{\partial B}}. 
\end{equation*}
Back substituting these results into $(\ref{ce})$ we obtain 
\begin{equation*}\label{AM}
\int_{\partial D} \rho_{0}^{-1}(x) \, \partial_\nu N(u)(x) \, d\sigma(x) = \frac{- \gamma(z) \, k_1  \, 8 \pi}{(8 \pi \, k_{1} \, - \, \omega^{2} \, \rho_{0} \, \mu_{\partial D})}    \int_{D}  v(x) dx + \mathcal{O}\left( \varepsilon^{\frac{7}{2}} \Vert u \Vert_{H^{1}(D)}+ \varepsilon^{2} \Vert \partial_{\nu} u \Vert_{L^{2}(\partial D)} + \varepsilon^{4} \right).
\end{equation*}
\end{enumerate}
With the same calculations as for \eqref{eq:apriori:pdnuu}, we derive the estimate
\begin{equation*}
\Vert \partial_\nu u \Vert_{L^{2}(\partial D)}  =    \mathcal{O}\left( \frac{ \varepsilon^{\frac{5}{2}} \Vert u \Vert_{H^{1}(D)} +\varepsilon \Vert \partial_\nu u \Vert_{L^{2}(\partial D)} + \varepsilon^{2} }{\omega^2 - \omega_{M}^2}  \right).
\end{equation*}
Hence if $\varepsilon / (\omega^2 - \omega_M^2)$ is small enough, we have
\begin{equation*}
\Vert \partial_\nu u \Vert_{L^{2}(\partial D)}  =    \mathcal{O}\left( \frac{ \varepsilon^{\frac{5}{2}} \Vert u \Vert_{H^{1}(D)} + \varepsilon^{2} }{\omega^2 - \omega_{M}^2}  \right).
\end{equation*}
Finally from $(\ref{eq:proof:VarCoeff:uestimate:H1bound})$,  we obtain $(\ref{eq:u_estimate_2:VarCoeff})$ and $(\ref{eq:u_estimate_3:VarCoeff})$.
\end{proof}
\begin{remark}
A straightforward calculation, from $(\ref{normgradu})$, $(\ref{eq:proof:VarCoeff:uestimates:2})$ and the estimation of $\Vert \partial_{\nu} u \Vert_{L^{2}(\partial D)}$, allows to deduce that 
\begin{equation*}
\Vert u \Vert_{L^{2}(D)} = \mathcal{O}\left( \frac{\varepsilon^{\frac{3}{2}}}{\omega^{2}-\omega^{2}_{M}} \right)       \quad \text{and} \quad \Vert \nabla u \Vert_{L^{2}(D)} = \mathcal{O}\left( \frac{\varepsilon^{\frac{1}{2}}}{\omega^{2}-\omega^{2}_{M}} \right).
\end{equation*}
\end{remark}
To compute the first order approximation of $\int_{\partial D} \partial_\nu u \, d\sigma$, we rewrite $\ref{estint}$ as 
\begin{eqnarray*}
 \frac{\rho_{0} \, \mu_{\partial B}}{8 \pi \, \overline{\rho}_{1} \, k_{0}} (\omega_{M}^{2}-\omega^{2})  \, \int_{\partial D}  \partial_\nu u (x)\, d\sigma(x) & = & - \frac{\omega^{2}}{k_{0}(z)} \int_{D}  v(x) dx - \frac{\omega^{2} \, \varepsilon^{2} \, \mu_{\partial B}}{8 \, \pi \, k_{0}}  \int_{\partial D}  \partial_\nu u (x)\, d\sigma(x) \\ 
&+& \mathcal{O}\left( \varepsilon^{\frac{7}{2}} \Vert u \Vert_{H^{1}(D)}+\varepsilon^{2} \Vert \partial_\nu u \Vert_{L^{2}(\partial D)} + \varepsilon^{4} \right). 
\end{eqnarray*}
Successively, on the right hand side,  by Cauchy-Schwartz inequality, we estimate the term containing $\partial_{\nu} u$   as $\mathcal{O}\left(\varepsilon^{3} \, \Vert \partial_{\nu} u \Vert_{L^{2}(\partial D)}\right)$, we use the Taylor expansion of $v$ near the center and Proposition \ref{prop:uAsympExp:VarCoeff}, to obtain
\begin{equation*}
 \int_{\partial D}  \partial_\nu u (x)\, d\sigma(x)  =   \frac{\omega^{2} \, 8\pi \, \overline{\rho}_{1}}{\rho_{0} \, \mu_{\partial B}( \omega^{2} - \omega^{2}_{M} )} \,  v(z) \, \vert D \vert  + \mathcal{O}\left( \frac{\varepsilon^{4}}{ \left( \omega^{2}-\omega_{M}^{2} \right)^{2}}  \right). 
\end{equation*}
Plugging this estimation in $(\ref{ibra})$, and with help of the Proposition \ref{prop:uAsympExp:VarCoeff}, we obtain, for $x$ away from $D$,
\begin{eqnarray}
u^s(x)  &=& v^{s}(x)  - \,  G_\omega (x-z)  \,  \frac{\omega^{2} \, \omega^{2}_{M}}{\overline{k}_{1} \, ( \omega^{2} - \omega^{2}_{M} )} \,  v(z) \, \vert B \vert \, \varepsilon + \mathcal{O}\left( \frac{\varepsilon^{2}}{ \left( \omega^{2}-\omega^{2}_{M} \right)^{2}} \right), 
\end{eqnarray}
as it was done for $(\ref{usctecase})$.\\ Using the mixed reciprocity relation, we derive the expansion of the associated far fields as it was mentioned in $(\ref{farfield-first-regime})$. 

\section{Proof of Theorem \ref{Theorem-Using-Newtonian-Resonances}}

To avoid additional lengthy computations, we provide the detailed proof in the case the background is homogeneous. The case of inhomogeneous background can be handled following the steps described in the previous section. 
\bigskip

The starting point is, again, the system of the integral equations:
\begin{equation}\label{eq:ReworkLippmannSchwinger-chap4}
  u(x) -  \gamma \omega^2  \int_D G_\omega(x-y)u(y) dy + \alpha \int_{\partial D} G_\omega (x-y) \, \partial _{\nu} u(y) \, d \sigma(y) = u^i(x), \mbox{ in } D ,
\end{equation}
\begin{equation}\label{eq:ReworkLippmannSchwinger2-chap4}
\left[ \left(1 + \frac{\alpha \rho_0}{2} \right) I + \alpha ( K_D^{\omega})^*\right] \left[ \partial_{\nu} u \right](x) - \gamma \omega^2   \partial_{\nu-} \int_D G_\omega(x-y)u(y) dy  = \partial_{\nu} u^{i}(x), \mbox{ on } \partial D.
\end{equation}
Notice that due to the scaling of $\rho_1$ and $k_1$, in this regime, we have 
\begin{equation}\label{2ndregime}
\gamma \sim \varepsilon^{-2} \;\; \text{while} \;\; \alpha
\sim \varepsilon^{j},\; j>0, \;\; \text{as} \;\; \varepsilon \to 0
\end{equation}
where we recall that $\gamma := \beta - \alpha \, \rho_{1} \; k_{1}^{-1}, \; \alpha := \rho_{1}^{-1}-\rho_{0}^{-1}$ and $\beta := k_{1}^{-1}-k_{0}^{-1}$.\\
The strategy of the proof is quite similar to the previous section. Indeed, we first provide the a priori estimation of both $u$ and $\partial_{\nu} u$ and then derive the dominating term of the expansion of the scattered fields. The main difference is the fact that the regimes, fixed by the contrasts of the mass densities and bulk moduli, are different. 

\subsection{A priori estimation}
We start with the equation $(\ref{eq:ReworkLippmannSchwinger2-chap4})$, i.e   
\begin{equation}\label{equa+43}
\left[ \left(1 + \frac{\alpha \rho_0}{2} \right) I + \alpha ( K_D^{\omega})^*\right] \left[ \partial_{\nu}u \right](x) = \gamma \omega^2   \partial_{\nu} N^{\omega}(u)(x)  + \partial_{\nu}u^{i}(x).
\end{equation}
As $\alpha$ is small, see ($\ref{2ndregime}$), then $\left[ \left(1 + \frac{\alpha \rho_0}{2} \right) I + \alpha ( K_D^{\omega})^*\right]^{-1}$ exists. Taking the inverse in both sides of $(\ref{equa+43})$ we obtain 
\begin{equation*}\label{equa43}
\partial_{\nu} u = \gamma \omega^2  \left[ \left(1 + \frac{\alpha \rho_0}{2} \right) I + \alpha ( K_D^{\omega})^*\right]^{-1} \left[ \partial_{\nu}N^{\omega}(u) \right] + \left[ \left(1 + \frac{\alpha \rho_0}{2} \right) I + \alpha ( K_D^{\omega})^*\right]^{-1} \left[\partial_{\nu} u^{i} \right]
\end{equation*}
and then by taking the $L^{2}(\partial D)$-norm, we obtain 
\begin{eqnarray*}\label{B}
\nonumber
\left\Vert \frac{\partial u}{\partial \nu} \right\Vert_{L^{2}(\partial D)} & \leq &  \frac{\vert \gamma \, \omega^2 \vert}{\alpha} \left\Vert \left[ \left(\frac{1}{\alpha} + \frac{\rho_0}{2} \right) I +  ( K_D^{\omega})^*\right]^{-1} \right\Vert_{\mathcal{L}\left({L^{2}(\partial D)} \right)} \left\Vert \partial_{\nu} N^{\omega}(u) \right\Vert_{L^{2}(\partial D)} \\  
&+& \frac{1}{\alpha} \left\Vert \left[ \left(\frac{1}{\alpha} + \frac{\rho_0}{2} \right) I +  ( K_D^{\omega})^*\right]^{-1} \right\Vert_{\mathcal{L}\left({L^{2}(\partial D)} \right)} \; \left\Vert \frac{\partial u^i}{\partial \nu} \right\Vert_{L^{2}(\partial D)}.
\end{eqnarray*}
Since
\begin{equation*}
 \Bigg\Vert  \Bigg[\Big(\frac{1}{\alpha} + \frac{\rho_{0}}{2}  \Big) \, I +  (K_{D}^{\omega})^{\star} \Bigg]^{-1}\Bigg\Vert_{\mathcal{L}} \lesssim  \frac{1}{dist\left(\frac{1}{\alpha} + \frac{\rho_{0}}{2} ;\sigma\left( (K_{D}^{\omega})^{\star}\right) \right) } = \frac{1}{\Big\vert \frac{1}{\alpha} + \frac{\rho_{0}}{2} - \frac{1}{2}\Big\vert} \simeq \alpha 
\end{equation*}
and
\begin{equation*}
\left\Vert \partial_{\nu} \, N^{\omega}(u) \,\right\Vert_{L^{2}(\partial D)} \leq C^{te} \; \varepsilon^{\frac{1}{2}} \; \Vert u \Vert_{L^{2}(D)}
\end{equation*}
then we obtain 
\begin{equation}\label{E}
\left\Vert \partial_{\nu} u \right\Vert_{L^{2}(\partial D)} \lesssim \; \gamma \; \varepsilon^{\frac{1}{2}} \; \Vert u \Vert_{L^{2}(D)} \; +  \;\; \left\Vert \partial_{\nu} u^{i} \right\Vert_{L^{2}(\partial D)}.
\end{equation}
Next, we consider the equation $(\ref{eq:ReworkLippmannSchwinger-chap4})$, i.e  
\begin{equation}\label{Mrm}
  u(x) -  \gamma \omega^2  N^{\omega}(u)(x) = - \alpha \, S\left(\partial_{\nu}u \right)(x) + u^i(x) ,
\end{equation}
as
\begin{eqnarray*}
N^{\omega}(u)(x) &:=& \int_{D} G_{\omega}(x,y) \, u(y) \, dy = \int_{D} \Gamma_{\omega}(x,y) \, u(y) \, dy + \int_{D} \left(G_{\omega}- \Gamma_{\omega}\right)(x,y) \, u(y) \, dy \\ 
&=& \int_{D} \Gamma_{0}(x,y) \, e^{i \, \kappa_{0} \, \vert x-y \vert} \, u(y) \, dy + \int_{D} \left(G_{\omega}- \Gamma_{\omega}\right)(x,y) \, u(y) \, dy \\
&=& \int_{D} \Gamma_{0}(x,y) \, \sum_{n \geq 0} \frac{\left( i \, \kappa_{0} \, \vert x-y \vert \right)^{n}}{n!}  \, u(y) \, dy + \int_{D} \left(G_{\omega}- \Gamma_{\omega}\right)(x,y) \, u(y) \, dy \\ 
& = & N^{0}(u)(x) +  \sum_{n \geq 1}  \int_{D} \Gamma_{0}(x,y) \, \frac{\big( i \, \kappa_{0} \, \vert x-y \vert \big)^{n}}{n!}  \, u(y) \, dy + \int_{D} \left(G_{\omega}- \Gamma_{\omega}\right)(x,y) \, u(y) \, dy.
\end{eqnarray*}
the equation $(\ref{Mrm})$ takes the following form 
\begin{eqnarray}\label{zz}
\nonumber
u(x) - \gamma \, \omega^{2} \, N^{0}(u) (x) &=& - \alpha \, S\left(\partial_{\nu} u \right)(x) + \, u^{i}(x) + \gamma \, \omega^{2} \int_{D} \left(G_{\omega}- \Gamma_{\omega}\right)(x,y) \, u(y) \, dy \\
&+& \gamma \, \omega^{2} \, \rho_{0} \sum_{n \geq 1} \left(i \, \kappa_{0}\right)^{n} \int_{D}  \frac{\vert x-y \vert^{n-1}}{n!}  \, u(y) \, dy.  
\end{eqnarray}
We take the $L^{2}(D)$-norm in both sides of the last equation to obtain
\begin{eqnarray}\label{equa1}
\nonumber
\left\Vert u - \gamma \, \omega^{2} \, N^{0}(u) \right\Vert  & \leq & \alpha \, \left\Vert S\left(\partial_{\nu} u \right) \right\Vert + \left\Vert u^{i} \right\Vert + \vert \gamma \, \omega^{2} \vert \, \Vert u \Vert \, \left[\int_{D} \int_{D} \left\vert G_{\omega}- \Gamma_{\omega}\right\vert^{2} (x,y) \, dx \, dy \right]^{\frac{1}{2}} \\
&+& \vert \gamma \, \omega^{2} \, \rho_{0} \vert  \sum_{n \geq 1} \vert \kappa_{0}  \vert^{n} \, \left\Vert \int_{D}  \frac{\vert \cdot -y \vert^{n-1}}{n!}  \, u(y) \, dy \right\Vert. %\\ 
%\Big\Vert u - \gamma \, \omega^{2} \, \rho_{0} \, N^{0}(u) \Big\Vert_{L^{2}(D)} &=& \Big\Vert \gamma \, \omega^{2}  \sum_{n \geq 1} \rho_{0} \, \int_{D} \Phi_{0}(x,y) \, \frac{\big( i \, k_{0}(y) \, \vert x-y \vert \big)^{n}}{n!}  \, u(y) \, dy - \alpha \, SL^{w}\big(\frac{\partial u}{\partial \nu}  \big)  \, + \, u^{Inc} \Big\Vert_{L^{2}(D)} \\ \nonumber
%& \leq & \big\vert \gamma \big\vert \;    \sum_{n \geq 1} \, \frac{\varepsilon^{n-1}}{n!}  \,\Bigg\Vert \int_{D}  \, k^{n}_{0}(y) \,   \, u(y) \, dy \Bigg\Vert_{L^{2}(D)} + \Big\vert  \alpha \Big\vert \; \Big\Vert  \, SL^{w}\big(\frac{\partial u}{\partial \nu}  \big) \Big\Vert_{L^{2}(D)} \, + \,\Big\Vert u^{Inc} \Big\Vert_{L^{2}(D)} \\ \nonumber
%& \leq & \big\vert \gamma \big\vert \;  \vert D \vert^{\frac{1}{2}}  \underbrace{\sum_{n \geq 1} \, \frac{\varepsilon^{n-1}}{n!}}_{\mathcal{O}(1)}  \;\; \underbrace{\Vert k^{n}_{0} \Vert}_{\leq \, \varepsilon^{\frac{3}{2}}} \;\;   \, \Vert u \Vert \,  + \Big\vert  \alpha \Big\vert \; \Big\Vert  \, SL^{w}\big(\frac{\partial u}{\partial \nu}  \big) \Big\Vert_{L^{2}(D)} \, + \,\Big\Vert u^{Inc} \Big\Vert_{L^{2}(D)} \\ 
%& \leq & \varepsilon \; \Vert u \Vert + \Big\vert  \alpha \Big\vert \; \Big\Vert  \, SL^{w}\big(\frac{\partial u}{\partial \nu}  \big) \Big\Vert_{L^{2}(D)} \, + \,\Big\Vert u^{Inc} \Big\Vert_{L^{2}(D)}
\end{eqnarray}
Recall, see for instance $(\ref{SL})$, that
\begin{equation*}
\left\Vert S^{\omega}\left(\partial_{\nu} u \right) \right\Vert_{L^{2}(D)} \leq C^{te} \; \varepsilon^{\frac{3}{2}} \; \left\Vert \partial_{\nu} u \right\Vert_{L^{2}(\partial D)}
\end{equation*}
and 
\begin{equation*}
\sum_{n \geq 1} \vert \kappa_{0}  \vert^{n} \, \left\Vert \int_{D}  \frac{\vert \cdot -y \vert^{n-1}}{n!}  \, u(y) \, dy \right\Vert \leq \Vert u \Vert \, \varepsilon^{3} \,  \sum_{n \geq 1} \vert \kappa_{0}  \vert^{n} \, \frac{\varepsilon^{n-1}}{n!}  = \mathcal{O}\left(\Vert u \Vert \, \varepsilon^{3} \right). 
\end{equation*}
In the appendix $(\ref{lemmaproof})$ we prove that the function $\left(G_{\omega} - \Gamma_{\omega} \right)$ are bounded, then  
\begin{equation}\label{normG-G}
\left[\int_{D} \int_{D} \left\vert G_{\omega}- \Gamma_{\omega}\right\vert^{2} (x,y) \, dx \, dy \right]^{\frac{1}{2}} = \mathcal{O}\left( \varepsilon^{3} \right).
\end{equation}
Then $(\ref{equa1})$ becomes
\begin{equation}\label{equa411}
\left\Vert u - \gamma \, \omega^{2} \, N^{0}(u) \right\Vert_{L^{2}\left( D \right)}   \lesssim \alpha \, \varepsilon^{\frac{3}{2}} \, \left\Vert \partial_{\nu} u \right\Vert_{L^{2}\left(\partial D \right)} + \left\Vert u^{i} \right\Vert_{L^{2}\left( D \right)} + \varepsilon \, \Vert u \Vert_{L^{2}\left( D \right)}.
\end{equation}
Plugging $(\ref{E})$ in $(\ref{equa411})$, we obtain 
\begin{equation}\label{norm(I-N)}
\left\Vert u - \gamma \, \omega^{2} \, N^{0}(u) \right\Vert_{L^{2}\left( D \right)}   \lesssim \alpha \, \varepsilon^{\frac{3}{2}} \, \left\Vert \partial_{\nu} u^{i} \right\Vert_{L^{2}(\partial D)} + \left\Vert u^{i} \right\Vert_{L^{2}\left( D \right)} + \left( \alpha + \varepsilon \right) \, \Vert u \Vert_{L^{2}\left( D \right)}.
\end{equation}
Let $\left( \lambda^{D}_n, \rho_{0} \, e^{D}_n \right)_{n \in \mathbb{N}}$ the eigensystem of the Newtonian operator $N^{0}$ which is positive, compact and selfadjoint on $L^{2}(D)$. Using this basis, the left hand side of $(\ref{norm(I-N)})$ can be computed as\footnote{Where $<\cdot;\cdot>$ stands for the $L^{2}(D)$ inner product.}
\begin{eqnarray}\label{seqbb}
\nonumber
\left\Vert u - \gamma \, \omega^{2} \, N^{0}(u) \right\Vert^{2}_{L^{2}(D)} &=& \sum_{n} \left\vert < u - \gamma \, \omega^{2}  \, N^{0}(u) ; e_{n} > \right\vert^{2} = \sum_{n} \left\vert < u;e_{n} > \right\vert^{2} \; \left\vert 1 - \gamma \, \omega^{2} \, \rho_{0} \, \lambda^{D}_{n} \right\vert^{2} \\
%&=& \sum_{n} \Big\vert < u - \gamma \, \omega^{2} \, N^{\omega}(u) ; e_{n} > \Big\vert^{2}\\ 
&=& \left\vert < u;e_{n_{0}} > \right\vert^{2} \; \left\vert 1 - \gamma \, \omega^{2} \, \rho_{0} \, \lambda^{D}_{n_{0}} \right\vert^{2} + \sum_{n \neq n_{0}} \left\vert < u;e_{n} > \right\vert^{2} \; \left\vert 1 - \gamma \, \omega^{2}  \, \rho_{0} \, \lambda^{D}_{n} \right\vert^{2}. 
\end{eqnarray}
Next, we choose $\omega^{2}$ such that $\Big\vert 1 - \gamma \, \omega^{2} \, \rho_{0} \, \lambda^{D}_{n_{0}} \Big\vert \sim \varepsilon^{h}$, which implies that 
\begin{equation}\label{defwn0}
\omega^{2} := \frac{1 \pm \varepsilon^{h}}{\gamma \, \rho_{0} \, \lambda^{D}_{n_{0}}}, \;\; \text{or} \;\; (\omega^{2}-\omega^{2}_{n_{0}}) \sim \varepsilon^{h} \;\; \text{where} \;\; \omega^{2}_{n_{0}} := \frac{1}{\gamma \, \rho_{0} \, \lambda^{D}_{n_{0}}}
\end{equation}
since, using the definition of $\gamma$, the scale of $\rho_{1}$, $k_{1}$ and the fact that $\lambda^{D}_{n_{0}} = \lambda^{B}_{n_{0}} \, \varepsilon^{2}$, we have 
\begin{equation*}
\omega_{n_{0}}^{-2} := \gamma \, \rho_{0} \, \lambda^{D}_{n_{0}} = \left( \frac{-\rho_{0}}{k_{0}} + \frac{\rho_{1} \, \varepsilon^{-2}}{\overline{k}_{1}} \right) \,  \, \lambda^{B}_{n_{0}} \, \varepsilon^{2} = \mathcal{O}(1).
\end{equation*}
Since we choose $\omega^{2}$ close to $\omega^{2}_{n_{0}}$ we deduce that, for $n \neq n_{0}$, the sequence  $\left\vert 1 - \gamma \, \omega^{2} \, \rho_{0} \, \lambda^{D}_{n} \right\vert^{2}$ is bounded below. Then, if we set $\sigma := \underset{n \neq n_{0}}{\inf} \left\vert 1 - \gamma \, \omega^{2} \, \rho_{0} \, \lambda^{D}_{n} \right\vert^{2}$, the  equation $(\ref{seqbb})$ becomes
\begin{eqnarray*}
\left\Vert u - \gamma \, \omega^{2} \, N^{0}(u) \right\Vert^{2}_{L^{2}(D)} & \geq & \left\vert < u;e_{n_{0}} > \right\vert^{2} \; \left\vert 1 - \gamma \, \omega^{2} \, \rho_{0} \, \lambda^{D}_{n_{0}} \right\vert^{2} + \sigma \, \sum_{n \neq n_{0}} \left\vert < u;e_{n} > \right\vert^{2}. 
\end{eqnarray*}
From the previous equation, with help of $(\ref{norm(I-N)})$, we deduce that  
\begin{equation}\label{Z1}
\left\vert < u;e_{n_{0}} > \right\vert^{2}  \lesssim \left\vert 1 - \gamma \, \omega^{2} \, \rho_{0} \, \lambda_{n_{0}} \right\vert^{-2} \left[ \alpha \, \varepsilon^{\frac{3}{2}} \, \left\Vert \partial_{\nu} u^{i} \right\Vert_{L^{2}(\partial D)} + \left\Vert u^{i} \right\Vert_{L^{2}\left( D \right)} +  \left( \alpha + \varepsilon \right) \, \Vert u \Vert_{L^{2}\left( D \right)}\right]^{2}
\end{equation}
and 
\begin{equation}\label{Z2}
\sum_{n \neq n_{0}} \left\vert <u;e_{n}> \right\vert^{2} \lesssim \sigma^{-1} \, \left[ \alpha \, \varepsilon^{\frac{3}{2}} \, \left\Vert \partial_{\nu} u^{i} \right\Vert_{L^{2}(\partial D)} + \left\Vert u^{i} \right\Vert_{L^{2}\left( D \right)} + \left( \alpha +  \varepsilon \right) \, \Vert u \Vert_{L^{2}\left( D \right)}\right]^{2}.
\end{equation}
\label{cdth}
Now, if we sum $(\ref{Z1})$ and $(\ref{Z2})$, we obtain  
\begin{eqnarray*}
\left\Vert u \right\Vert_{L^{2}(D)}^{2} &:=& \left\vert <u;e_{n_{0}}> \right\vert^{2} + \sum_{n \neq n_{0}} \left\vert <u;e_{n}> \right\vert^{2} \\
& \lesssim & \left( \sigma^{-1} + \left\vert 1 - \gamma \, \omega^{2} \, \rho_{0} \, \lambda^{D}_{n_{0}} \right\vert^{-2} \, \right) \left[ \alpha^{2} \, \varepsilon^{3} \, \left\Vert \partial_{\nu} u^{i} \right\Vert^{2}_{L^{2}(\partial D)} + \left\Vert u^{i} \right\Vert^{2}_{L^{2}\left( D \right)} + \left( \alpha + \varepsilon \right)^{2} \, \Vert u \Vert^{2}_{L^{2}\left( D \right)}\right] \\
& \lesssim & \frac{1}{\left\vert 1 - \gamma \, \omega^{2} \, \rho_{0} \, \lambda^{D}_{n_{0}} \right\vert^{2}} \, \left[ \alpha^{2} \, \varepsilon^{3} \, \left\Vert \partial_{\nu} u^{i} \right\Vert^{2}_{L^{2}(\partial D)} + \left\Vert u^{i} \right\Vert^{2}_{L^{2}\left( D \right)} + \left( \alpha + \varepsilon \right)^{2} \, \Vert u \Vert^{2}_{L^{2}\left( D \right)}\right]
%&+&  \sigma^{-1} \, \left[ \alpha \, \varepsilon^{\frac{3}{2}} \, \left\Vert \partial_{\nu} u^{i} \right\Vert_{L^{2}(\partial D)} + \left\Vert u^{i} \right\Vert_{L^{2}\left( D \right)} + \alpha \, \varepsilon \, \Vert u \Vert_{L^{2}\left( D \right)}\right]
% \\ &+& \Vert u^{Inc} \Vert^{2}_{L^{2}(D)}\Bigg] \\
%&+& \frac{C^{te}}{\sigma} \Bigg[ \Big( \varepsilon + \frac{\vert \alpha \vert \, C^{te} \, \varepsilon^{2} \, \vert \gamma \omega^{2} \vert }{\vert 2 + \alpha (\rho_{0}-1) \vert}  \Big)^{2} \; \Vert u \Vert^{2}_{L^{2}(D)} + \varepsilon^{3} \; \vert \alpha \vert^{2} \; \Bigg\Vert \frac{\partial \, u^{Inc}}{\partial \nu} \Bigg\Vert^{2}_{L^{2}(\partial D)} + \Vert u^{Inc} \Vert^{2}_{L^{2}(D)}\Bigg] \\ 
\end{eqnarray*}
then
\begin{eqnarray*}
\Vert u \Vert^{2}_{L^{2}(D)} \left(1 - \frac{\left( \alpha + \varepsilon \right)^{2}}{ \left\vert 1 - \gamma \, \omega^{2} \, \rho_{0} \, \lambda^{D}_{n_{0}} \right\vert^{2}} \right) & \lesssim &  \frac{1}{\left\vert 1 - \gamma \, \omega^{2} \, \rho_{0} \, \lambda_{n_{0}} \right\vert^{2}} \; \left[ \alpha^{2} \, \varepsilon^{3} \, \left\Vert \partial_{\nu} u^{i} \right\Vert^{2}_{L^{2}(\partial D)} + \left\Vert u^{i} \right\Vert^{2}_{L^{2}\left( D \right)} \right]. 
\end{eqnarray*}
%After, 
%$$
%\Vert u \Vert^{2} \underbrace{\Bigg(1 - \frac{C^{te} \;\Big( \varepsilon + \frac{\vert \alpha \vert \, C^{te} \, \varepsilon^{2} \, \vert \gamma \omega^{2} \vert }{\vert 2 + \alpha (\rho_{0}-1) \vert}  \Big)^{2}}{\Big\vert 1 - \gamma \, \omega^{2} \, \rho_{0} \, \lambda_{n_{0}} \Big\vert^{2}}\Bigg)}_{(\star)}  \leq  \frac{C^{te}}{\Big\vert 1 - \gamma \, \omega^{2} \, \rho_{0} \, \lambda_{n_{0}} \Big\vert^{2}}  \Bigg[  \varepsilon^{3} \; \vert \alpha \vert^{2} \; \Bigg\Vert \frac{\partial \, u^{Inc}}{\partial \nu} \Bigg\Vert^{2}_{L^{2}(\partial D)} + \Vert u^{Inc} \Vert^{2}_{L^{2}(D)}\Bigg]
%$$
We choose $\alpha$ such that $1 - \left( \alpha + \varepsilon \right)^{2} \, \left\vert 1 - \gamma \, \omega^{2} \, \rho_{0} \, \lambda^{D}_{n_{0}} \right\vert^{-2}$ is uniformly bounded from below. For this, we see that 
\begin{equation*}
1 -  \left( \alpha + \varepsilon \right)^{2} \, \left\vert 1 - \gamma \, \omega^{2} \, \rho_{0} \, \lambda^{D}_{n_{0}} \right\vert^{-2} \, \simeq 1 - \varepsilon^{-2h} \, \left( \alpha + \varepsilon \right)^{2}. 
\end{equation*}
%Recall that $\lambda_{n_{0}} = \varepsilon^{2} \; \lambda_{n_{0}}(B)$ and $\rho_{0} = \mathcal{O}(1)$, then we get
%\begin{equation}
%1 - \frac{C^{te} \; \Big( \varepsilon + \frac{\vert \alpha \vert \, C^{te} \, \varepsilon^{2} \, \vert \gamma \omega^{2} \vert }{\vert 2 + \alpha (\rho_{0}-1) \vert}  \Big)^{2}}{\Big\vert 1 - \gamma \, \omega^{2} \, \rho_{0} \, \lambda_{n_{0}} \Big\vert^{2}} \simeq 1 - C^{te} \,\varepsilon^{-2h} \, \big( \varepsilon +  \, \vert \alpha \vert \big)^{2}
%\end{equation}
Take $\alpha \sim \varepsilon^{j}$, where  $j > 0$, then 
\begin{equation*}
1 -  \left( \alpha + \varepsilon \right)^{2} \, \left\vert 1 - \gamma \, \omega^{2} \, \rho_{0} \, \lambda^{D}_{n_{0}} \right\vert^{-2} \, \simeq 1 - \varepsilon^{2\left(\min(1,j)-h\right)}. 
\end{equation*} 
This implies, if $\,h < \min(j,1)$, the boundedness from below of $1 - \left( \alpha + \varepsilon \right)^{2} \, \left\vert 1 - \gamma \, \omega^{2} \, \rho_{0} \, \lambda^{D}_{n_{0}} \right\vert^{-2}$. Now, under the condition
\begin{equation}\label{hj1}
h < \min(j,1)
\end{equation}
we get an estimation of $\Vert u \Vert$ with respect to $\Vert u^{i} \Vert$ and $\Vert \partial_{\nu} u^{i} \Vert$ as follows 
\begin{equation}\label{fdd}
\Vert u \Vert^{2}_{L^{2}(D)}   \lesssim   \frac{1}{\left\vert 1 - \gamma \, \omega^{2} \, \rho_{0} \, \lambda_{n_{0}} \right\vert^{2}} \; \left[\varepsilon^{3+2j} \, \left\Vert \partial_{\nu} u^{i} \right\Vert^{2}_{L^{2}(\partial D)} + \left\Vert u^{i} \right\Vert^{2}_{L^{2}\left( D \right)} \right]. 
\end{equation}
We plug $(\ref{fdd})$ into $(\ref{E})$ to obtain an estimation of $\Vert \partial_{\nu} u \Vert$  with respect to $\Vert u^{i} \Vert$ and $\Vert \partial_{\nu} u^{i} \Vert$. Precisely, we obtain
%\begin{eqnarray*}
%\Bigg\Vert \frac{\partial u}{\partial \nu} \Bigg\Vert_{L^{2}(\partial D)} &\leq& \; \frac{ \big\vert C^{te} \;\gamma \, \omega^{2} \big\vert}{\big\vert 2+\alpha \, (\rho_{0}-1)\big\vert}  \; \; \varepsilon^{\frac{1}{2}} \; \Vert u \Vert_{L^{2}(D)} \; + \frac{2}{\big\vert 2+\alpha \, (\rho_{0}-1)\big\vert} \;\; \Bigg\Vert \frac{\partial \, u^{Inc}}{\partial \nu} \Bigg\Vert_{L^{2}(\partial D)} \\ 
% & \lesssim &  \varepsilon^{-3} \; \Vert u \Vert^{2}_{L^{2}(D)} \; + \; \Bigg\Vert \frac{\partial \, u^{Inc}}{\partial \nu} \Bigg\Vert^{2}_{L^{2}(\partial D)} \\ 
% & \lesssim & \underbrace{\Big( \varepsilon^{2(j-h)} + 1 \Big)}_{C^{te}} \;  \; \Bigg\Vert \frac{\partial \, u^{Inc}}{\partial \nu} \Bigg\Vert^{2}_{L^{2}(\partial D)} + \varepsilon^{-3-2h} \;  \Vert u^{Inc} \Vert^{2}_{L^{2}(D)}
%\end{eqnarray*}
%Finally  
\begin{equation}\label{norder}
\left\Vert \partial_{\nu} u \right\Vert^{2}_{L^{2}(\partial D)}  \lesssim  \; \left\Vert \partial_{\nu} u^{i} \right\Vert^{2}_{L^{2}(\partial D)} + \varepsilon^{-3-2h} \;  \left\Vert u^{i} \right\Vert^{2}_{L^{2}(D)}.
\end{equation}

%\newpage

\subsection{Estimation of the scattered field}
\
\\
We write the integral equation $(\ref{zz})$ and we develop the incident field $u^{i}$ near the center $z$ to obtain 
\begin{eqnarray*}
\left( I - \gamma \, \omega^{2} \, N^{0}\right)(u) (x) &=& - \alpha \, S^{\omega}\left(\partial_{\nu} u \right)(x) + \, u^{i}(z) + \int_{0}^{1}(x-z)\cdot \nabla u^{i}(z+t(x-z)) \; dt  \\
&+& \gamma \, \omega^{2} \int_{D} \left(G_{\omega}- \Gamma_{\omega}\right)(x,y) \, u(y) \, dy  + \gamma \, \omega^{2} \, \rho_{0} \, i \, \kappa_{0} \, \int_{D}  u(y) \, dy \\
&+& \gamma \, \omega^{2} \, \rho_{0} \sum_{n \geq 2} \left(i \, \kappa_{0}\right)^{n} \int_{D}  \frac{\vert x-y \vert^{n-1}}{n!}  \, u(y) \, dy.  
\end{eqnarray*}
Next, successively, we set $W$ to be $ W := \left(I - \gamma \, \omega^{2}  \, N^{0} \right)^{-1}(1),$ apply the self adjoint operator $\left(I - \gamma \, \omega^{2} \, N^{0} \right)^{-1}$ in both sides of the previous equation and integrate over the domain $D$ to obtain 
%\begin{eqnarray*}
%u(x)  &=& u^{Inc}(z) \; W(x) + \Big(I - \gamma \, \omega^{2} \, \rho_{0} \, N \Big)^{-1} \int_{0}^{1} (x-z)\centerdot \nabla u^{Inc}(z+t(x-z)) \, dt \\ &-& \alpha \,\Big(I - \gamma \, \omega^{2} \, \rho_{0} \, N \Big)^{-1} \int_{\partial D} G_\omega(x-y) \frac{\partial u}{\partial \nu}(y) \, d\sigma(y) \\
%&+& \gamma \, \omega^{2} \, \rho_{0} \,\Big(I - \gamma \, \omega^{2} \, \rho_{0} \, N \Big)^{-1} \Bigg[i \, \int_{D} k_{0}(y) \, u(y) \, dy + \varepsilon^{2} \sum_{n \geq 2} \frac{\big(i \varepsilon \big)^{n}}{n!} \, \int_{B} \Big( \tilde{k_{0}}(\xi)\Big)^{n} \, \vert \eta - \xi \vert^{n-1} \, \tilde{u}(\xi) \, d\xi \Bigg] 
%\end{eqnarray*}
%Integrate over $D$, and use the fact that $(I - \gamma \, \omega^{2} \, \rho_{0} \, N \Big)$ is self-adjoint to obtain 
\begin{eqnarray}\label{intu}
\nonumber
\int_{D} u(x) dx  &=& u^{i}(z) \; \int_{D} W(x) dx +  \int_{D} W(x) \, \int_{0}^{1} (x-z)\centerdot \nabla u^{i}(z+t(x-z)) \, dt \, dx \\ \nonumber
&-& \alpha \, \int_{D} W(x) \, S^{\omega}\left( \partial_{\nu} u \right)(x) dx + \gamma \, \omega^{2} \, \rho_{0} \, i \, \kappa_{0} \, \int_{D} W(x) dx \, \int_{D} u(y) \, dy  \\ \nonumber
& + & \gamma \, \omega^{2} \, \int_{D} W(x) \int_{D} \left(G_{\omega} - \Gamma_{\omega} \right)(x,y) \, u(y) dy \, dx \\
&+& \gamma \, \omega^{2} \, \rho_{0} \, \sum_{n \geq 2} \frac{\left(i \, \kappa_{0} \right)^{n}}{n!} \, \int_{D} W(x) \,   \, \int_{D}  \vert x - y \vert^{n-1} \, u(y) \, dy \, dx.
\end{eqnarray}
We keep only, on the right hand side, the first term and we estimate the others as an error. For this, we need first an a priori estimation of $\int_{D} W(x) dx$ and $\Vert W \Vert_{L^{2}(D)}$. To do this, we have 
\begin{equation*}
\int_{D} e^{D}_{n}(x) \, dx =\int_{D} 1 \, e^{D}_{n}(x) \, dx=   \int_{D} \left( I - \gamma \, \omega^{2} \, N^{0} \right)(W)(x) \,  e_{n}(x) dx = <W;e^{D}_{n}> \left(1 - \gamma \, \omega^{2} \, \rho_{0} \, \lambda^{D}_{n} \right)
\end{equation*} 
which implies that\,  
%\begin{equation}
$<W;e^{D}_{n}> = <1;e^{D}_{n}> \; \left(1 - \gamma \, \omega^{2} \, \rho_{0} \, \lambda^{D}_{n} \right)^{-1}$\,
%\end{equation}
and then  
\begin{eqnarray}\label{Adz}
\nonumber
\int_{D} W(x) \, dx &=& \sum_{n} <W,e^{D}_{n}> \;  \int_{D} e^{D}_{n}(x) \, dx =  \sum_{n} \frac{\left( \int_{D} e^{D}_{n}(x) \, dx \right)^{2}}{\left( 1 - \gamma \, \omega^{2} \, \rho_{0} \, \lambda^{D}_{n} \right)} \\
&=& \frac{\left( \int_{D} e^{D}_{n_{0}}(x) \, dx \right)^{2}}{\left( 1 - \gamma \, \omega^{2} \, \rho_{0} \, \lambda^{D}_{n_{0}} \right)} + \sum_{n \neq n_{0}} \frac{\left( \int_{D} e^{D}_{n}(x) \, dx \right)^{2}}{\left( 1 - \gamma \, \omega^{2} \, \rho_{0} \, \lambda^{D}_{n} \right)}. 
\end{eqnarray}
Obviously, we have 
\begin{equation*}
\sum_{n \neq n_{0}} \frac{\left( \int_{D} e^{D}_{n}(x) \, dx \right)^{2}}{\left( 1 - \gamma \, \omega^{2} \, \rho_{0} \, \lambda^{D}_{n} \right)} = \mathcal{O}\left( \varepsilon^{3} \right).
\end{equation*}
Then 
\begin{equation}\label{intW}
\int_{D} W(x) \, dx \sim \mathcal{O}\left(\varepsilon^{3-h}\right).
\end{equation}
Similarly,
\begin{equation*}
\Vert W \Vert^{2}_{L^{2}(D)} = \sum_{n} \vert <W,e^{D}_{n}> \vert^{2} =
\frac{\left\vert <1;e^{D}_{n_{0}}> \right\vert^{2}}{\left\vert 1 - \gamma \, \omega^{2} \, \rho_{0} \, \lambda^{D}_{n_{0}} \right\vert^{2}} + \sum_{n \neq n_{0}} \frac{\left\vert <1;e^{D}_{n}> \right\vert^{2}}{\left\vert 1 - \gamma \, \omega^{2} \, \rho_{0} \, \lambda^{D}_{n} \right\vert^{2}}.
\end{equation*}
Then  
\begin{equation}\label{normW}
\Vert W \Vert_{L^{2}(D)} \sim \mathcal{O}\left( \varepsilon^{\frac{3}{2}-h} \right).
\end{equation}
Now, we are ready to estimate the error parts of $(\ref{intu})$. To achieve this, we split it as follows
\begin{enumerate}  
\item[$\ast$] Estimation of $I_{1} :=  \int_{D} W(x) \, \int_{0}^{1} (x-z) \cdot \nabla u^{i}(z+t(x-z)) \, dt \, dx$.\\
By the Cauchy-Schwartz inequality, we have
\begin{equation*}
\vert I_{1}  \vert  \leq  \Vert W \Vert_{L^{2}(D)} \; \left\Vert \int_{0}^{1} (\cdot -z)\cdot \nabla u^{i}(z+t(\cdot -z)) \, dt \right\Vert_{L^{2}(D)} = \mathcal{O}\left( \varepsilon^{4-h} \right).
\end{equation*}
\newline 
\item[$\ast$] Estimation of $I_{2} := \alpha \, \int_{D} W(x) \, S^{\omega}\left( \partial_{\nu} u \right)(x) dx.$\\
By applying the Cauchy-Schwartz inequality and the continuity of the single layer, see for instance the inequality $(\ref{SL})$, we obtain 
\begin{equation*}
\vert I_{2} \vert \leq \alpha \, \Vert W \Vert_{L^{2}(D)} \, \Vert S^{\omega} \left( \partial_{\nu} u \right) \Vert_{L^{2}(D)} \lesssim \varepsilon^{3-h+j} \, \Vert \partial_{\nu} u  \Vert_{L^{2}(\partial D)}
\end{equation*}
and recall that, see $(\ref{norder})$, we have $
\left\Vert \partial_{\nu} u \right\Vert^{2}_{L^{2}(\partial D)}  \lesssim  \; \left\Vert \partial_{\nu} u^{i} \right\Vert^{2}_{L^{2}(\partial D)} + \varepsilon^{-3-2h} \;  \left\Vert u^{i} \right\Vert^{2}_{L^{2}(D)}$ then 
\begin{equation*}
\vert I_{2}  \vert^{2}  \lesssim  \varepsilon^{6+2j-2h} \; \left[ \left\Vert \partial_{\nu} u^{i} \, \right\Vert^{2}_{L^{2}(\partial D)} + \varepsilon^{-3-2h} \;  \left\Vert u^{i} \right\Vert^{2}_{L^{2}(D)} \right].    
\end{equation*}
Since the incident field is smooth we have 
$\left\Vert \partial_{\nu} u^{i} \right\Vert^{2}_{L^{2}(\partial D)} = \mathcal{O}\left( \varepsilon^{2} \right)$ and $ \Vert u^{i} \Vert^{2}_{L^{2}(D)} = \mathcal{O}\left( \varepsilon^{3}\right)$.\\
With this $I_{2} = \mathcal{O}(\varepsilon^{3+j-2h})$. 
\newline 
\item[$\ast$] Estimation of  $I_{3} := \gamma \, \omega^{2} \, \rho_{0} \, i \, \kappa_{0} \, \int_{D} W(x) dx \, \int_{D} u(y) \, dy$.  \\
A straightforward application of $(\ref{intW})$ and the Cauchy-Schwartz inequality allows to deduce 
\begin{equation*}
\vert I_{3} \vert   =  \left\vert \gamma \, \omega^{2} \, \rho_{0}  \, \kappa_{0} \, \int_{D} W dx  \, \int_{D} u \, dy \right\vert \lesssim  \varepsilon^{-2+3-h} \, \Vert 1 \Vert_{L^{2}(D)} \, \Vert u \Vert_{L^{2}(D)} = \varepsilon^{\frac{5}{2}-h} \, \Vert u \Vert_{L^{2}(D)} 
\end{equation*}
then, with help of $(\ref{fdd})$, we obtain 
%\begin{equation}
%\vert I_{3} \vert^{2} \leq \varepsilon^{5-2h} \; \Vert u \Vert^{2}
%\end{equation}
%But recall, see  that
\begin{equation*}
\vert I_{3} \vert^{2} \lesssim \varepsilon^{5-2h} \, \Vert u \Vert^{2}   \lesssim \varepsilon^{5-2h} \, \Big\vert 1 - \gamma \, \omega^{2} \, \rho_{0} \, \lambda^{D}_{n_{0}} \Big\vert^{-2}  \left[  \varepsilon^{3+2j} \, \left\Vert \partial_{\nu} u^{i} \right\Vert^{2}_{L^{2}(\partial D)} + \Vert u^{i} \Vert^{2}_{L^{2}(D)} \right].
\end{equation*}
Recall again that $\left\Vert \partial_{\nu} u^{i} \right\Vert^{2}_{L^{2}(\partial D)} = \mathcal{O}\left( \varepsilon^{2} \right)$ and $ \Vert u^{i} \Vert^{2}_{L^{2}(D)} = \mathcal{O}\left( \varepsilon^{3}\right)$, then we deduce that $I_{3} = \mathcal{O}\left( \varepsilon^{4-2h} \right)$.
%\begin{eqnarray*}
%\vert I_{3} \vert^{2} & \leq & \varepsilon^{5-2h} \; \frac{C^{te}}{\Big\vert 1 - \gamma \, \omega^{2} \, \rho_{0} \, \lambda_{n_{0}} \Big\vert^{2}}  \Bigg[  \varepsilon^{3+2j} \;  \; \Bigg\Vert \frac{\partial \, u^{Inc}}{\partial \nu} \Bigg\Vert^{2}_{L^{2}(\partial D)} + \Vert u^{Inc} \Vert^{2}_{L^{2}(D)}\Bigg] \\ 
%& \leq & \varepsilon^{5-4h} \Big[ \varepsilon^{5+2j} + %\varepsilon^{3} \Big]
%\end{eqnarray*}
%Finally
\newline  
\item[$\ast$] Estimation of $I_{4} := \left\vert \gamma \, \omega^{2} \, \rho_{0} \, \underset{n \geq 2}{\sum} \, \dfrac{\left(i \, \kappa_{0} \right)^{n}}{n!} \, \int_{D} W(x) \, \int_{D}   \vert x - y \vert^{n-1} \, u(y) \, dy \, dx \right\vert.$ \\ 
We have
\begin{equation*} 
\left\vert I_{4} \right\vert  \lesssim  \varepsilon^{-2} \, \sum_{n \geq 2} \frac{\varepsilon^{n-1}}{n!} \, \int_{D} \vert W(x) \vert \, dx \, \int_{D}  \vert u \vert (y) \, dy \leq  \varepsilon \, \Vert W \Vert \, \Vert u \Vert \, \sum_{n \geq 2} \frac{\varepsilon^{n-1}}{n!} = \varepsilon^{\frac{7}{2}-h} \, \Vert u \Vert,  
\end{equation*}
and again, by $(\ref{fdd})$, we obtain  
\begin{equation*}
\left\vert I_{4} \right\vert^{2}  \lesssim   \varepsilon^{7-2h}  \,  \Big\vert 1 - \gamma \, \omega^{2} \, \rho_{0} \, \lambda^{D}_{n_{0}} \Big\vert^{-2}  \left[  \varepsilon^{3+2j} \, \left\Vert \partial_{\nu} u^{i} \right\Vert^{2}_{L^{2}(\partial D)} + \Vert u^{i} \Vert^{2}_{L^{2}(D)} \right] = \varepsilon^{10-4h}. 
\end{equation*}
Finally $I_{4} = \mathcal{O}\left(\varepsilon^{5-2h}\right)$.
\newline
\item[$\ast$] Estimation of $I_{5} := \gamma \, \omega^{2} \, \int_{D} W(x) \, \int_{D} \left(G_{\omega} - \Gamma_{\omega} \right)(x,y) \, u(y) \, dy \, dx.$ \\   
We use Cauchy-Schwartz inequality to obtain 
\begin{equation*}
\vert I_{5} \vert \leq \varepsilon^{-2} \, \Vert W \Vert_{L^{2}(D)} \, \Vert u \Vert_{L^{2}(D)} \, \left[ \int_{D} \int_{D} \vert G_{\omega} - \Gamma_{\omega} \vert^{2}(x,y) \, dy \, dx \right]^{\frac{1}{2}}.
\end{equation*}
We use simultaneously $(\ref{normW})$, $(\ref{normG-G})$ and $(\ref{fdd})$ to get
\begin{equation*}
\vert I_{5} \vert^{2} \leq \varepsilon^{5-2h} \,  \Big\vert 1 - \gamma \, \omega^{2} \, \rho_{0} \, \lambda^{D}_{n_{0}} \Big\vert^{-2}  \left[  \varepsilon^{3+2j} \, \left\Vert \partial_{\nu} u^{i} \right\Vert^{2}_{L^{2}(\partial D)} + \Vert u^{i} \Vert^{2}_{L^{2}(D)} \right] = \varepsilon^{8-4h}
\end{equation*} 
then  $I_{5} = \mathcal{O}\left( \varepsilon^{4-2h} \right)$.
\end{enumerate}
We deduce, from  $(\ref{intu})$, that  
\begin{equation}\label{az}
\int_{D} u(x) \, dx = u^{i}(z) \; \int_{D} W(x) \; dx + \sum_{n=1}^{5} I_{n}  
%& = & u^{Inc}(z) \; \int_{D} W(x) dx +  \varepsilon^{4-h} + \varepsilon^{3+j-2h} + \varepsilon^{4-2h} +  \varepsilon^{5-2h} \\
 =  u^{i}(z) \, \int_{D} W(x) \, dx +  \mathcal{O}\left( \varepsilon^{3-h + \min(1;j)-h} \right). 
\end{equation}
%then 
%\begin{equation}\label{az}
%\int_{D} u(x) \, dx = u^{Inc}(z) \; \int_{D} W(x) dx +  \varepsilon^{3-h + \min(1-h;j-h)} 
%\end{equation}
Remark that the last formula makes sense because $\int_{D} W dx$ is of order $\varepsilon^{3-h}$, see the estimation $(\ref{intW})$, and from the condition $(\ref{hj1})$ we know that $h < \min(1,j)$ or equivalently $\min(1,j) - h >0$. \\
\newline
We know, from $(\ref{eq:ReworkLippmannSchwinger-chap4}$), that the scattered field is given by 
\begin{equation*}
u^{s}(x) = \gamma \; \omega^{2} \; \int_{D} G_\omega(x-y) \, u(y) \, dy - \alpha \; \int_{\partial D} G_\omega(x-y) \; \partial_{\nu}u(y) \, d\sigma(y).
\end{equation*}  
For $x$ away from $D$, we expand $G_\omega$ near $z$ to obtain 
\begin{eqnarray}\label{us}
\nonumber
u^{s}(x) &=& \gamma \; \omega^{2} \; G_\omega(x-z) \int_{D}  u(y) \, dy - \alpha \;G_\omega(x-z) \int_{\partial D} \, \partial_{\nu}u(y) \, d\sigma(y) \\
\nonumber 
& + & \gamma \, \omega^{2} \, \int_{D} \, \int_{0}^{1} \, (y-z)\cdot \underset{y}{\nabla} G_\omega(x-z-t(y-z)) \, dt \; u(y) \, dy \\ 
&-& \alpha \; \int_{\partial D} \int_{0}^{1} \, (y-z)\cdot \underset{y}{\nabla} G_\omega(x-z-t(y-z)) \, dt \; \partial_{\nu}u(y) \, d\sigma(y). 
\end{eqnarray}
We need to estimate the two last terms of the previous equation.
\newline
\begin{enumerate}
\item[$\ast$] Estimation of $ B_{1} := \gamma \, \omega^{2} \, \int_{D} \, \int_{0}^{1} \, (y-z)\cdot \underset{y}{\nabla} G_\omega(x-z-t(y-z)) \, dt \; u(y) \, dy.$ \\
We have
\begin{eqnarray*}
\left\vert B_{1} \right\vert & \leq & \varepsilon^{-2} \, \Vert u \Vert_{L^{2}(D)} \; \left[ \int_{D} \,\left\vert \int_{0}^{1} \, (y-z)\cdot \underset{y}{\nabla} G_\omega(x-z-t(y-z)) \, dt \right\vert^{2} \, dy \right]^{\frac{1}{2}} \\ 
& \leq & \varepsilon^{-2} \, \Vert u \Vert_{L^{2}(D)} \; \left[ \int_{D}  \, \vert y-z \vert^{2}  \, dy \right]^{\frac{1}{2}} \lesssim \varepsilon^{-2} \; \varepsilon^{\frac{3}{2}-h} \; \varepsilon^{\frac{5}{2}} = \mathcal{O}\left(\varepsilon^{2-h} \right).
\end{eqnarray*}
\newline
\item[$\ast$] Estimation of $B_{2} :=   \alpha \; \int_{\partial D} \int_{0}^{1} \, (y-z)\cdot \underset{y}{\nabla} G_\omega(x-z-t(y-z)) \, dt \, \partial_{\nu}u(y) \, d\sigma(y).$\\
We have
\begin{eqnarray*}
\left\vert B_{2} \right\vert & \leq & \varepsilon^{j} \; \left\Vert \partial_{\nu} u \right\Vert_{L^{2}(\partial D)} \; \left[\int_{\partial D} \left\vert \int_{0}^{1} \, (y-z)\cdot \underset{y}{\nabla} G_\omega(x-z-t(y-z)) \, dt\right\vert^{2} d\sigma(y) \right]^{\frac{1}{2}} \\ 
& \leq & \varepsilon^{j} \; \left\Vert \partial_{\nu} u \right\Vert_{L^{2}(\partial D)} \; \left[\int_{\partial D} \, \vert y-z \vert^{2} d\sigma(y) \right]^{\frac{1}{2}} \lesssim \varepsilon^{j} \; \varepsilon^{-h} \; \varepsilon^{2} =  \mathcal{O}\left( \varepsilon^{2+j-h} \right).
\end{eqnarray*} 
\end{enumerate} 
Taking into account the estimation of $B_{1}$ and $B_{2}$ we rewrite  the formula $(\ref{us})$ as  
\begin{eqnarray*}\label{usa}
\nonumber
u^{s}(x) &=& \gamma \; \omega^{2} \; G_\omega(x-z) \int_{D}  u(y) \, dy - \alpha \;G_\omega(x-z) \int_{\partial D} \partial_{\nu}u(y) \, d\sigma(y) + \mathcal{O}\left( \varepsilon^{2-h} \right) \\
&\stackrel{\ref{intDint}}{=}& \beta \; \omega^{2} \; G_\omega(x-z) \int_{D}  u(y) \, dy + \mathcal{O}\left( \varepsilon^{2-h} \right).
\end{eqnarray*}
%We have seen that 
%\begin{equation}\label{usb}
%\frac{- \omega^{2} \, \rho_{1}}{k_{1}} \; \int_{D} u = \int_{\partial D} \frac{\partial u}{\partial \nu}.
%\end{equation}
%Put $(\ref{usb})$ in $(\ref{usa})$ to obtain 
Now, we use the expression of $\int_{D} u (x) dx$ given in the formula $(\ref{az})$ to deduce that
\begin{eqnarray*}
u^{s}(x) &=& \beta \; \omega^{2} \; G_\omega(x-z) \left[ u^{i}(z) \; \int_{D} W(x) dx + \mathcal{O}\left( \varepsilon^{3-2h + \min(1;j)} \right) \right] + \mathcal{O}\left( \varepsilon^{2-h} \right) \\
 &=& \beta \; \omega^{2} \; G_\omega(x-z) \; u^{i}(z) \; \int_{D} W(x) dx + \mathcal{O}\left( \varepsilon^{1-2h + \min(1;j)} \right).  
\end{eqnarray*}
Plugging the estimation of $\int_{D} W(x) \, dx$, given in formula $(\ref{Adz})$, we obtain  
\begin{eqnarray*}
u^{s}(x) &=& \beta \; \omega^{2} \; G_\omega(x-z) \; u^{i}(z) \; \left[\frac{\left( \int_{D} e^{D}_{n_{0}}(x) \, dx \right)^{2}}{\left(1 - \gamma \, \omega^{2} \, \rho_{0} \, \lambda^{D}_{n_{0}} \right)} + \mathcal{O}\left( \varepsilon^{3} \right) \right] + \mathcal{O}\left( \varepsilon^{1-2h + \min(1;j)} \right).
\end{eqnarray*}
Finally, recalling the value of $\beta := k_{1}^{-1} - k_{0}^{-1}$, we have
\begin{eqnarray*}
u^{s}(x) &=& \frac{1}{k_1} \; \omega^{2} \; G_\omega(x-z) \; u^{i}(z) \; \frac{\left( \int_{D} e^{D}_{n_{0}}(x) \, dx \right)^{2}}{\left(1 - \gamma \, \omega^{2} \, \rho_{0} \, \lambda^{D}_{n_{0}} \right)} +  \mathcal{O}\left( \varepsilon + \varepsilon^{1-2h + \min(1;j)} \right)\\
&\stackrel{\ref{defwn0}}{=}& \frac{-1}{k_1} \; \; G_\omega(x-z) \; u^{i}(z) \; \frac{\omega^{2}_{n_{0}} \, \omega^{2} }{\left(\omega^{2}  - \omega^{2}_{n_{0}} \right)} \left( \int_{D} e^{D}_{n_{0}}(x) \, dx \right)^{2} +  \mathcal{O}\left( \varepsilon + \frac{\varepsilon^{1+\min(1;j)}}{\left(\omega^{2}  - \omega^{2}_{n_{0}} \right)^{2}} \right).
\end{eqnarray*}
We recall that $k_{1} := \overline{k}_{1} \; \varepsilon^{2}$, $\int_{D} e^{D}_{n}(x) \, dx = \varepsilon^{3} \int_{B} e^{B}_{n}(x) \, dx$ and rewrite the last equation as 
\begin{equation*}
u^{s}(x) = \frac{-1}{\overline{k}_1} \;G_\omega(x-z) \; u^{i}(z) \, \varepsilon \; \frac{\omega^{2}_{n_{0}} \, \omega^{2} }{\left(\omega^{2}  - \omega^{2}_{n_{0}} \right)} \left( \int_{B} e^{B}_{n_{0}}(x) \, dx \right)^{2} +  \mathcal{O}\left( \varepsilon + \frac{\varepsilon^{1+\min(1;j)}}{\left(\omega^{2}  - \omega^{2}_{n_{0}} \right)^{2}} \right).
\end{equation*}
Then same remark as $(\ref{v=uinctecase})$ holds for this case 
and justify the equations $(\ref{Approximation-second-regime-scattered-field})$, $(\ref{Approximation-second-regime-farfield})$ and Theorem \ref{Theorem-Using-Newtonian-Resonances}. 
%\begin{remark}
%In the general case, when $\rho_{0}$ and $k_{0}$ are variable coefficients 
%\begin{equation}
%u^{s}(x, \theta, \omega) = v^s(x, \theta, \omega)-\frac{1}{\overline{k}_{1}} \; \frac{\omega^{2} \omega_{n_0}^{2}}{(\omega^2 -  \omega_{n_0}^{2})} \displaystyle\Big( \int_{B} e^B_{n_{0}} \Big)^{2}\, \varepsilon\, G_{\omega}(x;z) \; v(z, \theta, \omega) + \mathcal{O}\left(\varepsilon + \frac{\varepsilon^{1+\min(1;j)}}{\left( \omega^{2} - \omega_{n_{0}}^{2} \right)^{2}} \right),
%\end{equation}
%uniformly for $x$ in a bounded domain away from $D$ and $\theta$ in the unit sphere.

%\end{remark}
%%%%%%%%%%%%%%%%%%%%%%%%%%%%
%%%%%%%%%%%%%%%%%%%%%%%%%
\section{appendix}\label{lemmaproof}
This section is devoted to justify Lemma \ref{lemma1} and Lemma \ref{lemma-B}. \\
 
Let $\Gamma_\omega(x,y) :=  \rho_0(z)\dfrac{e^{i  \kappa_{0}  |x-y|}}{4 \pi   |x-y|}$ be the fundamental solution of the equation (\ref{eq:defFundSol}) with constant coefficients $\rho_0(z)$ and $k_0$ satisfying the radiation conditions at infinity.
By expanding in $|x-y|$ we have
\begin{equation*}\label{gradGw-gradG0}
\underset{y}{\nabla} \Gamma_\omega(x,y) = \underset{y}{\nabla} \Gamma_{0}(x,y) - \rho_0(z)\; (y-x) \left( \frac{ \kappa_{0}^2}{8 \pi |x-y|} + \frac{i   \kappa_{0}^3 }{12 \pi} + O(|x-y|) \right).
\end{equation*}
Then, from $(\ref{eq:defFundSol})$ and since $ \underset{x}{\Delta} \Gamma_\omega(x,y) + \kappa_0^2 \, \Gamma_\omega(x,y) = -  \rho_0(z) \, \delta_y(x)$, integrating $G_\omega(x,y) - \Gamma_\omega(x,y)$ against the Dirac delta $-\delta_y(x)$, we have for any ball $B_R$ of large radius
\begin{eqnarray}\label{eq:Go-G0_int_repr}
\nonumber
\left( G_\omega - \Gamma_\omega \right)(x,y) &=& - \int_{B_R} \left(\frac{1}{\rho_0(t)} - \frac{1}{\rho_0(y)} \right) \nabla G_\omega(t,x) \cdot \nabla \Gamma_\omega (t,y) d t \\
&+& \omega^2 \int_{B_R} \left(\frac{1}{k(t)} - \frac{1}{k(y)} \right) G_\omega(x,t) \Gamma_\omega(t,y) dt  + \int_{\partial B_R} \left(\frac{1}{\rho_0(t)} - \frac{1}{\rho_0(y)} \right) \partial_\nu G_\omega (x,y) \Gamma_\omega(t,y) dt. 
\end{eqnarray}
The first integral in $(\ref{eq:Go-G0_int_repr})$ is $\mathcal{O}\left( \log |x-y| \right)$ because
\begin{equation*}
\int_{B_R} \left(\frac{1}{\rho_0(t)} - \frac{1}{\rho_0(y)} \right) \nabla G_\omega(t,x) \cdot \nabla \Gamma_\omega (t,y) d t = \int_{B_R} |t-y| \dfrac{1}{|t-x|^2}  \dfrac{1}{|t-y|^2} dt + r(t,y),
\end{equation*} 
where $r$ is a bounded function;
the second integral is bounded due to the smoothness of $k$; and the last integral is also bounded if we take $R$ large enough.
Thus by the divergence theorem and the properties of the fundamental solutions $G_\omega$ and $\Gamma_{\omega}$ we have 
\begin{eqnarray}\label{ha}
\nonumber
\int_D \nabla \cdot \Big( \rho(y)^{-1} \nabla G_\omega(x,y) &-& \rho(z)^{-1} \nabla \Gamma_\omega(x,y) \Big) dy = \omega^2 \int_D \left( k(y)^{-1} G_\omega(x,y) dy -  k(z)^{-1} \Gamma(x,y)\right) dy \\
&=& \omega^2 \int_D (k(y)^{-1} - k(z)^{-1}) G_\omega(x,y) dy +\omega^2  \int_D k(z)^{-1} \left(G_\omega - \Gamma_\omega \right)(x,y) dy = \mathcal{O}(\varepsilon^3) .
\end{eqnarray}
%Therefore, with $(\ref{gradGw-gradG0})$ and $(\ref{ha})$, we can expand the Neumann-Poincar\'e operator associated to the variable background as
%\begin{eqnarray*}
%K_D^{\omega}[1](x) &=& \rho_{0}^{-1}(z) \, \int_{\partial D} \nu(y) \cdot   \underset{y}{\nabla} \Gamma_{0}(x,y) \, d \sigma(y) - \frac{\kappa_{0}^{2}}{2} \, \int_{\partial D} \frac{\nu(y) \cdot (y-x)}{4 \pi \vert x-y \vert} \, d \sigma(y) + \mathcal{O}\left( \varepsilon^{3} \right) \\
 %&=& -\frac{1}{2} - \frac{\omega^2}{2 \, \rho_{0}}  A(x) + \mathcal{O}(\varepsilon^{3}).
%\end{eqnarray*}
\
\newline

In order to prove Lemma \ref{lemma1} and Lemma \ref{lemma-B}, we start by recalling the definition of the  operator $\large{\text{J}}_{\omega}$
\begin{eqnarray*}
\large{\text{J}}_{\omega} := L^{2}(\partial D) & \rightarrow & L^{2}(\partial D) \\ 
f  & \rightarrow & \large{\text{J}}_{\omega}(f)(x) :=  \int_{\partial D} \rho_{0}^{-1}(y) \, \frac{\partial G_{\omega}}{\partial \nu(y)}(x-y) \, f(y) \, d\sigma(y).
\end{eqnarray*}

Let $<\cdot;\cdot>$ the $L^{2}(\partial D)$ inner product and let $f$ and $g$ two functions on $L^{2}(\partial D)$. We have
\begin{eqnarray*}
<\large{\text{J}}_{\omega}(f);g> &:=& \int_{\partial D} \large{\text{J}}_{\omega}(f)(x) \, g(x) \, d\sigma(x) = \int_{\partial D} g(x) \int_{\partial D} \rho_{0}^{-1}(y) \, \partial_{\nu(y)} G_{\omega}(x-y) \, f(y) \, d\sigma(y) \, d\sigma(x)\\ 
&=& \int_{\partial D} f(x) \rho_{0}^{-1}(x)\int_{\partial D} g(y) \partial_{\nu(x)} G_{\omega}(x,y)  d\sigma(y) d\sigma(x) \\ &=& \int_{\partial D} f(x) \rho_{0}^{-1}(x)\left( K_{D}^{\omega}\right)^{*}\left(g\right)(x)d\sigma(x) = <f;\rho_{0}^{-1}\left( K_{D}^{\omega}\right)^{*}\left(g\right)>. 
\end{eqnarray*}
This proves that $\large{\text{J}}_{\omega}^{\; *}(f)(\cdot) = \rho_{0}^{-1}(\cdot) \, \left(K_{D}^{w}\right)^{*}(f)(\cdot)$.\\
Next, by definition, we have
\begin{equation*}
\large{\text{J}}_{0}(1)(x) := \left(K_{D}^{0}\right)(\rho_{0}^{-1})(x)  = \int_{\partial D} \partial_{\nu(y)} G_{0}(x,y) \, \rho_{0}^{-1}(y) \, d\sigma(y) = \int_{D} \underset{y}{\nabla} \cdot \left( \rho_{0}^{-1}(y) \, \underset{y}{\nabla} G_{0}(x,y) \right) \, dy.
\end{equation*} 
By a standard argument, i.e. isolating the singularity, we deduce that $\large{\text{J}}_{0}(1) = -1/2$.
\bigskip
 
Let us now estimate the variation $\left( \large{\text{J}}_{\omega} - \large{\text{J}}_{0} \right)(1)(x)$.
We have
\begin{eqnarray*}
\left( \large{\text{J}}_{\omega} - \large{\text{J}}_{0} \right)(1)(x) &=& \int_{D} \underset{y}{\nabla} \cdot \left( \rho_{0}^{-1}(y) \underset{y}{\nabla} (G_{\omega} - G_{0})(x,y)\right) \, dy \\
&=& - \rho_{0}^{-1}(z) \, \kappa_{0}^{2} \, \int_{D} \Gamma_{0}(x,y) \, dy - \rho_{0}^{-1}(z) \, \kappa_{0}^{2} \, \int_{D} \left( \Gamma_{\omega} - \Gamma_{0} \right)(x,y) \, dy  \\
&+& \int_{D} \underset{y}{\nabla} \cdot \left( \rho_{0}^{-1}(y) \underset{y}{\nabla} G_{0} (x,y) - \rho_{0}^{-1}(z)  \underset{y}{\nabla}\Gamma_{0}(x,y)\right) \, dy  \\
&+&  \int_{D} \underset{y}{\nabla} \cdot \left( \rho_{0}^{-1}(y) \underset{y}{\nabla} G_{\omega} (x,y) - \rho_{0}^{-1}(z)  \underset{y}{\nabla}\Gamma_{\omega}(x,y)\right) \, dy. 
\end{eqnarray*} 
Since $\vert x - y \vert$ is small, clearly, we have
%\begin{equation*}
$\rho_{0}^{-1}(z) \, \kappa_{0}^{2} \, \int_{D} \left( \Gamma_{\omega} - \Gamma_{0} \right)(x,y) \, dy = \mathcal{O}\left( \varepsilon^{3} \right)$
%\end{equation*}
and from $(\ref{ha})$, we deduce that $\int_{D} \underset{y}{\nabla} \cdot \left( \rho_{0}^{-1}(y) \underset{y}{\nabla} G_{\omega} (x,y) - \rho_{0}^{-1}(z)  \underset{y}{\nabla}\Gamma_{\omega}(x,y)\right) \,dy $ and $\int_{D} \underset{y}{\nabla} \cdot \left( \rho_{0}^{-1}(y) \underset{y}{\nabla} G_{0} (x,y) - \rho_{0}^{-1}(z)  \underset{y}{\nabla}\Gamma_{0}(x,y)\right) \, dy$ behave as  $\varepsilon^{3}$. Then 
\begin{equation*}
\left( \large{\text{J}}_{\omega} - \large{\text{J}}_{0} \right)(1)(x) = - \rho_{0}^{-1}(z) \, \kappa_{0}^{2} \, \int_{D} \Gamma_{0}(x,y) \, dy  + \mathcal{O}\left( \varepsilon^{3} \right)
\end{equation*}
moreover
\begin{equation*}
\rho_{0}^{-1}(z) \, \Gamma_{0}(x,y) = \frac{1}{4 \pi \vert x-y \vert} = \frac{-1}{2} \underset{y}{\nabla} \cdot \left( \frac{(x-y)}{4\pi \, \vert x-y \vert} \right)
\end{equation*} 
then a simple integration ends the proof of Lemma \ref{lemma1}.
\bigskip

Let us move to the proof of Lemma \ref{lemma-B}.
We investigate first the invertibility of $\left(\lambda + \frac{1}{2} \right) I+\large{\text{J}}_{0}$.\\ 
Let $f \in L^{2}(\partial D)$
such that $f \neq 0$ and
$\left(\lambda + \frac{1}{2} \, I + \large{\text{J}}_{0}^{\; *} \right)(f)=0$, then we have  
\begin{equation*}
0  =  \int_{\partial D} \left( (\lambda + \frac{1}{2}) \, I + \large{\text{J}}_{0}^{\; *} \right)(f) \, 1 \, d \sigma = \int_{\partial D} f \, \left( (\lambda + \frac{1}{2}) \, I + \large{\text{J}}_{0} \right)(1) \, d \sigma = \lambda \int_{\partial D} f \, d \sigma.
\end{equation*}
With standard argument, see for instance \cite{Habib-book}, we show that $\left(\lambda + \frac{1}{2} \right) I + \large{\text{J}}_{0}$ is invertible in $L^{2}(\partial D)$ with $\left\Vert \left( \left(\lambda + \frac{1}{2} \right) I + \large{\text{J}}_{0} \right)^{-1} \right\Vert_{\mathcal{L}\left(L^{2}(\partial D)\right)} = \mathcal{O}\left( 1/\lambda \right) $ and $ \left\Vert \left( \left(\lambda + \frac{1}{2} \right) I + \large{\text{J}}_{0} \right)^{-1} \right\Vert_{\mathcal{L}\left(L^{2}_{0}(\partial D)\right)} = \mathcal{O}\left( 1 \right), \; \, \text{uniformly on }   \lambda$.\\ 
Next, we investigate the invertibility of $\left(\lambda + \frac{1}{2} \right) I + \large{\text{J}}_{\omega}$ in $L^{2}(\partial D)$. For this, we need to compute $\left\Vert \large{\text{J}}_{\omega} - \large{\text{J}}_{0}\right\Vert_{\mathcal{L}}$.\\
We have 
\begin{eqnarray}\label{jwj0eps2}
\nonumber
\left\Vert \large{\text{J}}_{\omega} - \large{\text{J}}_{0}\right\Vert_{\mathcal{L}\left(L^{2}(\partial D)\right)} & = & \left\Vert \left( \large{\text{J}}_{\omega} - \large{\text{J}}_{0} \right)^{*}   \right\Vert_{\mathcal{L}\left(L^{2}(\partial D)\right)} := \underset{\Vert f \Vert = 1}{\sup} \left\Vert \left( \large{\text{J}}_{\omega} - \large{\text{J}}_{0} \right)^{*}\left( f \right)   \right\Vert = \underset{\Vert f \Vert = 1}{\sup} \left\Vert \rho_{0}^{-1} \, \left(K_{D}^{w} - K_{D}^{0} \right)^{*}(f) \right\Vert \\
& \lesssim & \underset{\Vert f \Vert = 1}{\sup} \left\Vert \left(K_{D}^{w} - K_{D}^{0} \right)^{*}(f) \right\Vert \stackrel{\ref{KwK0}}{\simeq} \underset{\Vert f \Vert = 1}{\sup} \left\Vert \int_{\partial D} \frac{\nu(\cdot) \cdot (\cdot - y)}{\vert \cdot - y \vert} \, f(y) \, d\sigma(y) \right\Vert = \mathcal{O}\left( \varepsilon^{2} \right).
\end{eqnarray}
Then 
\begin{eqnarray}\label{invneumann}
\nonumber
\left(\lambda + \frac{1}{2} \right) I + \large{\text{J}}_{\omega} &=& \left(\lambda + \frac{1}{2} \right) I + \large{\text{J}}_{0} -  \large{\text{J}}_{0} + \large{\text{J}}_{\omega} \\ \nonumber 
\left(\lambda + \frac{1}{2} \right) I + \large{\text{J}}_{\omega} & = & \left( \left(\lambda + \frac{1}{2} \right) I + \large{\text{J}}_{0} \right) \left[I - \left( \left(\lambda + \frac{1}{2} \right) I + \large{\text{J}}_{0} \right)^{-1} \left( \large{\text{J}}_{0} - \large{\text{J}}_{\omega} \right) \right] \\ 
\left( \left(\lambda + \frac{1}{2} \right) I + \large{\text{J}}_{\omega} \right)^{-1} 
& = & \left[I - \left( \left(\lambda + \frac{1}{2} \right) I + \large{\text{J}}_{0} \right)^{-1} \left( \large{\text{J}}_{0} - \large{\text{J}}_{\omega} \right) \right]^{-1} \left( \left(\lambda + \frac{1}{2} \right) I + \large{\text{J}}_{0} \right)^{-1},  
\end{eqnarray}
we know that $\left( \left(\lambda + \frac{1}{2} \right) I + \large{\text{J}}_{0} \right)^{-1}$ exists, then it suffices to prove that the first operator on the right hand side exists also. Using $(\ref{jwj0eps2})$ and assuming that $\mathbf{\bm{\lambda}^{-1} \, \bm{\varepsilon}^{2} <1}$, we have     
\begin{equation*}
\left\Vert \left( \left(\lambda + \frac{1}{2} \right) I + \large{\text{J}}_{0} \right)^{-1} \left( \large{\text{J}}_{0} - \large{\text{J}}_{\omega} \right) \right\Vert  \lesssim  \frac{1}{\dist\left(\lambda + \frac{1}{2} ; \sigma\left(\large{\text{J}}_{0}\right) \right)} \; \varepsilon^{2} \leq \lambda^{-1} \, \varepsilon^{2} < 1,
\end{equation*}
then by the \emph{Neumann} series representation for the inverse  operator we deduce that the first operator on the right hand side   of $(\ref{invneumann})$ exists and consequently $\left( \left(\lambda + \frac{1}{2} \right) I + \large{\text{J}}_{\omega} \right)^{-1}$ is well defined. Again, by $(\ref{invneumann})$ we deduce that  
\begin{equation}\label{lambda+1/2}
\left\Vert \left( \left(\lambda + \frac{1}{2} \right) I + \large{\text{J}}_{\omega} \right)^{-1} \right\Vert_{\mathcal{L}\left(L^{2}(\partial D)\right)} \leq \lambda^{-1}.  
\end{equation}
Similar arguments allow to obtain the following estimation as well 
\begin{equation*}
\left\Vert \left( \left(\lambda + \frac{1}{2} \right) I + \large{\text{J}}_{\omega} \right)^{-1} \right\Vert_{\mathcal{L}\left(L^{2}_{0}(\partial D)\right)} = \mathcal{O}\left( 1 \right), \; \, \text{uniformly on }   \lambda.  
\end{equation*}
From $\large{\text{J}}_{\omega}$ we construct the operator $B$ defined from $L^{2}(\partial D)$ to  $L^{2}(\partial D)$ as   
\begin{equation*}
\forall \, f \in L^{2}(\partial D), \; x \in \partial D,\;\;   B(f)(x) :=  \left[ \left( \rho_{0}^{-1}(x) \, \alpha^{-1}(z) + \frac{1}{2}  \right) I + \large{\text{J}}_{\omega}^{\; *} \right] (f)(x)
\end{equation*}
where we recall that $\alpha(z) := \rho_{1}^{-1} - \rho_{0}^{-1}(z)$ with $ \rho_{1} = 
\overline{\rho}_{1} \varepsilon^{2}$. 
\bigskip

We have  
\begin{eqnarray}\label{operatorB}
\nonumber
B &=&  \left[ \left( \rho_{0}^{-1}(z) \, \alpha^{-1}(z) + \frac{1}{2}  \right) I + \large{\text{J}}_{\omega}^{\; *} \right]  + \alpha^{-1}(z) \, \int_{0}^{1}(\cdot - z)\cdot \nabla \rho_{0}^{-1}(z+t(\cdot -z))dt\, I  \\ \nonumber
 &=&  \left[ \left( \rho_{0}^{-1}(z) \, \alpha^{-1}(z) + \frac{1}{2}  \right) I + \large{\text{J}}_{\omega}^{\; *} \right] \Bigg[ I \\ \nonumber && \qquad + \quad  \alpha^{-1}(z) \,\left[ \left( \rho_{0}^{-1}(z) \, \alpha^{-1}(z) + \frac{1}{2}  \right) I + \large{\text{J}}_{\omega}^{\; *} \right]^{-1} \, \int_{0}^{1}(\cdot -z)\cdot \nabla \rho_{0}^{-1}(z+t(\cdot -z))dt\, I  \Bigg]  \\ \nonumber
\mbox{ Then }\\
B^{-1} &=& \left[ I + \alpha^{-1}(z) \,\left[ \left( \rho_{0}^{-1}(z) \, \alpha^{-1}(z) + \frac{1}{2}  \right) I + \large{\text{J}}_{\omega}^{\; *} \right]^{-1} \, \int_{0}^{1}(\cdot -z)\cdot \nabla \rho_{0}^{-1}(z+t(\cdot -z))dt\, I  \right]^{-1}\\ \nonumber && \left[ \left( \rho_{0}^{-1}(z) \, \alpha^{-1}(z) + \frac{1}{2}  \right) I + \large{\text{J}}_{\omega}^{\; *} \right]^{-1}.  
\end{eqnarray}
We know that $\left[ \left( \rho_{0}^{-1}(z) \, \alpha^{-1}(z) + \frac{1}{2}  \right) I + \large{\text{J}}_{\omega}^{\; *} \right]^{-1}$ exists if $\left(\rho_{0}^{-1}(z) \, \alpha^{-1}(z)\right)^{-1} \varepsilon^{2} < 1$ or equivalently if $\rho_{0}(z) < (1 + \varepsilon^{2}) \, \overline{\rho}_{1}$, but recall that we have assumed $\overline{\rho}_{1}$ large enough such that the previous condition is satisfied. Next, it is sufficient to prove that     
\begin{equation*}
\zeta := \left\Vert \alpha^{-1}(z) \,\left[ \left( \rho_{0}^{-1}(z) \, \alpha^{-1}(z) + \frac{1}{2}  \right) I + \large{\text{J}}_{\omega}^{\; *} \right]^{-1} \, \int_{0}^{1}(\cdot -z)\cdot \nabla \rho_{0}^{-1}(z+t(\cdot -z))dt\, I \right\Vert_{L^{2}(\partial D)} 
\end{equation*}
is less than $1$. For this, thanks to $(\ref{lambda+1/2})$, we can prove that $\zeta =\mathcal{O}\left( \varepsilon \right)$. Finally, $B$ is invertible.  \\ 
From $(\ref{operatorB})$, we have 
\begin{equation*}
\Vert B^{-1} \Vert_{L^{2}(\partial D)} \lesssim \frac{1}{1 - \zeta }  \; \left\Vert \left[ \left( \rho_{0}^{-1}(z) \, \alpha^{-1}(z) + \frac{1}{2}  \right) I + \large{\text{J}}_{\omega}^{\; *} \right]^{-1} \right\Vert_{L^{2}(\partial D)} \simeq \, \alpha(z).
\end{equation*}
With the same arguments we can prove that $\Vert B^{-1} \Vert_{L^{2}_{0}(\partial D)} = \mathcal{O}(1)$.

%\newpage

%Set $\lambda$ to be $\lambda := \alpha^{-1}(z) \; \underset{\Omega}{\min} \left( \rho_{0}^{-1}(x)\right) $, 

%\newpage
%==========================================================
%==========================================================
%========================================================== 

\end{document}